 \numberwithin{equation}{section}
\newtheorem{theorem}{Theorem}[section]
\newtheorem{corollary}[theorem]{Corollary}   
\newtheorem{lemma}[theorem]{Lemma}   
\newtheorem{definition}[theorem]{Definition}
\newtheorem{remark}[theorem]{Remark}
\newtheorem*{quest*}{Question}
\newtheorem*{thma}{Theorem A}
\newtheorem*{thmb}{Theorem B}
\newcommand{\be}{\begin{equation}}
\newcommand{\ee}{\end{equation}}
\newcommand{\bee}{\begin{equation*}}
\newcommand{\eee}{\end{equation*}}
\newcommand{\R}{{\mathbb{R}}}
\newcommand{\Z}{{\mathbb{Z}}}
\newcommand{\N}{{\mathbb{N}}}
\newcommand{\One}{{\bf \rm{1}}}
\newcommand{\dist}{\operatorname{dist}}
\newcommand{\diam}{\operatorname{diam}}
\newcommand{\vol}{\operatorname{Vol}}
\newcommand{\set}{\rm{set}}
\newcommand{\Lip}{\operatorname{Lip}}
\newcommand{\mass}{{\mathbf M}}
\newcommand{\curr}{{\mathbf M}}         
\newcommand{\intrectcurr}{{\mathcal I}} 
\newcommand{\intcurr}{{\mathbf I}}      
\newcommand{\rstr}{\:\mbox{\rule{0.1ex}{1.2ex}\rule{1.1ex}{0.1ex}}\:}
\newcommand{\bdry}{\partial}
\def\bdm{\begin{displaymath}}
\def\edm{\end{displaymath}}
\theoremstyle{definition}
\newcommand{\func}[1]{\operatorname{#1}}
\newtheorem*{acknowledge}{Acknowledgments}
\begin{document}

\title{Alexandrov Spaces with Integral Current Structure}
\author{M. Jaramillo}
\address[Jaramillo]{Department of Mathematics, University of Connecticut, }
\email{maree.jaramillo@uconn.edu}
\urladdr{}
\author{R. Perales}
\address[Perales]{Conacyt Research Fellow. Instituto de Matem\'aticas, Universidad Nacional Aut\'onoma de M\'exico\\
Oaxaca, M\'exico.}
\email{raquel.perales@matem.unam.mx}
\urladdr{}
\author{P. Rajan}
\address[Rajan]{Department of Mathematics, University of California at Riverside}
\email{rajan@math.ucr.edu}
\urladdr{https://sites.google.com/site/priyankarrajangeometry/home}
\author{C. Searle}
\address[Searle]{Department of Mathematics, Statistics, and Physics\\
 Wichita State University, Wichita, Kansas}
\email{searle@math.wichita.edu}
\urladdr{https://sites.google.com/site/catherinesearle1/home}
\author{A. Siffert}
\address[Siffert]{Max Planck Institute for Mathematics, Vivatsgasse 7, 53111 Bonn, Germany}
\email{siffert@mpim-bonn.mpg.de}
\urladdr{}

\begin{abstract} 
We endow each closed, orientable  Alexandrov space $(X,d)$  with an integral current $T$ of weight equal to 1, $\partial T = 0$ and $\set(T)=X$, in other words, we prove that
$(X,d,T)$ is an integral current space with no boundary.
Combining this result with a result of Li and Perales, we show that non-collapsing sequences of these spaces with uniform lower curvature and diameter bounds admit subsequences whose Gromov-Hausdorff and intrinsic flat  limits  agree.
\end{abstract}
\maketitle

\section{Introduction}

There exist a wealth of notions for convergence of sequences of Riemannian manifolds, among them are $C^{k,\alpha}$-convergence (see e.g. \cite{Pe}) and Gromov-Hausdorff convergence of metric spaces \cite{G}. 
More recently, Sormani and Wenger \cite{SW} introduced another notion of convergence, the {\em intrinsic flat convergence} of integral current spaces. 
This notion is based on the flat distance between integral currents in Euclidean space developed by Federer and Fleming in \cite{FF}, which was subsequently extended to metric spaces by Ambrosio and Kirchheim \cite{AK}.  In \cite{SW} Sormani and Wenger motivate the introduction of intrinsic flat convergence by the following example:
spheres with many splines that contain increasingly small amounts of volume converge in the intrinsic flat sense to spheres but not in the Gromov-Hausdorff sense.
There are numerous interesting applications of intrinsic flat convergence, several of them to General Relativity (see e.g. \cite{HLS}, \cite{J}, \cite{LS1}, \cite{LS2}).

\smallskip

The study of Alexandrov spaces  has been largely motivated by the fact that they are
 a generalization of Riemannian manifolds with curvature bounded from below.  
 The relationship between Riemannian and Alexandrov geometry has been used
repeatedly to solve difficult problems in Riemannian geometry: for example,
 Perelman's solution to Thurston's Geometrization
conjecture for $3$-manifolds, where  
a structure
theorem for $3$-manifolds that collapse with a uniform lower curvature bound
played a crucial role 
(see \cite{P1}, \cite{P2}, \cite{P3}). 

Examples of Alexandrov spaces are limits of Gromov-Hausdorff sequences of Riemannian manifolds with sectional curvature bounded from below,
as well as all quotients of Riemannian manifolds with a lower curvature bound under isometric group actions.  
It is a longstanding conjecture that not all Alexandrov spaces belong to the closure of the space of Riemannian manifolds with a lower curvature bound (see e.g. \cite{K2}, \cite{PWZ}).
However, since smooth oriented Riemannian manifolds of finite volume can be seen as integral current spaces, it is then natural to ask:
$$\text{\textit{Which Alexandrov spaces can be endowed with an integral current structure?}}$$

In this paper we begin the study of this question by considering Alexandrov spaces without boundary. We will prove the following theorem:
\begin{thma}\label{t:1}
Let $(X,d)$ be a closed, oriented, $n$-dimensional Alexandrov space with curvature bounded below by $\kappa$.  There exists an integral current structure $T$ with weight equal to 1 defined on $X$ such that
$(X, d, T)$ is an $n$-dimensional integral current space.
\end{thma}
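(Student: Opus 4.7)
The plan is to construct $T$ by integration over the regular part $X_{\mathrm{reg}}$ of $X$, exploiting the fact that this set is a bi-Lipschitz manifold carrying the full $n$-dimensional Hausdorff measure $\mathcal{H}^n$. In the Ambrosio--Kirchheim framework it suffices to produce a countably $\mathcal{H}^n$-rectifiable set $R\subset X$, a Borel orientation of its approximate tangent spaces, and a positive integer-valued weight; the current $T$ is then recovered by integration against this data. The natural candidates are $R = X_{\mathrm{reg}}$ with weight $\theta\equiv 1$ and the orientation supplied by the hypothesis.

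Concretely, I would invoke the structure theory of Burago--Gromov--Perelman, Otsu--Shioya, and Perelman, which yields that $X_{\mathrm{reg}}$ is open, dense, of full $\mathcal{H}^n$-measure, and admits a countable atlas of bi-Lipschitz charts $\varphi_i : V_i \subset \mathbb{R}^n \to U_i \subset X_{\mathrm{reg}}$. By orientability of $(X,d)$, these charts may be chosen compatibly so that transition maps are orientation-preserving almost everywhere. Partitioning $X_{\mathrm{reg}}$ into pairwise disjoint Borel sets $E_i \subset U_i$, I would define
\[
T \;=\; \sum_i (\varphi_i)_\# [\![ \varphi_i^{-1}(E_i) ]\!],
\]
where $[\![\,\cdot\,]\!]$ denotes the canonical integer rectifiable current of multiplicity one associated to a Borel subset of $\mathbb{R}^n$. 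The uniform bi-Lipschitz constants on each chart and the compactness of $X$ guarantee that this sum converges to an integer rectifiable $n$-current of finite mass equal to $\mathcal{H}^n(X)$ and weight identically $1$. To verify $\set(T) = X$, I would invoke Bishop--Gromov volume comparison for Alexandrov spaces, which provides a uniform positive lower bound on $\mathcal{H}^n(B(x,r))/r^n$ at \emph{every} point of $X$, singular or regular, so that every point has positive $n$-density of $\|T\|$.

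The main obstacle is to establish $\partial T = 0$. Heuristically, since the transition maps between overlapping charts are orientation-preserving bi-Lipschitz homeomorphisms, the boundary contributions cancel pairwise on chart overlaps, so the only possible support of $\partial T$ is the singular set $X_{\mathrm{sing}} = X \setminus X_{\mathrm{reg}}$. The decisive input is that, for a closed Alexandrov space without boundary, $X_{\mathrm{sing}}$ has Hausdorff dimension at most $n-2$, so $\mathcal{H}^{n-1}(X_{\mathrm{sing}}) = 0$. Because $\partial T$ is an integer rectifiable $(n-1)$-current by the Ambrosio--Kirchheim boundary rectifiability theorem, and by the cancellation argument must be concentrated on $X_{\mathrm{sing}}$, its underlying $(n-1)$-rectifiable set has vanishing $\mathcal{H}^{n-1}$-measure, forcing $\partial T = 0$. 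The delicate step is to rigorously promote the heuristic cancellation of chart boundaries into the statement that $\mathrm{spt}(\partial T) \subset X_{\mathrm{sing}}$; I expect this will require either a careful inductive gluing along a finite exhaustion of $X_{\mathrm{reg}}$, or an approximation of $T$ by smoothed currents (for instance by convolution along distance-coordinate charts) that manifestly have zero boundary on the regular part, followed by a mass-continuity argument.
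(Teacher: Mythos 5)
Your construction of $T$ itself follows the paper closely: a countable disjointified atlas of bi-Lipschitz strainer charts on the regular/strained part, pushforwards of multiplicity-one currents $[\![A_i]\!]$, weight $\theta_T\equiv 1$, finiteness of mass via the uniform bi-Lipschitz bounds and Bishop--Gromov, and $\set(T)=X$ by the volume comparison argument at every (possibly singular) point. All of that is sound and is essentially the paper's Sections 3, 4.1 and the proof of $\set(T)=X$.

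The genuine gap is in your treatment of $\partial T=0$. Your argument runs: boundary contributions cancel on overlaps, so $\partial T$ is concentrated on $X_{\mathrm{sing}}$, which has Hausdorff dimension at most $n-2$; since $\partial T$ is integer rectifiable by Ambrosio--Kirchheim boundary rectifiability, it must vanish. But boundary rectifiability applies only to \emph{integral} currents, i.e.\ it presupposes that $\partial T$ already has finite mass --- which is exactly what is unproved here, so the appeal is circular. Worse, the individual pieces $(\varphi_i)_{\#}[\![\varphi_i^{-1}(E_i)]\!]$ need not have boundaries that are currents at all unless the sets $\varphi_i^{-1}(E_i)$ have finite perimeter (cf.\ the paper's remark after the definition of boundary), and even when they do, the total boundary mass of the countable sum can diverge; the interfaces along which cancellation must occur lie inside the regular part, not in $X_{\mathrm{sing}}$, so ``$\mathrm{spt}(\partial T)\subset X_{\mathrm{sing}}$'' is precisely the hard statement, not a consequence of orientability of the transitions. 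You flag this as the delicate step and defer it to an unspecified exhaustion or smoothing argument, but that deferred step is the heart of the proof. The paper sidesteps the cancellation problem entirely: by Mitsuishi's theorem the group of integral $n$-cycles on a closed orientable Alexandrov space equals $H_n(X;\mathbb{Z})\cong\mathbb{Z}$, so there is a boundaryless integral current $T'$ built from Lipschitz singular simplices, and the Sormani--Wenger equality criterion (agreement of the parametrized sets up to $\mathcal{H}^n$-null sets, positivity of the transition Jacobians from the oriented strainer atlas, and equality of the weights a.e.) shows $T=\pm T'$, whence $\partial T=0$ is inherited rather than obtained by cancellation. To complete your route you would need an independent proof that $\mathbf{M}(\partial T)<\infty$ and that $\|\partial T\|$ vanishes on the regular part --- neither of which your proposal supplies.
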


\smallskip

 Li and Perales  \cite{LP} studied sequences of integral current spaces $(X_j,d_j,T_j)$ such that $\bdry T_j=0$. In the noncollapsing case they proved that if the integral current  $T_j$ has weight $1$ and $(X_j,d_j)$ are Alexandrov spaces with uniform lower curvature and upper diameter  bounds, then,  the Gromov-Hausdorff and 
intrinsic flat limits of the sequence $(X_j,d_j,T_j)$ agree.

\begin{theorem} \cite{LP} \label{LP}
Let $(X_j, d_j, T_j)$ be $n$-dimensional integral current spaces with weight $1$ and no boundary. 
Suppose that $(X_j, d_j)$ are Alexandrov spaces with nonnegative curvature and $\mbox{diam}(X)\leq D$.
Then either the sequence converges to the zero integral current space in the intrinsic flat sense
$$(X_j,d_j,T_j)\overset{\frak{F}}\longrightarrow \bold{0}$$
or a subsequence converges in the Gromov-Hausdorff sense and intrinsic flat sense to the same space
$$( X_{j_k},d_{j_k})\overset{\text{GH}}\longrightarrow(X,d)$$
and
$$(X_{j_k},d_{j_k},T_{j_k})\overset{\frak{F}}\longrightarrow(X,d,T).$$
\end{theorem}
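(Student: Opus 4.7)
My plan is to combine two compactness theorems---Gromov's for Alexandrov spaces and Wenger's for integral current spaces---and then reconcile the two limits using the canonical current structure produced by Theorem A.

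First, I would extract a Gromov--Hausdorff convergent subsequence. Since $(X_j,d_j)$ are Alexandrov spaces of nonnegative curvature with $\diam(X_j)\leq D$, Gromov's precompactness theorem yields a subsequence $(X_{j_k},d_{j_k}) \GHto (X_\infty, d_\infty)$, where $(X_\infty,d_\infty)$ is again a nonnegatively curved Alexandrov space by Burago--Gromov--Perelman stability. Because $T_j$ has weight $1$, $\mass(T_j) = \hm^n(X_j)$ is uniformly bounded (Bishop--Gromov) and $\bdry T_j = 0$, so Ambrosio--Kirchheim/Sormani--Wenger compactness extracts a further subsequence converging in the intrinsic flat sense to some integral current space $(Y,d_Y,S)$, possibly zero. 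A standard feature of intrinsic flat convergence is that $\spt(S)$ embeds isometrically into the GH limit, so we may view $\spt(S)\subseteq X_\infty$.

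Next I would split along the dichotomy. If $\mass(T_{j_k})\to 0$, lower semicontinuity of mass forces $S=0$ and the sequence converges in the intrinsic flat sense to the zero integral current space. Otherwise the sequence is non-collapsed, and Perelman's stability theorem guarantees $\dim X_\infty = n$ together with continuity of $\hm^n$ under the convergence, so $\hm^n(X_{j_k}) \to \hm^n(X_\infty) > 0$. Theorem A then endows $X_\infty$ with a canonical weight-$1$ boundaryless integral current $T$ satisfying $\spt(T)=X_\infty$ and $\mass(T)=\hm^n(X_\infty)$, and what remains is to identify the intrinsic flat limit $S$ with $T$ up to sign.

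The main obstacle is this last identification. I would argue as follows: lower semicontinuity gives $\mass(S)\leq \liminf \mass(T_{j_k}) = \hm^n(X_\infty) = \mass(T)$, while the support inclusion $\spt(S)\subseteq X_\infty = \spt(T)$ holds from the GH/intrinsic flat compatibility. Both $S$ and $T$ are weight-$1$, boundaryless integral $n$-currents on the same $n$-dimensional Alexandrov space, and by the Ambrosio--Kirchheim parametric representation such a current is determined by its support together with a choice of orientation. The delicate step, which I expect to be the genuine obstacle, is propagating compatibility of orientations along the sequence and ruling out cancellation in the limit that would strictly decrease the mass; this can be done by a local argument on the dense set of regular points of $X_\infty$, where Alexandrov spaces are bi-Lipschitz to Euclidean balls, combined with the fact that $\mass(S)=\mass(T)$ forces the orientation of $S$ to agree with that of $T$ almost everywhere.
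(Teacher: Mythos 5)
First, note that this theorem is not proved in the paper at all: it is quoted from Li--Perales \cite{LP}, so the only ``proof'' the paper offers is the citation. Judged on its own merits, your proposal has the right skeleton (Gromov precompactness, Ambrosio--Kirchheim/Wenger compactness, the flat limit sitting isometrically inside the GH limit, and the collapsed/non-collapsed dichotomy), but it breaks down exactly at the step you yourself flag as delicate, and the fix you offer is circular. Lower semicontinuity of mass under intrinsic flat convergence gives only $\mass(S)\le\liminf\mass(T_{j_k})$; it gives no lower bound whatsoever on $\mass(S)$, and a priori $S$ could be the zero current, or could have $\set(S)$ a proper subset of $X_\infty$, even when the sequence is non-collapsed --- ruling this out is the entire content of the theorem. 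You then propose to ``rule out cancellation \ldots combined with the fact that $\mass(S)=\mass(T)$,'' but $\mass(S)=\mass(T)$ is precisely what has to be proved and does not follow from anything you established. The genuine missing ingredient is the non-cancellation argument of Sormani--Wenger \cite{SW10}, which Li--Perales adapt: one shows that for every point $x$ of the GH limit the masses $\|T_{j_k}\|(B_r(x_{j_k}))$ of balls around approximating points admit a uniform lower bound $c\,r^n$ (Bishop--Gromov plus non-collapsing), and then uses filling-volume/slicing estimates to show this lower bound persists for $\|S\|$ in the limit, so that $\liminf_{r\to 0}\|S\|(B_r(x))/(\omega_n r^n)>0$ for every $x$, i.e.\ $\set(S)=X_\infty$. (The paper itself points to this mechanism: the proof of Corollary \ref{setT=X} is modeled on ``the last part of the proofs given in Theorem 7.1 \cite{SW10} and Theorem 0.1 \cite{LP}.'') Without that step your argument proves nothing beyond $\spt(S)\subseteq X_\infty$.

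Two further remarks. Your route through Theorem A is also heavier than what is needed: invoking Theorem A on the limit requires knowing that $X_\infty$ is a closed \emph{oriented} Alexandrov space, and orientability of the limit is not among your hypotheses --- the $X_j$ carry weight-$1$ currents, and extracting a topological orientation on $X_\infty$ (say via Perelman stability) is an extra argument you have not supplied. By contrast, the cited Li--Perales proof works directly with the limit current $S$ itself: once $\set(S)=X_\infty$ is established, $(X_\infty,d_\infty,S)$ \emph{is} the integral current space appearing in the conclusion, so no independent construction of a canonical $T$ on the limit, and no identification $S=\pm T$, is required. Finally, your dichotomy should be phrased in terms of $\liminf\hm^n(X_j)$ (after passing to subsequences) rather than ``$\mass(T_{j_k})\to 0$ or not,'' since the problematic scenario the theorem excludes is exactly mass bounded below with a vanishing flat limit.
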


Combining Theorem A with Theorem \ref{LP}, the following theorem is then immediate.

\begin{thmb}\label{t:2} 
Let $X_i$ be closed, oriented $n$-dimensional Alexandrov spaces with curvature bounded below by $\kappa$. 
 Suppose further that $\diam(X_i)\leq D$ and the sequence is non-collapsing. 
 Then the $X_i$ can be made into $n$-dimensional integral current spaces such that a subsequence converges in the intrinsic flat and Gromov-Hausdorff sense 
 to the same metric space.
\end{thmb}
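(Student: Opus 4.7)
The plan is to assemble the two preceding results, since Theorem B is asserted to be immediate from Theorem A and Theorem \ref{LP}. The task reduces to (i) supplying integral current structures to the $X_i$, and (ii) verifying that the non-collapsing hypothesis places the sequence in the non-trivial alternative of the dichotomy in Theorem \ref{LP}.

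The first step is to apply Theorem A to each $X_i$: this yields an integral current $T_i$ on $X_i$ of weight $1$ with $\partial T_i = 0$ and $\set(T_i) = X_i$, so that $(X_i, d_i, T_i)$ becomes an $n$-dimensional integral current space with no boundary, matching the hypotheses of Theorem \ref{LP}.

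Next, Gromov's precompactness theorem applied to the class of Alexandrov spaces with $\secc \geq \kappa$ and $\diam \leq D$ produces a subsequence $(X_{i_k}, d_{i_k})$ converging in the Gromov-Hausdorff sense to an Alexandrov space $(X, d)$ with $\secc \geq \kappa$. The non-collapsing hypothesis gives a uniform lower bound on $\hm^n(X_{i_k})$, which equals $\mass(T_{i_k})$ because the weights are $1$, so the sequence cannot converge to the zero integral current space. Theorem \ref{LP} then forces the second alternative: along a further subsequence the intrinsic flat limit exists and coincides with $(X, d)$, equipped with some integral current structure $T$.

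The only subtlety I anticipate is that Theorem \ref{LP} is stated for nonnegative curvature whereas Theorem B permits a general lower bound $\kappa \in \R$. I expect this to be cosmetic: the curvature hypothesis in \cite{LP} enters via Gromov's compactness theorem and Bishop-Gromov volume comparison, both valid for arbitrary $\kappa$, so the dichotomy and its conclusion carry over without change. With this observation the theorem follows as a direct concatenation of Theorem A and Theorem \ref{LP}.
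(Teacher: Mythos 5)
Your overall assembly is the same as the paper's: the paper treats Theorem B as immediate from Theorem A (which equips each $X_i$ with a weight-one integral current $T_i$ with $\partial T_i=0$ and $\set(T_i)=X_i$) together with Theorem \ref{LP}, and your remark that the curvature hypothesis ``nonnegative'' versus ``$\geq\kappa$'' is inessential agrees with how the paper itself describes the Li--Perales result (uniform lower curvature and diameter bounds).

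There is, however, one step that does not work as you state it: the claim that a uniform lower bound on $\mass(T_{i_k})$ by itself prevents intrinsic flat convergence to the zero integral current space. Mass is only lower semicontinuous under intrinsic flat convergence, so the inequality goes the wrong way; sequences with uniformly positive mass can converge to the zero current through cancellation, which is exactly the phenomenon analyzed in \cite{SW10}. Ruling out cancellation for non-collapsing sequences of Alexandrov spaces is precisely the nontrivial content of the non-collapsing case of \cite{LP}: their argument uses uniform lower density bounds for the currents coming from Bishop--Gromov comparison (the same mechanism as in Corollary \ref{setT=X}) to show that every point of the Gromov--Hausdorff limit lies in the canonical set of the limit current, so the limit is nonzero and coincides with the Gromov--Hausdorff limit. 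The correct way to finish is therefore to invoke the Li--Perales theorem in its non-collapsing formulation (as quoted in the paper's introduction), or to reproduce that density argument, rather than to appeal to a bare mass lower bound. A further small inaccuracy: with weight $1$ one has $\|T_i\|=\lambda\,\mathcal{H}^n\rstr\set(T_i)$ with $\lambda\in[n^{-n/2},2^n/\omega_n]$ by Lemma \ref{lemma-weight}, so $\mass(T_{i})$ is comparable to, not necessarily equal to, $\mathcal{H}^n(X_{i})$; comparability is all the non-collapsing hypothesis gives you, and it suffices for the corrected argument.
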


 We note that the relationship between intrinsic flat limits and Gromov-Hausdorff limits of sequences has been studied previously. 
  Sormani and Wenger \cite{SW10} proved that the limits agree for sequences of closed Riemannian manifolds with nonnegative Ricci curvature, diameter bounded above, and volume bounded below by a positive constant. For the case of Riemannian manifolds with boundary, Perales  \cite{Per} has shown that under the same conditions the limits agree.
  Munn  \cite{M} proved for closed Riemannian manifolds that this statement is also true when nonnegative Ricci curvature is substituted by two sided bounds on the Ricci curvature. 
  Matveev and Portegies then showed that this result extends to sequences of manifolds with an arbitrary uniform lower bound on Ricci curvature, and that the limiting current is essentially unique \cite{MP}.
  In particular, this tells us that those Alexandrov spaces arising as limits of sequences of Riemannian manifolds with a lower curvature bound admit an integral current structure.

\smallskip
\subsection*{Organization}
The paper is structured as follows. In Section \ref{sec-background}, we introduce the tools and results necessary to the proof of  Theorem A. In Section \ref{3}, we introduce 
a new definition of orientability in Alexandrov spaces using strainers, which will allows us to define an oriented atlas for our spaces in the sense of Federer (see Definition \ref{Federer}). Finally, in Section \ref{sec-t:1}, we prove Theorem A.

\begin{acknowledge} 
The authors are grateful to Christina Sormani for suggesting this problem to us and for stimulating conversations. P. Rajan and C. Searle would also like to thank Frederick Wilhelm for 
numerous helpful conversations. We would also like to 
thank the Banff International Research Station for its support during the Women in Geometry Workshop (15w5135), where the work on this paper was begun. M. Jaramillo was supported by a U Connecticut OVPR/AAUP travel grant.  R. Perales and P. Rajan were supported by Christina Sormani's  NSF-DMS grant \#1309360. P. Rajan was supported in part by Simons Foundation Grant \#358086 (Frederick Wilhelm).  C. Searle  was supported in part by Simons Foundation Grant \#355508 (Catherine Searle) and her NSF-DMS grant \#160178.   P. Rajan  and A. Siffert thank the Max Planck Institute for Mathematics in Bonn for support and providing excellent working conditions. A. Siffert was supported by a travel grant from DAAD \#464342.
This material is based in part upon work supported by the National Science Foundation under Grant No. DMS-1440140 while R. Perales and C. Searle were in residence at the Mathematical Sciences Research Institute in Berkeley, California, during the Spring 2016 semester.
\end{acknowledge}


\section{Preliminaries}\label{sec-background}
In this section we gather the definitions and tools we will need to prove Theorem A.
 In Subsection \ref{Orientation}, we present basic material on orientation in rectifiable spaces, in Subsection \ref{ssec-AS} we present the material we will need from 
Alexandrov geometry,
and in  Subsection \ref{ssec-ICS} we go over the 
definitions and basic tools we will need from the theory of Integral Current Spaces.

\subsection{Countably $\mathcal{H}^n$ Rectifiable Spaces}\label{Orientation}

In this subsection we establish properties related to orientation of a countably $\mathcal{H}^n$ rectifiable space, where $\mathcal{H}^n$ denotes the $n$ dimensional Hausdorff measure. We first recall the definition of such a space.

\begin{definition}[{\bf Countably $\mathcal{H}^n$ Rectifiable Space}] Given Borel measurable sets $A_i\subset \R^n$, we say that a metric space $X$ is  {\em countably $\mathcal{H}^n$ rectifiable} if and only if there
exist countably many Lipschitz maps $\varphi_i: A_i\rightarrow X$ such that $$\mathcal{H}^n( X\setminus \bigcup_{i=1}^{\infty}\varphi_i(A_i))=0.$$ 
\end{definition}

In defining an orientation on a countably  $\mathcal{H}^n$ rectifiable space, we first need an atlas.
We now recall the definition of an atlas for a countably  $\mathcal{H}^n$  rectifiable space, as given in Federer \cite{F}.

\begin{definition}[{\bf Atlas of Rectifiable Space}]\label{Atlas} For a countably  $\mathcal{H}^n$  rectifiable space, $X$, 
a bi-Lipschitz collection of charts $\{ A_i, \{\varphi_i\}\}$, where $\varphi_i: A_i\subset \R^n\rightarrow X$, is called an \emph{atlas} of $X$.
\end{definition}

An orientation on a countably  $\mathcal{H}^m$ rectifiable space is now defined via an oriented atlas as follows.

\begin{definition}[{\bf Oriented Atlas}] \label{Federer}
Suppose we have a countably $\mathcal{H}^n$ rectifiable space $X$ . Then we say that we have an {\em oriented atlas} if 
$$\det (\nabla(\varphi_i^{-1}\circ\varphi_j))>0$$
for all overlapping charts 
almost everywhere on  $\varphi_j(A_j)  \cap \varphi_i(A_i)$. 
\end{definition}

\begin{definition}[{\bf Orientation}]\label{CHmR} An {\em orientation} on a countably $\mathcal{H}^n$ rectifiable space $X$ is an equivalence class of atlases where two atlases, $\{A_i, \varphi_i  \}$, $\{\bar{A}_j, \bar{\varphi_j} \}$ are considered to be equivalent if their union is an oriented atlas.
\end{definition} 

\begin{remark} Given an orientation,  $[\{A_i, \varphi_i  \}]$,  we can choose a representative atlas, $\{A_i, \varphi_i  \}$, such that the charts are pairwise disjoint, $\varphi_i(A_i)  \cap \varphi_j (A_i) = \emptyset $, and the domains $A_i$ are precompact. We call such an oriented atlas a {\em preferred oriented atlas}.
\end{remark}


\subsection{Alexandrov Spaces}\label{ssec-AS}

Recall that a complete, finite dimensional length space $(X,d)$ is an \emph{Alexandrov space} if it has curvature bounded from below (see e.g. \cite{BBI}). Alexandrov spaces are 
known to be locally compact \cite{BBI}. Further, complete, locally compact, intrinsic metric spaces are {\em proper}, that is, any bounded closed set in such a space is compact (cf. \cite{PY}). Recall that a metric space is first countable. Let $X$ be an Alexandrov space, then for any $x\in X$ and for $r\in \mathbb{Z}^+$ consider the countable exhaustion of  $X$ by metric balls $B_r(x)$. Since $X$ is proper, each ball endowed with the subspace topology is precompact and first countable and hence second countable. Further, a metric space is second countable if and only it it is separable. Hence Alexandrov spaces are separable.

\smallskip

In an Alexandrov space, the unit tangent space to a point is replaced with the \emph{space of directions}, $\Sigma_x$, which is  
defined to be the completion of the 
space of geodesic directions at $x$. Observe that $\Sigma_x$ is itself a compact Alexandrov space with curvature bounded below by $1$. 
For $p, q\in X$, we will denote the set of all directions at $p$ corresponding to minimizing geodesics from $p$ to $q$ by $\Uparrow_p^q$. If there is a unique 
minimizing direction between $p$ and $q$, we will denote it by $\uparrow_p^q$.

\smallskip

We make the following distinction between types of points in an Alexandrov space, based on its space of directions. That is, we call a point $x\in X$ {\em regular} if $\Sigma_x$ is isometric to $S^{n-1}(1)$, the $(n-1)$-dimensional unit round sphere, and {\em singular} otherwise. 
We will denote the set of all regular points in $X$ by $R_X$ and the set of singular points by $S_X$. 

\smallskip

Next we introduce a technical tool for the introduction of a local coordinate system, the so-called strainers, originally defined in  \cite{BGP}.
We use $\sphericalangle(p, x, q)$ to denote the angle at $x$ of the geodesic triangle at $\Delta(pxq)\subset X$.
We use $\sphericalangle(\mu, \eta)$ to denote the angle between the two directions $\mu, \eta\in \Sigma_p$.
We use $\tilde{\sphericalangle}(p, x, q)$ to denote the comparison angle, that is the angle of the geodesic triangle $\tilde{\Delta}(pxq)$ in the simply connected $n$ dimensional Riemannian manifold with constant sectional curvature equal to $\kappa$, $\mathcal{M}^n_{\kappa}$.

\begin{definition}[\textbf{Strainers}]
\label{strainer}Let $X$ be an Alexandrov space. A point $x\in X$ is said to
be $( n,\delta) $--strained by the strainer $\left\{ \left(
a_{i},b_{i}\right) \right\} _{i=1}^{n}\subset X\times X$ provided that for
all $i\neq j$ we have%
\begin{equation*}
\begin{array}{ll}
\tilde{\sphericalangle}\left( a_{i},x,b_{i}\right) >\pi -\delta , \,\,\tilde{%
\sphericalangle}\left( a_{i},x,b_{j}\right) >\frac{\pi }{2}-\delta , \\ 
\tilde{\sphericalangle}\left( b_{i},x,b_{j}\right) >\frac{\pi }{2}-\delta,\,\,
\tilde{\sphericalangle}\left( a_{i},x,a_{j}\right) >\frac{\pi }{2}-\delta. 
\end{array}%
\end{equation*}

We say $B\subset X$ is an $(n,\delta)$--strained set with strainer $\left\{ \left(
a_{i},b_{i}\right) \right\} _{i=1}^{n}$ provided every point $x\in B$ is $(n,\delta)$%
--strained by $\left\{ \left(
a_{i},b_{i}\right) \right\} _{i=1}^{n}$. 
\end{definition}

Following  the convention in \cite{BBI}, if $\delta<1/100 n$, then we call an $(n,\delta)$-strainer an {\em $n$-strainer}.

\begin{definition}[{\bf Strainer number}]\cite{BBI}
The {\em strainer number} of an Alexandrov space $X$ is the supremum of numbers $n$ such that there exists an $n$-strainer in $X$.
\end{definition}

We will denote the set of all $\left( n,\delta \right) $-strained 
points of $X$ by $R_{X(n, \delta)}$. In general,  we will work with the set $R_{X(n, \delta)}$ rather than $X$ itself, as many properties of $X$ can be expressed in terms of $R_{X(n, \delta)}$. In particular, $R_{X(n, \delta)}$ satisfies a number of very useful properties, which we detail below.

\smallskip

The following Theorem from \cite{BBI} characterizes the set of $(n, \delta)$-strained points.
 \begin{theorem}[10.8.23 in \cite{BBI}] Let $X$ be an $n$-dimensional Alexandrov space. Then for every $\delta>0$ the set of $(n, \delta)$-strained points, that is,  $R_{X(n, \delta)}$, is an open dense set in $X$.
 \end{theorem}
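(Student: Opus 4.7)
The claim has two parts: openness and density of $R_{X(n,\delta)}$ in $X$.

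For openness, fix $x_{0}\in R_{X(n,\delta)}$ with a certifying strainer $\{(a_{i},b_{i})\}_{i=1}^{n}$. Each of the four comparison-angle quantities appearing in Definition \ref{strainer} is determined by the law of cosines in $\mathcal{M}^{n}_{\kappa}$ from the triples of distances $d(a_{i},x)$, $d(b_{j},x)$, and the fixed $d(a_{i},b_{j})$; in particular each is a continuous function of $x$. Since the strainer inequalities are strict, they persist on a metric ball around $x_{0}$, on which the same strainer certifies every point. This gives openness.

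For density, I would argue by induction on the dimension $n=\dim X$, exploiting the fact that at every $p\in X$ the space of directions $\Sigma_{p}$ is itself an $(n-1)$-dimensional Alexandrov space of curvature bounded below by $1$. Given $p\in X$ and $\varepsilon>0$, the plan is to produce an $(n,\delta)$-strained point inside $B_{\varepsilon}(p)$. By the inductive hypothesis applied to $\Sigma_{p}$, for any preassigned $\delta'>0$ the $(n-1,\delta')$-strained directions form an open dense subset of $\Sigma_{p}$; I would pick such a direction together with its certifying strainer $\{(\xi_{i},\eta_{i})\}_{i=1}^{n-1}\subset\Sigma_{p}\times\Sigma_{p}$. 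Choosing points $a_{i},b_{i}\in X$ on short geodesics out of $p$ whose starting directions approximate $\xi_{i}$ and $\eta_{i}$, and adjoining an $n$-th pair $(a_{n},b_{n})$ along a short geodesic transverse to the first $n-1$, one obtains a candidate $n$-strainer at $p$.

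The main obstacle is transferring the strict strainer inequalities from intrinsic angles $\sphericalangle$ on $\Sigma_{p}$ to comparison angles $\tilde{\sphericalangle}$ at nearby points. This relies on two standard estimates from Alexandrov geometry: the limit identity $\sphericalangle(\uparrow_{p}^{a},\uparrow_{p}^{b})=\lim_{a,b\to p}\tilde{\sphericalangle}(a,p,b)$, which controls the error incurred in passing from $\Sigma_{p}$ to $X$, together with uniform continuity of each $\tilde{\sphericalangle}(a_{i},\cdot,b_{j})$ on a small ball around $p$. By choosing the scales along the geodesics and the auxiliary parameter $\delta'<\delta$ with sufficient room, the strict inequalities satisfied on $\Sigma_{p}$ degrade only slightly, so that every point sufficiently close to $p$ inherits an $(n,\delta)$-strainer from the one chosen in $\Sigma_{p}$. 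This yields a strained point inside $B_{\varepsilon}(p)$ and completes the density argument.
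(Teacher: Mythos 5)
Your openness argument is correct and is the standard one: the finitely many quantities $\tilde{\sphericalangle}(a_i,x,b_i)$, $\tilde{\sphericalangle}(a_i,x,b_j)$, $\tilde{\sphericalangle}(a_i,x,a_j)$, $\tilde{\sphericalangle}(b_i,x,b_j)$ are continuous in $x$ (the points $a_i,b_j$ are fixed and distinct from $x$ near $x_0$), so the strict inequalities of Definition \ref{strainer} persist on a metric ball around $x_0$, with the same strainer as certificate.

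The density argument, however, contains a genuine gap: it tries to manufacture a strainer \emph{at} the given point $p$ (and at all points sufficiently close to $p$) out of an $(n-1,\delta')$-strainer for a direction $\xi\in\Sigma_p$, and this cannot work. First, straining the direction $\xi$ only controls comparison angles \emph{at} $\xi$ inside $\Sigma_p$, i.e.\ quantities like $\tilde{\sphericalangle}(\xi_i,\xi,\eta_j)$; it gives no control on the mutual distances $d_{\Sigma_p}(\xi_i,\eta_i)\approx\pi$ and $d_{\Sigma_p}(\xi_i,\xi_j)\approx\pi/2$, which is what you would actually need for points $a_i,b_i$ chosen along geodesics in these directions to strain $p$. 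Second, and more fundamentally, $p$ need not be strainable at all, so no construction anchored at $p$ can succeed: take $X$ the Euclidean cone over a circle of length $1/10$ and $p$ the apex. Then $\Sigma_p$ is a circle of length $1/10$, the inductive hypothesis does hold in $\Sigma_p$ (interior points of short arcs are $(1,\delta')$-strained), yet every two directions at $p$ make angle at most $1/20$, and since angles dominate comparison angles, $\tilde{\sphericalangle}(a,p,b)\le 1/20$ for all $a,b$; hence $p$ is not even $(1,\delta)$-strained for small $\delta$, and the pairs produced by your recipe strain neither $p$ nor the nearby points you claim "inherit" the strainer. Density must instead locate strained points elsewhere in $B_\varepsilon(p)$, and the hypothesis $\dim X=n$ has to enter in an essential way; your sketch never uses it. The argument behind the cited result in \cite{BBI} is dimension-theoretic: in a given ball take the largest $m$ for which an $(m,\delta)$-strained point exists there (and $m\ge 1$, since interior points of geodesics are $(1,\delta)$-strained); if $m<n$, then near such a point there are no $(m+1,\delta)$-strained points, and the distance coordinates of the $m$-strainer give a bi-Lipschitz homeomorphism onto an open subset of $\R^m$ (cf.\ Theorem \ref{bilip}), forcing the local Hausdorff dimension to be $m<n$, a contradiction. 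Without an ingredient of this kind the density claim is out of reach.
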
 
 
 Moreover, we can also show that $R_{X(n, \delta)}$ is path connected and hence  connected, as follows.
 
 \begin{lemma}\label{pconn}
 The set of $(n, \delta)$-strained points, $R_{X(n, \delta)}$, is path-connected.
 \end{lemma}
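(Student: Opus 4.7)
The plan is to connect any two $(n,\delta)$-strained points by a continuous path remaining in $R_{X(n,\delta)}$, using the global path-connectedness of $X$ together with the openness of $R_{X(n,\delta)}$ and the local conical structure of Alexandrov spaces.

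First, I would fix arbitrary $p, q \in R_{X(n,\delta)}$. Since $X$ is a complete length space, it is path-connected, so I can choose a continuous curve $\gamma \colon [0,1] \to X$ with $\gamma(0)=p$ and $\gamma(1)=q$; a priori this curve may meet the complement $X \setminus R_{X(n,\delta)}$, and the remaining job is to perturb it into $R_{X(n,\delta)}$.

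Next, using that every point of $X$ admits arbitrarily small neighborhoods with controlled conical structure (so that $\Sigma_x$ governs the local geometry), I would cover the compact image $\gamma([0,1])$ by finitely many open balls $B(x_i, r_i)$, with $r_i$ chosen small and consecutive balls overlapping. The key local claim is that, for each $i$, the set $R_{X(n,\delta)} \cap B(x_i, r_i)$ is path-connected. When $x_i \in R_{X(n,\delta)}$ this is immediate from the bi-Lipschitz strainer coordinates from BGP/BBI, which make $R_{X(n,\delta)}$ locally Euclidean and hence locally path-connected. When $x_i$ is non-strained (and $n \geq 2$), the space of directions $\Sigma_{x_i}$ is a compact Alexandrov space of dimension $n-1 \geq 1$ with curvature $\geq 1$, hence connected; combining this with the density of $R_{X(n,\delta)}$ in $B(x_i, r_i)$ and the local path-connectedness noted above yields that strained detours around $x_i$ exist inside the ball. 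The case $n = 1$ is immediate, since a closed Alexandrov $1$-space is a circle, on which every point is $(1,\delta)$-strained.

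Finally, I would pick a strained point $y_i$ in each overlap $B(x_i, r_i) \cap B(x_{i+1}, r_{i+1})$ by density, and concatenate local paths
\[
p \leadsto y_1 \leadsto y_2 \leadsto \cdots \leadsto y_{N-1} \leadsto q,
\]
each lying in $R_{X(n,\delta)}$, to produce the desired continuous path. The main obstacle I anticipate is the local claim: establishing that strained points inside a small ball around a non-strained point form a single path-component rather than accumulating in disjoint sheets. This is where the conical neighborhood structure and the connectedness of $\Sigma_{x}$ are essential, and where the proof must use Alexandrov-specific input beyond the purely topological fact that $R_{X(n,\delta)}$ is open and dense.
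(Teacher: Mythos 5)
Your overall strategy (arbitrary continuous path, chain of small balls, local path-connectedness, concatenation) is genuinely different from the paper's, but it has a real gap at exactly the point you flag as the ``main obstacle'': the claim that $R_{X(n,\delta)}\cap B(x_i,r_i)$ is path-connected when the center $x_i$ is \emph{not} strained is asserted, not proved. Openness, density, and local path-connectedness of $R_{X(n,\delta)}$ do not yield this: the complement of a hyperplane in $\R^n$ is open, dense, and locally Euclidean, yet it meets every small ball centered on the hyperplane in two path components. So the issue is whether the non-strained set can locally separate the ball, and nothing in your sketch rules this out. Connectedness of $\Sigma_{x_i}$ by itself is not enough either; what one really needs is that $X\setminus R_{X(n,\delta)}$ has codimension at least two (Lemma \ref{l:singcodim2}, which in the paper requires the no-boundary hypothesis) together with an actual argument --- in a conical neighborhood that is not a manifold --- that a set of Hausdorff codimension two does not disconnect the strained part of the ball. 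That step is the substance of the lemma, and your proposal defers it rather than supplying it; as written, the argument could equally well ``prove'' path-connectedness of the complement of a codimension-one singular locus, which is false.

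For comparison, the paper avoids any local analysis near non-strained points: given $x,y\in R_{X(n,\delta)}$, it takes a geodesic $\gamma$ from $x$ to $y$ and invokes Petrunin's result (Theorem 1.1 in \cite{Pet1}) that the spaces of directions at interior points of a geodesic are mutually isometric. Since $x$ and $y$ have strained open neighborhoods, interior points of $\gamma$ near the endpoints are strained, and the isometry of the spaces of directions along the interior then forces all interior points of $\gamma$ to be strained, so $\gamma$ itself is the required path inside $R_{X(n,\delta)}$. If you want to salvage your route, you would need to prove the local claim honestly, e.g.\ via Perelman's stability theorem (ball homeomorphic to the cone over $\Sigma_{x_i}$) plus the codimension-two bound on the non-strained set and a non-separation argument; that is considerably more machinery than the paper's one-line use of \cite{Pet1}.
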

 \begin{proof} 
 
Let $x, y\in R_{X(n, \delta)}$. Consider $\gamma: [0, 1]\rightarrow X$, a geodesic from $x$ to $y$. By Theorem 1.1 in \cite{Pet1}, it follows that the spaces of directions along the interior of a geodesic are isometric to each other.
 Since both $x$ and $y$ are $(n, \delta)$-strained, we can find open neighborhoods around $x$ and $y$ that are also $(n, \delta)$-strained and 
 hence the interior points of $\gamma$ are $(n, \delta)$-strained and the result follows.
 \end{proof}

The theorem below shows that for $\delta$ sufficiently small,  an $(n,\delta)$-strained point has space of directions, $\Sigma_x$,  almost isometric to a round sphere. 
 
\begin{theorem}\label{almostisometry}\cite{BGP} Let $X$ be a complete $(n-1)$-dimensional space with curvature $\geq 1$ which has an $(n, \delta)$-strainer $\left\{ \left(a_{i},b_{i}\right) \right\} _{i=1}^{n}$. Then for small $\delta>0$, $X$ is almost isometric to the unit sphere $S^{n-1}(1)$, that is, there exists a homeomorphism $\tilde{f}:X\rightarrow S^{n-1}(1)$ such that 
$$||\tilde{f}(r)\tilde{f}(q)|-|rq||<\tau(\delta)|rq|$$ is satisfied for any $q, r\in X,$ where $\tau(\delta)$ is a function satisfying $\lim_{\delta\rightarrow 0} \tau(\delta)=0$.
\end{theorem}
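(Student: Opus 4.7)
The plan is to construct an explicit map $\tilde f\colon X\to S^{n-1}(1)$ by spherical distance coordinates and to verify the almost-isometry bound via comparison-triangle estimates powered by the strainer hypothesis and the curvature lower bound $\geq 1$. First, in the model $S^{n-1}(1)\subset\R^n$ I would fix reference points $A_1,B_1,\ldots,A_n,B_n$ with $|A_iB_i|=\pi$ and $|A_iA_j|=|A_iB_j|=\pi/2$ for $i\neq j$ (e.g.\ $A_i=e_i$, $B_i=-e_i$ in the standard embedding). These form a perfect $(n,0)$-strainer on the round sphere and provide a target ``template'' for $\{(a_i,b_i)\}$ in $X$.

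Next, define $\Phi\colon X\to\R^n$ by $\Phi(x)=(\cos d(a_1,x),\ldots,\cos d(a_n,x))$ and set $\tilde f(x)=\Phi(x)/|\Phi(x)|$. Bonnet--Myers forces $\diam X\leq\pi$; combined with $\tilde\sphericalangle(a_i,x,b_i)>\pi-\delta$ this shows $a_i,b_i$ are $\tau(\delta)$-antipodal, and together with the pairwise near-orthogonality relations in the strainer one deduces $\bigl||\Phi(x)|-1\bigr|\leq\tau(\delta)$, so $\tilde f$ is well-defined and $\tau(\delta)$-close to $\Phi$. For the distance bound, given $q,r\in X$ I would apply the spherical law of cosines in each comparison triangle $\tilde\Delta(a_iqr)\subset\mathcal{M}^{n-1}_1$,
\begin{equation*}
\cos|qr|=\cos d(a_i,q)\cos d(a_i,r)+\sin d(a_i,q)\sin d(a_i,r)\cos\tilde\sphericalangle(q,a_i,r),
\end{equation*}
and combine these across $i$: the strainer inequalities force the directions $\uparrow_{a_i}^q$ and $\uparrow_{a_i}^r$ to decompose almost orthogonally into the frame determined by the remaining $(a_j,b_j)$-axes, so after algebraic simplification the right-hand sides telescope into $\tilde f(q)\cdot\tilde f(r)$ up to error $\tau(\delta)\sin|qr|$. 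This rewrites as $\bigl||\tilde f(q)\tilde f(r)|-|qr|\bigr|\leq\tau(\delta)|qr|$, the desired bound.

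Finally, the almost-isometry estimate yields continuity and injectivity of $\tilde f$; compactness of $X$ (again from Bonnet--Myers) makes $\tilde f$ a closed map, while both spaces have topological dimension $n-1$, so an invariance-of-domain argument makes $\tilde f$ open, hence surjective onto the connected target $S^{n-1}(1)$. The principal obstacle lies in converting the pointwise strainer inequalities into a \emph{uniform} distance estimate valid for all pairs $q,r\in X$: controlling the angle decomposition precisely is delicate, and the standard route is to first prove the bound locally near a single strained point---where distance coordinates are genuinely bi-Lipschitz to Euclidean coordinates on the tangent sphere---and then propagate the local bound to all of $X$ using the openness, density, and path-connectedness of $R_{X(n,\delta)}$ (Lemma \ref{pconn}).
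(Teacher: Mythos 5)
The paper itself offers no proof of this statement---it is quoted from \cite{BGP}---so your proposal can only be measured against the classical argument, and as written it has genuine gaps. The first is in your treatment of the hypothesis. From $\tilde{\sphericalangle}(a_i,x,b_i)>\pi-\delta$ together with $\diam X\leq\pi$ you conclude that $a_i,b_i$ are $\tau(\delta)$-antipodal; this inference is false: a comparison angle close to $\pi$ at $x$ only says $|a_ix|+|xb_i|\approx|a_ib_i|$, i.e.\ that $x$ lies near a geodesic from $a_i$ to $b_i$, and gives no lower bound on $|a_ib_i|$ (take $x$ the midpoint of a short geodesic). Worse, if the hypothesis is read literally as ``some point of $X$ is $(n,\delta)$-strained'', it is vacuous for the small $\delta$ at issue, because an $(n,\delta)$-strained point forces Hausdorff dimension at least $n$ (the paper itself recalls this in the proof of Lemma \ref{l:singcodim2}), while $\dim X=n-1$. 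The intended hypothesis, as in \cite{BGP} where this is applied to the space of directions at an $(n,\delta)$-strained point of an $n$-dimensional space, is that the $2n$ points themselves sit in almost-model position: $|a_ib_i|>\pi-\delta$ and all cross distances within $\tau(\delta)$ of $\pi/2$. Your construction needs that reading, and your closing ``propagation over $R_{X(n,\delta)}$'' is empty for the same dimensional reason: $R_{X(n,\delta)}=\emptyset$ when $\dim X=n-1$.

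The second gap is that the heart of the theorem is asserted rather than proved. The claim that the law-of-cosines identities ``telescope'' into $\tilde f(q)\cdot\tilde f(r)$ with error $\tau(\delta)\sin|qr|$, uniformly in $q,r$, is exactly the statement that the directions $\Uparrow_q^{a_i}$ almost span every space of directions and that the distance vector determines the point quantitatively---in particular the crucial lower bound $|\tilde f(q)\tilde f(r)|\geq(1-\tau(\delta))|qr|$; nothing in the strainer inequalities yields this for free, and even well-definedness of $\tilde f$ (that $|\Phi(x)|$ stays away from $0$, i.e.\ no point is at distance $\approx\pi/2$ from all $2n$ strainer points) requires an argument. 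The classical route in \cite{BGP} obtains the global estimate by induction on the number of strainer pairs, using the rigidity statement that a space of curvature $\geq 1$ containing two points at distance $>\pi-\delta$ is $\tau(\delta)$-close to a spherical suspension, rather than by direct algebra; a local estimate near well-strained points plus connectedness cannot be ``propagated'' to a multiplicative bound for far-apart pairs $q,r$. Finally, your appeal to invariance of domain is not available: at this stage $X$ is only known to be an $(n-1)$-dimensional Alexandrov space, not a topological manifold (that it is one is, in effect, a consequence of the theorem), so openness and surjectivity of $\tilde f$ need a different argument. In short, the distance/cosine-coordinate idea is a reasonable starting point, but the hypothesis is misused, the key uniform estimate is missing, and the topological conclusion is unjustified.
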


In fact, by Corollary 9.6 from \cite{BGP},
if we again choose $\delta$ sufficiently small, $R_{X(n, \delta)}$ consists entirely of interior points.\begin{corollary}\cite{BGP}\label{interior} Let $X$ be a complete $(n-1)$-dimensional space with curvature $\geq 1$ which has an $(n, \delta)$-strainer $\left\{ \left(a_{i},b_{i}\right) \right\} _{i=1}^{n}$. 
Then, for sufficiently small $\delta>0$,  
\begin{enumerate}

\item $X$ has no boundary; and 
\item The set of $(n, \delta)$-strained points in an $n$-dimensional Alexandrov space with curvature bounded below are interior points.
\end{enumerate}
\end{corollary}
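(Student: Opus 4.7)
The plan is to prove (1) by induction on $n$ and then derive (2) directly from (1) applied to the space of directions. The induction is driven by the recursive definition of Alexandrov boundary---$p\in\partial X$ if and only if $\partial\Sigma_p\neq\emptyset$---together with the fact that, via Toponogov comparison, strainer conditions in $X$ descend to strainer conditions in $\Sigma_p$.

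For the inductive step in (1), let $X$ be $(n-1)$-dimensional with curvature $\geq 1$ and an $(n,\delta)$-strainer. By Theorem \ref{almostisometry}, for $\delta$ small $X$ is homeomorphic to $S^{n-1}(1)$; in particular $X$ is a topological $(n-1)$-sphere. For each $p$ in the open dense set $R_{X(n,\delta)}$, let $\{(a_i,b_i)\}_{i=1}^{n}$ be a strainer at $p$. By Toponogov, any choice of directions $\uparrow_p^{a_i},\uparrow_p^{b_i}\in\Sigma_p$ satisfies $\sphericalangle(\uparrow_p^{a_i},\uparrow_p^{b_i})>\pi-\delta$ and $\sphericalangle(\uparrow_p^{a_i},\uparrow_p^{a_j})>\pi/2-c\delta$ for $i\neq j$. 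Since $\Sigma_p$ is $(n-2)$-dimensional with curvature $\geq 1$, we can extract $n-1$ of these pairs to form an $(n-1,c\delta)$-strainer in $\Sigma_p$, and the inductive hypothesis yields $\partial\Sigma_p=\emptyset$, i.e.\ $p\notin\partial X$. Combining the density of this subset with the closedness of $\partial X$ and the topological manifold structure of $X$, we conclude $\partial X=\emptyset$.

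For (2), let $p\in R_{X(n,\delta)}$ in an $n$-dimensional Alexandrov space $X$. Then $\Sigma_p$ is $(n-1)$-dimensional with curvature $\geq 1$, and by the same Toponogov argument the strainer at $p$ induces a full $(n,c\delta)$-strainer in $\Sigma_p$ (no ``absorption'' this time, since $\dim\Sigma_p=n-1$ accommodates all $n$ pairs). Applying (1) to $\Sigma_p$ yields $\partial\Sigma_p=\emptyset$, so $p$ is an interior point of $X$. The main obstacle I anticipate is controlling the degradation of the strainer constant from $\delta$ to $c(\delta)$ under iterated descent to spaces of directions: the initial $\delta$ must be chosen small enough, depending on $n$, so that Theorem \ref{almostisometry} remains applicable at each level of the induction. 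A secondary subtlety is upgrading ``$p\notin\partial X$ on an open dense set'' to $\partial X=\emptyset$, which relies on the closedness of the Alexandrov boundary together with the topological sphere structure furnished by Theorem \ref{almostisometry}.
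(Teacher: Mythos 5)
The paper offers no proof of Corollary \ref{interior}; it is quoted directly from \cite{BGP}. Your inductive scheme (descend strainer data to spaces of directions and use the recursive definition of the Alexandrov boundary) is indeed the BGP-style route, but as written it has two genuine gaps. The first is in part (2), where you claim the directions $\uparrow_p^{a_i},\uparrow_p^{b_i}$ form a full $(n,c\delta)$-strainer in $\Sigma_p$ because ``$\dim\Sigma_p=n-1$ accommodates all $n$ pairs.'' This cannot happen: the existence of an $(n,\delta)$-strained point forces Hausdorff dimension at least $n$ (the paper invokes exactly this in the proof of Lemma \ref{l:singcodim2}), and a direct check shows that even the round $S^{n-1}$ contains no $(n,\delta)$-strained point for small $\delta$. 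What actually descends to $\Sigma_p$ is a configuration of $n$ pairs of points, almost antipodal within pairs and at mutual distance close to $\pi/2$ across pairs (the upper bounds coming from Lemma \ref{CF1.4}); that configuration is the only non-vacuous reading of the hypothesis ``$X$ has an $(n,\delta)$-strainer'' in the corollary and in Theorem \ref{almostisometry}. Equivalently, $n-1$ of the pairs form an $(n-1,c\delta)$-strainer for one of the leftover directions, so the ``absorption'' you dismiss is precisely what is needed; in part (1) as well you never say which point of $\Sigma_p$ your extracted pairs strain, and the base case of the induction is never addressed.

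The second gap is the final step of part (1). Knowing $p\notin\partial X$ on a dense set, together with closedness of $\partial X$, shows only that $\partial X$ has empty interior, which is true of every boundary and proves nothing. The appeal to the ``topological manifold structure'' carries all the weight: it settles the matter only via the nontrivial fact that Alexandrov boundary points are not manifold points (a consequence of Perelman's structure theory), which you neither prove nor cite---and if that fact is granted, the homeomorphism $X\cong S^{n-1}$ from Theorem \ref{almostisometry} finishes part (1) by itself, making your density argument redundant. A self-contained induction must instead show $\partial\Sigma_q=\emptyset$ for \emph{every} $q\in X$, for instance by descending the global configuration to $\Sigma_q$ after discarding the pair nearest to $q$. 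Note also that the set $R_{X(n,\delta)}$ you use in part (1) is not covered by the quoted density theorem of \cite{BBI}, which concerns $(\dim X,\delta)$-strained points; for the $(n-1)$-dimensional space $X$ of part (1) it is in fact empty for small $\delta$, so the points at which you run your descent are not available.
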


 Moreover, for every $\delta>0$, the set $R_X$ is contained in the set $R_{X(n, \delta)}$. In fact, $R_X$ is the intersection of 
 the $(n, \delta)$-strained points, as detailed in the theorem below.
 
\begin{theorem}\cite{BBI}\label{regulardense}
The set of regular points, $R_X$,  in an Alexandrov spaces is dense and moreover is an intersection of a countable collection of open dense sets. More precisely,
$R_X = \cap_{i\in\N}R_{X(n,1/i)}$.
\end{theorem}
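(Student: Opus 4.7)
The density claim follows formally once we establish the set equality $R_X=\bigcap_{i\in\N} R_{X(n,1/i)}$: by Theorem 10.8.23 each $R_{X(n,1/i)}$ is open and dense in $X$, and an Alexandrov space is a complete length space, hence a Baire space, so the countable intersection is automatically dense. The heart of the argument is thus the set equality.

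\emph{The inclusion $R_X \subseteq R_{X(n,\delta)}$ for every $\delta > 0$.} Given $x\in R_X$, the space of directions $\Sigma_x$ is isometric to $S^{n-1}(1)$, so we may select $n$ pairwise orthogonal antipodal pairs of directions $(\xi_i^+, \xi_i^-)$, i.e.\ the images of the standard coordinate frame $\pm e_1,\dots,\pm e_n$. By definition of $\Sigma_x$ as the completion of the set of geodesic directions, each $\xi_i^\pm$ can be approximated to arbitrary angular precision by the initial direction of a genuine short geodesic $\gamma_i^\pm:[0,r]\to X$. Setting $a_i=\gamma_i^+(r)$ and $b_i=\gamma_i^-(r)$, the Euclidean comparison angles $\tilde{\sphericalangle}(a_i,x,b_j)$ converge, as $r\to 0$, to the true angles in $\Sigma_x$ between the initial directions, which are exactly $\pi$ (when the pair is $(a_i,b_i)$) or $\pi/2$ (for the cross pairs). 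Hence for sufficiently small $r$ and sufficiently accurate approximations, the strainer inequalities of Definition \ref{strainer} hold for the prescribed $\delta$.

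\emph{The inclusion $\bigcap_i R_{X(n,1/i)} \subseteq R_X$.} Fix $x$ in the intersection. For every $\delta>0$ choose an $(n,\delta)$-strainer $\{(a_i,b_i)\}_{i=1}^n$ of $x$, and pass to the $2n$ directions $\xi_i^\pm=\uparrow_x^{a_i},\uparrow_x^{b_i}$ in $\Sigma_x$. Combining Toponogov's inequality $\sphericalangle\ge\tilde{\sphericalangle}$ at $x$ with the fact that $\Sigma_x$ is a length space of curvature $\ge 1$ (and hence diameter $\le\pi$, with valid triangle inequality), the comparison-angle strainer conditions at $x$ translate into angle conditions on $\{\xi_i^\pm\}$ that form an $(n,\delta')$-strainer in the $(n-1)$-dimensional space $\Sigma_x$, for some $\delta'=\delta'(\delta)$ satisfying $\delta'\to 0$ as $\delta\to 0$. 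Theorem \ref{almostisometry} then produces a homeomorphism $\tilde f_\delta:\Sigma_x\to S^{n-1}(1)$ whose metric distortion is bounded by $\tau(\delta')$. Since this holds for every $\delta>0$ and $\Sigma_x$ is a fixed compact metric space, we conclude $d_{GH}(\Sigma_x,S^{n-1}(1))=0$, so $\Sigma_x$ is isometric to $S^{n-1}(1)$ and $x\in R_X$.

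\emph{Main obstacle.} The delicate step is the second inclusion: translating a strainer hypothesis in $X$ (formulated via Euclidean comparison at $x$) into a strainer hypothesis in $\Sigma_x$ (whose strainer definition uses spherical comparison), with quantitative control $\delta'\to 0$ as $\delta\to 0$. Once that is in hand, upgrading ``$\tau(\delta')$-almost isometric to $S^{n-1}(1)$ for every $\delta>0$'' to an actual isometry is a short Gromov--Hausdorff limit argument using the compactness of $\Sigma_x$ and $S^{n-1}(1)$.
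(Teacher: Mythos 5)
The paper does not prove this statement at all --- it is quoted from \cite{BBI} (and ultimately from \cite{BGP}) without argument --- so there is no internal proof to compare against; judged on its own, your reconstruction follows the standard route of those references and is essentially correct. The inclusion $R_X\subseteq R_{X(n,\delta)}$ via approximating an orthonormal frame of $\Sigma_x\cong S^{n-1}(1)$ by geodesic directions and using that comparison angles increase to the genuine angle as $r\to 0$ is exactly the classical argument, and the Baire-category step for density is fine since Alexandrov spaces are complete and each $R_{X(n,1/i)}$ is open and dense. Two points in the converse inclusion deserve sharpening. First, since minimizing geodesics from $x$ to $a_i,b_i$ need not be unique, take $\xi_i^{\pm}$ to be arbitrary elements of $\Uparrow_x^{a_i}$, $\Uparrow_x^{b_i}$; nothing else changes. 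Second, the upper angle bounds $\sphericalangle(\xi_i^{\pm},\xi_j^{\pm})<\tfrac{\pi}{2}+\tau(\delta)$ do \emph{not} follow from the triangle inequality and $\diam\Sigma_x\le\pi$ alone, as you suggest: you need the curvature $\ge 1$ comparison in $\Sigma_x$, e.g.\ the fact that every triangle there has perimeter at most $2\pi$ (equivalently, the almost-antipodal lemma), which combined with $\sphericalangle(\xi_i^+,\xi_i^-)>\pi-\delta$ and the Toponogov lower bounds $\sphericalangle\ge\tilde{\sphericalangle}>\tfrac{\pi}{2}-\delta$ yields $\sphericalangle(\xi_i^-,\xi_j^+)\le 2\pi-\sphericalangle(\xi_i^+,\xi_i^-)-\sphericalangle(\xi_i^+,\xi_j^+)<\tfrac{\pi}{2}+2\delta$. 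With that repair, the resulting configuration of $2n$ directions has mutual distances $\tau(\delta)$-close to those of $\pm e_1,\dots,\pm e_n$ in $S^{n-1}(1)$, which is precisely the hypothesis of Theorem \ref{almostisometry} (a strainer of the $(n-1)$-dimensional space $\Sigma_x$ in the sense of \cite{BGP}), and your concluding Gromov--Hausdorff limit argument on the compact space $\Sigma_x$ correctly upgrades ``$\tau(\delta)$-almost isometric for all $\delta$'' to an isometry.
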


 Using the fact that the set of regular points is the intersection of the sets $R_{X(n,1/i)}$, the following result is obtained in Otsu and Shioya \cite{OS}.
 
\begin{theorem}\cite{OS}\label{Hausdim} Let $X$ be an $n$-dimensional Alexandrov space.  The set $S_X$ of singular points in $X$ is of Hausdorff dimension $\leq n-1$.
\end{theorem}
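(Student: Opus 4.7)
The plan is to reduce the problem to a countable union of closed sets, each of which can be controlled using the strainer formalism from \cite{BGP}.

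From Theorem \ref{regulardense} we obtain the decomposition
\[
S_X = X \setminus R_X = \bigcup_{i \in \N}\bigl(X \setminus R_{X(n,1/i)}\bigr),
\]
and each $S_\delta := X \setminus R_{X(n,\delta)}$ is closed since $R_{X(n,\delta)}$ is open by Theorem 10.8.23 in \cite{BBI}. Because Hausdorff dimension is stable under countable unions, it suffices to prove $\dim_{\mathcal H}(S_\delta) \leq n-1$ for every fixed $\delta > 0$.

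Now fix $\delta > 0$. For each $x \in S_\delta$, the density of $R_{X(n,\delta/10)}$ in $X$ produces an $(n, \delta/10)$-strainer $\{(a_i, b_i)\}_{i=1}^n$ which strains some point arbitrarily close to $x$ and consequently strains an open neighborhood $U_x$ nearby. The associated distance map $f_x = (d(a_1,\cdot), \ldots, d(a_n,\cdot))$ is $1$-Lipschitz on all of $X$ and, by the structure theorem of \cite{BGP}, bi-Lipschitz when restricted to $U_x$; in particular $f_x(U_x)$ is open in $\R^n$. I would then argue that a point $y \in S_\delta$ close to $U_x$ is mapped by $f_x$ into a quantitatively $(n-1)$-dimensional slice of $\R^n$: failure of even a single angle inequality in the $(n,\delta)$-strainer condition at $y$ should force, via tangent-cone comparison using Theorem \ref{almostisometry}, the directions $\{\uparrow_y^{a_i}\}$ to become linearly dependent at scale $\delta$ in $\Sigma_y$, which in turn places $f_x(y)$ within a Lipschitz graph over an $(n-1)$-dimensional coordinate hyperplane of $\R^n$. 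Pulling back through the bi-Lipschitz inverse $f_x^{-1}$ on an enlarged strained region and using local compactness together with separability of $X$ (established in Subsection \ref{ssec-AS}) to extract a countable subcover of $S_\delta$ by such neighborhoods then yields $\dim_{\mathcal H}(S_\delta) \leq n-1$.

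The main obstacle is making the ``rank-drop'' statement rigorous at points of $S_\delta$, where the distance functions $d(a_i, \cdot)$ are only Lipschitz and need not be classically differentiable. This forces a replacement of the usual Jacobian computation by a quantitative tangent-cone argument, tracking how the near-sphericity of $\Sigma_y$ for strained $y$ supplied by Theorem \ref{almostisometry} degenerates in exactly one independent direction as $y$ approaches $S_\delta$, and using Corollary \ref{interior} together with the uniform strainer estimates of \cite{BGP} to control the error terms. Once this structural fact is in place, the covering argument above runs without further obstruction.
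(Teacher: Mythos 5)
The paper does not prove this statement; it is quoted from Otsu--Shioya \cite{OS} (the underlying dimension estimate is Theorem 10.6 of \cite{BGP}), so the only thing to assess is whether your sketch actually establishes it. Your first reduction, $S_X=\bigcup_{i\in\N}\bigl(X\setminus R_{X(n,1/i)}\bigr)$ plus countable stability of Hausdorff dimension, is fine, but the remaining step --- $\dim_{\mathcal H}\bigl(X\setminus R_{X(n,\delta)}\bigr)\le n-1$ --- is precisely the content of the cited theorem, and your argument for it has a genuine gap. A point $y\in S_\delta$ is by definition not $(n,\delta)$-strained by \emph{any} strainer; the failure of the one particular strainer $\{(a_i,b_i)\}$ you chose near $x$ is a very weak condition (for instance $\tilde{\sphericalangle}(a_1,y,b_1)\le\pi-\delta$ already holds on a full-dimensional open set of points $y$), so it forces no ``rank drop'' of the directions $\Uparrow_y^{a_i}$ and does not place $f_x(y)$ on an $(n-1)$-dimensional Lipschitz graph. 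Moreover, even if the image $f_x(S_\delta)$ were contained in such a graph, you could not pull the bound back: $f_x$ is bi-Lipschitz only on $(n,\delta/10)$-strained neighborhoods, and no such neighborhood meets $S_\delta$ (an $(n,\delta/10)$-strained point is automatically $(n,\delta)$-strained), so on $S_\delta$ the map $f_x$ is merely Lipschitz, and a Lipschitz map having a low-dimensional image says nothing about the dimension of its source.

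The actual proof runs the other way around: stratify $X\setminus R_{X(n,\delta)}$ by the maximal $m<n$ for which a point is $(m,\delta)$-strained, and at a point of the $m$-th stratum use its \emph{own} $m$-strainer. The key lemma is that the map $y\mapsto\bigl(d(a_1,y),\dots,d(a_m,y)\bigr)$ is locally almost injective on that stratum: if two nearby stratum points had nearly equal strainer coordinates without being proportionally close to each other, the direction from one to the other would supply an $(m+1)$-st strainer pair, contradicting that neither point is $(m+1,\delta)$-strained. This bounds the Hausdorff dimension of each stratum by $m\le n-1$, and a countable cover together with your union argument finishes. That almost-injectivity-on-the-bad-set lemma is the missing ingredient; the single-chart, image-in-a-hypersurface picture cannot replace it.
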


\begin{corollary}\cite{BGP}\label{burst}
 The Hausdorff dimension of $X\setminus R_{X(n, \delta)}$ is less than or equal to $n-1$.
\end{corollary}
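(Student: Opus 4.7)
The plan is to deduce Corollary \ref{burst} as a direct consequence of the two immediately preceding results, Theorem \ref{regulardense} and Theorem \ref{Hausdim}, by exploiting the monotonicity of the $(n,\delta)$-strained condition in $\delta$.

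First I would observe that if $\delta_1 < \delta_2$, then $R_{X(n,\delta_1)} \subset R_{X(n,\delta_2)}$. This is immediate from Definition \ref{strainer}: the inequalities defining a $(n,\delta_1)$-strainer are strictly stronger than those defining a $(n,\delta_2)$-strainer, so any strainer that witnesses a point being $(n,\delta_1)$-strained also witnesses it being $(n,\delta_2)$-strained. In particular, taking $i \in \N$ large enough that $1/i \leq \delta$, we obtain $R_{X(n,1/i)} \subset R_{X(n,\delta)}$, and hence
\begin{equation*}
R_X \;=\; \bigcap_{i \in \N} R_{X(n,1/i)} \;\subset\; R_{X(n,\delta)}
\end{equation*}
by Theorem \ref{regulardense}. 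Passing to complements inside $X$ gives
\begin{equation*}
X \setminus R_{X(n,\delta)} \;\subset\; X \setminus R_X \;=\; S_X.
\end{equation*}

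The conclusion then follows at once by monotonicity of Hausdorff dimension under set inclusion: applying Theorem \ref{Hausdim}, which asserts $\dim_{\mathcal{H}} S_X \leq n-1$, we obtain
\begin{equation*}
\dim_{\mathcal{H}}\bigl(X \setminus R_{X(n,\delta)}\bigr) \;\leq\; \dim_{\mathcal{H}} S_X \;\leq\; n-1.
\end{equation*}

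There is no genuine obstacle here; the only point requiring any care is the inclusion $R_{X(n,\delta_1)} \subset R_{X(n,\delta_2)}$ for $\delta_1 < \delta_2$, which is a straightforward check against Definition \ref{strainer}. Everything else is a packaging of results already quoted from \cite{BBI}, \cite{BGP}, and \cite{OS}.
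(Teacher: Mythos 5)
Your proposal is correct and follows essentially the same route as the paper: the inclusion $R_X \subset R_{X(n,\delta)}$ gives $X \setminus R_{X(n,\delta)} \subseteq S_X$, and Theorem \ref{Hausdim} then bounds the Hausdorff dimension by $n-1$. The only difference is that you spell out the monotonicity of the strainer condition in $\delta$, which the paper takes for granted.
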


\begin{proof}
We have $R_X\subset R_{X(n, \delta)}$ and thus $X\setminus R_{X(n, \delta)} \subseteq S_X$.  
\end{proof}

For the special case of Alexandrov spaces without boundary we get better results.
The following lemma gives us a restriction on the codimension of the  the complement of $R_{X(n, \delta)}$.

\begin{lemma}\label{l:singcodim2}
Let $X$ be an Alexandrov space $X$ without boundary.
Then the set $X\setminus  R_{X(n, \delta)}$ is of Hausdorff codimension $2$ or greater.
\end{lemma}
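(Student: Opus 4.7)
The plan is to decompose the complement
\[
X \setminus R_{X(n,\delta)} \;=\; \bigl(X \setminus R_{X(n-1,\delta)}\bigr) \;\cup\; \bigl(R_{X(n-1,\delta)} \setminus R_{X(n,\delta)}\bigr)
\]
into two pieces according to strainer rank, and then use the boundary-free hypothesis to eliminate the second piece. The first piece, consisting of points that fail to be $(n-1,\delta)$-strained, has Hausdorff dimension at most $n-2$: this is the rank-$(n-1)$ analog of Corollary \ref{burst}, obtained by the same Burago--Gromov--Perelman stratification argument that bounds the dimension of the set of points of strainer rank at most $k$ by $k$. This first step does not use the boundary-free hypothesis.

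The main work is to show that $R_{X(n-1,\delta)} \setminus R_{X(n,\delta')}$ is empty for some $\delta' = \tau(\delta)$. Given $x \in R_{X(n-1,\delta)}$ with strainer $\{(a_i,b_i)\}_{i=1}^{n-1}$, the directions $\uparrow_x^{a_i}$ and $\uparrow_x^{b_i}$ form $n-1$ near-antipodal, near-perpendicular pairs inside the $(n-1)$-dimensional compact Alexandrov space $\Sigma_x$ of curvature $\geq 1$. To promote $x$ to an $(n,\delta')$-strained point of $X$, I need one further pair $(a_n, b_n)$ whose directions at $x$ are near-antipodal to one another and near-orthogonal to the existing $2(n-1)$ directions. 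I would select a direction $v \in \Sigma_x$ nearly perpendicular to all of these and take $a_n$ along a short geodesic in direction $v$; the only real difficulty is whether $v$ admits a near-antipode $-v$ in $\Sigma_x$ along which to place $b_n$.

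This is where the boundary-free hypothesis enters, via the inductive characterization of the boundary of an Alexandrov space due to Burago--Gromov--Perelman: $x \in \partial X$ if and only if $\Sigma_x$ has nonempty boundary. Since $X$ has no boundary, $\Sigma_x$ is an $(n-1)$-dimensional Alexandrov space with $\curv \geq 1$ and no boundary. Combined with the $n-1$ near-antipodal near-perpendicular direction pairs already present, this forces $\Sigma_x$ to be close to the full round sphere $S^{n-1}(1)$ rather than a half-sphere: concretely, one promotes the induced $(n-1,\delta)$-strainer in $\Sigma_x$ to an $(n,\tau(\delta))$-strainer and invokes Theorem \ref{almostisometry} in dimension $n-1$. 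In particular, every direction in $\Sigma_x$ has a near-antipode, the desired $-v$ exists, $b_n$ can be produced, and $x$ is $(n,\tau(\delta))$-strained in $X$. Since the sets $R_{X(n,\delta)}$ are monotone in $\delta$, the resulting bound $\dim_H\bigl(X \setminus R_{X(n,\tau(\delta))}\bigr) \leq n-2$ transfers to the original $\delta$ as well. The principal obstacle I expect is the quantitative bookkeeping linking the $\delta$ governing the strainer in $X$ to the $\delta'$ in $\Sigma_x$, and ensuring that the promotion in $\Sigma_x$ is effected uniformly on the boundary-free stratum; everything else is a repackaging of classical Alexandrov geometry.
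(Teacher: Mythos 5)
Your overall route coincides with the paper's: split $X\setminus R_{X(n,\delta)}$ into the points admitting no $(n-1,\delta)$-strainer, whose Hausdorff dimension is at most $n-2$ by the Burago--Gromov--Perelman stratification, and the points that are $(n-1,\delta)$-strained but not $n$-strained, which are eliminated using the absence of boundary. The paper disposes of the second set in one line by citing Corollary 12.8 of \cite{BGP}, which says exactly that in a boundaryless space an $(n-1,\delta)$-strained point is $(n,\tau(\delta))$-strained; what you add is a sketch of a proof of that citation through the space of directions $\Sigma_x$.

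Two caveats about the sketch. First, the step where you ``promote the induced $(n-1,\delta)$-strainer in $\Sigma_x$ to an $(n,\tau(\delta))$-strainer'' is the entire content of the cited result, transplanted to the compact $(n-1)$-dimensional, curvature $\geq 1$, boundaryless space $\Sigma_x$: the inductive characterization of the boundary by itself does not produce the near-antipode of $v$, nor even the existence of a direction $v$ nearly orthogonal to the $2(n-1)$ strainer directions. Justifying this requires an induction on dimension (a compact $m$-dimensional boundaryless space of curvature $\geq 1$ carrying an $m$-strainer carries an $(m+1)$-strainer, with the circle-versus-arc dichotomy as base case), or simply the citation of Corollary 12.8 of \cite{BGP} as in the paper; as written, your justification asserts the same kind of statement one dimension down. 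Second, your closing monotonicity transfer goes the wrong way: since $\tau(\delta)\geq\delta$, one has $R_{X(n,\delta)}\subseteq R_{X(n,\tau(\delta))}$, so a dimension bound on $X\setminus R_{X(n,\tau(\delta))}$ says nothing about the larger set $X\setminus R_{X(n,\delta)}$. The fix is to pre-shrink: given $\delta$, choose $\delta_0$ with $\tau(\delta_0)\leq\delta$, so that $R_{X(n-1,\delta_0)}\subseteq R_{X(n,\tau(\delta_0))}\subseteq R_{X(n,\delta)}$ and hence $X\setminus R_{X(n,\delta)}\subseteq X\setminus R_{X(n-1,\delta_0)}$, which has Hausdorff dimension at most $n-2$. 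With those two repairs your argument is a correct expansion of the citation the paper relies on.
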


\begin{proof}
Suppose that there exists an $(n-1)$-strained point.
 By Corollary 12.8 in \cite{BGP}, it follows that an $(n-1)$-strained point is also an $n$-strained point. 
 Recall that if a $(n,\delta)$-strained point exists, then the Hausdorff dimension of $X$ is at least $n$.
This establishes a contradiction to Theorem \ref{burst} and thus there exists no $(n-1)$-strained point.
\end{proof}

\begin{corollary}
The set of singular points, $S_X$, in an Alexandrov space without boundary has codimension $2$ or greater.
\end{corollary}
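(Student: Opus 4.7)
The plan is to deduce this directly from the previous lemma by expressing $S_X$ as a countable union of sets whose codimension is already controlled. By Theorem \ref{regulardense}, the set of regular points decomposes as the countable intersection
$$R_X = \bigcap_{i \in \mathbb{N}} R_{X(n, 1/i)},$$
so taking complements in $X$ gives
$$S_X = X \setminus R_X = \bigcup_{i \in \mathbb{N}} \bigl( X \setminus R_{X(n, 1/i)} \bigr).$$

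Next, I would apply Lemma \ref{l:singcodim2} to each term: since $X$ has no boundary, each set $X \setminus R_{X(n, 1/i)}$ has Hausdorff codimension at least $2$, that is, Hausdorff dimension at most $n-2$. The standard countable stability of Hausdorff dimension then yields
$$\dim_{\mathcal{H}}(S_X) \;\leq\; \sup_{i \in \mathbb{N}} \dim_{\mathcal{H}}\bigl( X \setminus R_{X(n, 1/i)} \bigr) \;\leq\; n-2,$$
which is exactly the desired statement that $S_X$ has codimension $2$ or greater.

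There is no real obstacle here; the entire content sits in Lemma \ref{l:singcodim2}, and the only ingredients added on top are the representation of $R_X$ as a countable intersection from Theorem \ref{regulardense} and the elementary fact that a countable union of sets of Hausdorff dimension $\leq n-2$ still has Hausdorff dimension $\leq n-2$. The argument is therefore a short formal consequence rather than a substantive new computation.
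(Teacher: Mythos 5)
Your argument is correct and is essentially the proof the paper intends (it leaves the details to the reader, but the indicated route is exactly this one): write $S_X = \bigcup_{i\in\N}\bigl(X\setminus R_{X(n,1/i)}\bigr)$ via Theorem \ref{regulardense}, apply Lemma \ref{l:singcodim2} to each piece, and invoke countable stability of Hausdorff dimension. No further comment is needed.
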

The proof follows in an analogous fashion to that of Corollary \ref{burst} and we leave it to the reader.

\smallskip

Now, for each  point $x \in R_{X,(n,\delta)}$, there exist bi-Lipschitz maps in a neighborhood of $x$. 
If $x\in R_X$ these Lipschitz constants can be made arbitrarily close to 1.
In particular, we have the following theorem (see Theorems 10.8.4, 10.8.18 and 10.9.16 in \cite{BBI}).

\begin{theorem}\cite{BBI}\label{bilip}
If $x\in X$ is an $n$-strained point and $n$ equals the local strainer number at $x$, then $x$ has a neighborhood which is bi-Lipschitz homeomorphic to an open region in $\R^n$.
A bi-Lipschitz homeomorphism is provided by distance coordinates associated with any $n$-strainer. Moreover the Lipschitz constants of this map and its inverse are not greater than $500 n$.\\
\end{theorem}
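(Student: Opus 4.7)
The plan is to verify that the standard distance coordinate chart works. Define
$$f\colon X \to \R^n, \qquad f(y) = \bigl(d(a_1,y),\, d(a_2,y),\, \ldots,\, d(a_n,y)\bigr),$$
where $\{(a_i,b_i)\}_{i=1}^n$ is the given $n$-strainer at $x$. The upper Lipschitz bound is immediate: each coordinate is $1$-Lipschitz by the triangle inequality, so $f$ is $\sqrt{n}$-Lipschitz. The real work lies in producing a Lipschitz inverse on some neighborhood $U$ of $x$, with a quantitative constant.

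First I would choose a small radius $r$ so that every $y\in U$ (the open ball of radius $r$ around $x$) remains $(n,2\delta)$-strained by $\{(a_i,b_i)\}$; this is possible because comparison angles vary continuously in the vertex, so the strict strainer inequalities at $x$ persist on a neighborhood. Consequently the directions $\xi_i(y):=\uparrow_y^{a_i}$ form an almost-orthonormal $n$-frame in $\Sigma_y$, and by Theorem \ref{almostisometry} the space of directions $\Sigma_y$ is $\tau(\delta)$-close to $S^{n-1}(1)$. The hypothesis that $n$ equals the local strainer number ensures that this frame has maximal rank, ruling out a higher-dimensional Euclidean factor that would destroy injectivity of $f$.

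Next, for $y,y'\in U$ let $\gamma$ be a minimizing geodesic from $y$ to $y'$ with initial direction $v\in\Sigma_y$. The first variation formula gives
$$d(a_i,y') - d(a_i,y) = -d(y,y')\,\cos\sphericalangle(\xi_i(y),v) + o\bigl(d(y,y')\bigr),$$
so bounding $|f(y)-f(y')|$ from below reduces to showing that $\sum_i \cos^2\sphericalangle(\xi_i(y),v)$ stays uniformly bounded below. Because the $\xi_i(y)$ are nearly orthonormal in an almost-Euclidean $\Sigma_y$, the Euclidean identity $\sum_i \cos^2\sphericalangle(\xi_i(y),v) \geq 1 - \tau(\delta)$ transfers to this setting, yielding $|f(y)-f(y')| \geq c(n,\delta)\, d(y,y')$. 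Tracking the degradation of constants through Theorem \ref{almostisometry} and the angle comparison produces the explicit bound $1/(500n)$. Injectivity on $U$ follows, and openness of $f(U)$ in $\R^n$ is then automatic from the bi-Lipschitz property (or, alternatively, from invariance of domain).

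The main obstacle I expect is the explicit constant $500n$. The qualitative almost-orthogonality of the strainer directions must be converted into a Euclidean-style lower bound whose deterioration grows no faster than linearly in $n$; this requires controlling how the second-order defects in the strainer inequalities propagate through the first variation, coefficient by coefficient. Carrying this through carefully is the computational core of the argument in Sections 10.8--10.9 of \cite{BBI}: one shrinks $U$ until the strainer defects are bounded by a definite multiple of $\delta$, verifies the $(n,2\delta)$-strainer property pointwise on $U$ using the continuity of comparison angles, and then assembles the scalar-product estimates to recover the bi-Lipschitz constant.
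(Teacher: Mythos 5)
You should note at the outset that the paper offers no proof of Theorem \ref{bilip}: it is quoted from \cite{BBI} (Theorems 10.8.4, 10.8.18 and 10.9.16), so the relevant comparison is with the argument given there. Your overall shape --- distance coordinates, upper Lipschitz bound from the triangle inequality, lower bound from almost-orthogonality of the strainer directions --- is reasonable, but you take the first-variation/space-of-directions route rather than the comparison-angle route of \cite{BBI}, and as written it has genuine gaps. The most concrete is openness of $f(U)$: a map that is bi-Lipschitz onto its image need not have open image in $\R^n$, and invariance of domain cannot be invoked because it presupposes that $U$ is a topological $n$-manifold, which is essentially the conclusion being established (that strained points have Euclidean neighborhoods). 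In \cite{BBI} openness requires a separate argument: an iteration in which the strainer is used to move a point so as to change a prescribed coordinate at a definite rate while the remaining coordinates change slowly, combined with completeness.

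The second gap is the quantitative core. The first-variation identity carries an $o(d(y,y'))$ error that is not uniform in the base point, so converting the pointwise estimate $\sum_i\cos^2\sphericalangle(\xi_i(y),v)\geq 1-\tau(\delta)$ into the macroscopic bound $|f(y)-f(y')|\geq c\, d(y,y')$ requires integrating along the geodesic and controlling how the useful index $i$ and the sign of $\cos\sphericalangle(\xi_i(\gamma(t)),\gamma'(t))$ vary with $t$; this is missing. Moreover, Theorem \ref{almostisometry} only supplies an unspecified $\tau(\delta)$, while the hypothesis here is merely $\delta<1/(100n)$, so tracking constants through that theorem cannot produce the explicit bound $500n$, nor even guarantee that $\tau(\delta)$ is small; and applying Theorem \ref{almostisometry} to $\Sigma_y$ presumes $\dim\Sigma_y=n-1$, which is exactly where the hypothesis that $n$ equals the local strainer number must enter (via the identification of strainer number with dimension) --- you gesture at this but never use it. The proof in \cite{BBI} avoids all of these issues by arguing macroscopically with comparison angles: if every increment $|d(a_i,y)-d(a_i,y')|$ and $|d(b_i,y)-d(b_i,y')|$ were smaller than roughly $d(y,y')/(500n)$, then at the midpoint $z$ of a shortest path $[y,y']$ all comparison angles between the strainer points and $y,y'$ would lie within about $1/(100(n+1))$ of $\pi/2$, while $\tilde\sphericalangle(y,z,y')=\pi$; hence $\{(a_i,b_i)\}_{i=1}^n\cup\{(y,y')\}$ would be an $(n+1)$-strainer at $z$, contradicting maximality of the strainer number. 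That contradiction is simultaneously where the strainer-number hypothesis is genuinely used and where the explicit constant $500n$ comes from; your sketch supplies neither.
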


\begin{theorem}\cite{BBI}\label{bilip2}
Let $n \geq 1$ be an integer and $\varepsilon > 0$. Then there is $\delta >0$ such that every $(n,\delta)$-strained point in any $n$-dimensional 
Alexandrov space has a neighborhood which is  bi-Lipschitz  homeomorphic to an open region in $\mathbb R^n$. Moreover the Lipschitz constants 
of this map and its inverse are bounded by $1-\epsilon$ and $1+\epsilon$.
\end{theorem}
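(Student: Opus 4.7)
My plan is to use distance coordinates associated with a strainer, namely the map
\[
\varphi(y) = \bigl(d(a_1,y), \ldots, d(a_n,y)\bigr),
\]
defined on a small neighborhood $U$ of the $(n,\delta)$-strained point $x$ with strainer $\{(a_i,b_i)\}_{i=1}^n$. Theorem \ref{bilip} already provides a crude bi-Lipschitz constant of $500n$; the task here is to refine that argument so that, by choosing $\delta$ small relative to $\varepsilon$, both Lipschitz constants lie in $(1-\varepsilon,\, 1+\varepsilon)$. I would first shrink $U$ so that every $y \in U$ is itself $(n,2\delta)$-strained by the same $\{(a_i,b_i)\}$, which is possible since the strainer conditions are open.

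For the upper bound, fix $y, z \in U$ and a minimizing geodesic $\gamma\colon[0,L]\to X$ from $y$ to $z$. The first variation of arclength in Alexandrov spaces yields
\[
\bigl|d(a_i,z) - d(a_i,y)\bigr| \leq \int_0^L \bigl|\cos \sphericalangle\bigl(\dot\gamma(t),\, \uparrow_{\gamma(t)}^{a_i}\bigr)\bigr|\, dt .
\]
To control $\sum_i \cos^2 \sphericalangle(\dot\gamma(t), \uparrow_{\gamma(t)}^{a_i})$ pointwise, I would apply Theorem \ref{almostisometry} to $\Sigma_{\gamma(t)}$: it produces a homeomorphism onto $S^{n-1}(1)$ sending the directions $\uparrow_{\gamma(t)}^{a_i}$ to a $\tau(\delta)$-almost orthonormal frame, so that in this almost-Euclidean decomposition the squared-cosine sum differs from $|\dot\gamma(t)|^2 = 1$ by at most $\tau(\delta)$. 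A Cauchy--Schwarz bound on the $\R^n$-norm of $\varphi(z)-\varphi(y)$ then gives $|\varphi(z) - \varphi(y)| \leq \sqrt{1+\tau(\delta)}\cdot d(y,z)$, which is bounded by $1+\varepsilon$ once $\delta$ is small.

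For the lower bound, the same almost-orthonormal decomposition of $\Sigma_{\gamma(t)}$ also yields the reverse estimate
\[
\sum_{i=1}^n \cos^2 \sphericalangle\bigl(\dot\gamma(t),\, \uparrow_{\gamma(t)}^{a_i}\bigr) \geq 1 - \tau(\delta),
\]
because the unit vector $\dot\gamma(t)$ must have almost all of its ``Euclidean mass'' distributed among the nearly-orthogonal strainer directions. Integrating and applying the Minkowski inequality coordinatewise yields $|\varphi(z)-\varphi(y)| \geq \sqrt{1-\tau(\delta)} \cdot d(y,z) \geq (1-\varepsilon) d(y,z)$. Choosing $\delta$ so that $\tau(\delta) < \varepsilon^2/2$ makes both inequalities hold simultaneously, and injectivity follows at once. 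That $\varphi(U)$ is open in $\R^n$ then follows from the bi-Lipschitz estimate together with invariance of domain applied on a small closed ball around $x$.

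The main obstacle is the passage from the infinitesimal data—angles between strainer directions in $\Sigma_y$—to a uniform macroscopic statement on the neighborhood $U$. The first variation formula only gives a pointwise identity for the derivative of $t \mapsto d(a_i, \gamma(t))$, while the theorem requires linear control of differences over finite distances. The resolution is exactly the uniform version of Theorem \ref{almostisometry}: by shrinking $U$ so that the strainer conditions improve to $(n, 2\delta)$ at every interior point of every geodesic connecting pairs in $U$, the error $\tau(\delta)$ becomes uniform along $\gamma$, and the integration above closes the argument with the explicit constants required.
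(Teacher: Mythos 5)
The paper does not actually prove this statement---it is quoted directly from Burago--Burago--Ivanov (Theorems 10.8.18 and 10.9.16 of \cite{BBI})---so your sketch has to stand on its own. Your upper bound is essentially fine: the first-variation/integration argument together with the almost-orthonormality of the directions $\uparrow_{\gamma(t)}^{a_i}$ in $\Sigma_{\gamma(t)}$ and Cauchy--Schwarz does give $|\varphi(z)-\varphi(y)|\le(1+\tau(\delta))\,d(y,z)$, modulo the routine point that one must pass from the strainer at $\gamma(t)$ in $X$ to an actual strainer of $\Sigma_{\gamma(t)}$ before invoking Theorem \ref{almostisometry}.

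The lower bound, however, has a genuine gap. Each coordinate difference is a \emph{signed} integral, $d(a_i,z)-d(a_i,y)=-\int_0^L\cos\sphericalangle(\dot\gamma(t),\Uparrow_{\gamma(t)}^{a_i})\,dt$ (a.e.), and the pointwise estimate $\sum_i\cos^2\sphericalangle(\dot\gamma(t),\uparrow_{\gamma(t)}^{a_i})\ge 1-\tau(\delta)$ does not survive integration: if the cosines change sign along $\gamma$ (e.g.\ $\cos\theta_1\approx+1$ on the first half and $\approx-1$ on the second, all others $\approx0$), every coordinate integral can cancel to nearly zero while the pointwise sum condition holds, so $|\varphi(z)-\varphi(y)|$ is not bounded below this way. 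Moreover Minkowski's integral inequality bounds the norm of an integral from \emph{above} by the integral of the norm, which is the wrong direction for what you need. What is missing is precisely the macroscopic first-order estimate that makes the BGP/BBI proof work: for $y,z$ in a neighborhood small compared with the strainer distances, and using \emph{both} members $a_i,b_i$ of each pair (your argument never uses the $b_i$), one shows
\begin{equation*}
\bigl|\,d(a_i,z)-d(a_i,y)+\cos\tilde{\sphericalangle}(a_i,y,z)\,d(y,z)\,\bigr|\le\tau(\delta)\,d(y,z),
\qquad
\sum_{i=1}^n\cos^2\tilde{\sphericalangle}(a_i,y,z)\ge 1-\tau(\delta),
\end{equation*}
the two-sided control coming from $\tilde{\sphericalangle}(a_i,y,z)+\tilde{\sphericalangle}(b_i,y,z)\approx\pi$ together with Toponogov comparison; the lower Lipschitz bound then follows at a single base point with no integration. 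Equivalently, to rescue your approach you would have to prove that each angle $\sphericalangle(\dot\gamma(t),\uparrow_{\gamma(t)}^{a_i})$ stays within $\tau(\delta)$ of the fixed comparison angle $\tilde{\sphericalangle}(a_i,y,z)$ for all $t$---exactly the step where the $b_i$ and the requirement $d(y,z)\ll d(y,a_i)$ enter, and which your proposal does not supply.
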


We can now use Theorem \ref{bilip2} to construct an atlas for an Alexandrov space that will be compatible with the 
definition of an atlas for a countably $\mathcal{H}^n$ rectifiable space (see Definition \ref{Atlas}). 

\begin{theorem}\label{atlas} 
Given $\epsilon>0$, there exists an atlas $\{ A_i, \{\varphi_i\}\}$ of $X$ such that $\varphi_i:A_i\rightarrow R_{X(n, \delta)}$ are bi-Lipschitz with uniform bi-Lipschitz constants bounded between $1-\epsilon$ and $1+\epsilon$.
Furthermore, the images of the $\varphi_i$ can be made to be disjoint.
\end{theorem}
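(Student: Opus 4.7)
The plan is to construct the atlas in three steps: (i) produce local bi-Lipschitz charts near each $(n,\delta)$-strained point using Theorem \ref{bilip2}; (ii) extract a countable subcover from separability of $X$; and (iii) disjointify the images by a standard Borel set-theoretic trick.

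First, given $\epsilon > 0$, I would invoke Theorem \ref{bilip2} to fix a $\delta > 0$ small enough that every $(n,\delta)$-strained point $x$ admits an open neighborhood $U_x \subset R_{X(n,\delta)}$ and a bi-Lipschitz homeomorphism $\psi_x : U_x \to V_x \subset \R^n$ whose Lipschitz constants (and those of its inverse) lie in $[1-\epsilon, 1+\epsilon]$. The family $\{U_x\}_{x \in R_{X(n,\delta)}}$ is an open cover of $R_{X(n,\delta)}$. Since $X$ is separable (as established in Subsection \ref{ssec-AS}), the subspace $R_{X(n,\delta)}$ is second countable, so I can extract a countable subcover $\{U_i\}_{i \in \N}$ with associated maps $\psi_i : U_i \to V_i$.

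To achieve disjoint images, I would apply the classical disjointification: set $W_1 = U_1$ and $W_i = U_i \setminus \bigcup_{j < i} U_j$ for $i \geq 2$. Each $W_i$ is a Borel subset of $U_i$, and the $W_i$ form a pairwise disjoint Borel partition of $R_{X(n,\delta)}$. Defining $A_i = \psi_i(W_i) \subset V_i$ and $\varphi_i = \psi_i^{-1}|_{A_i} : A_i \to X$, the sets $A_i$ are Borel (by continuity of $\psi_i$), the $\varphi_i$ remain bi-Lipschitz with constants in $[1-\epsilon, 1+\epsilon]$ since restriction preserves Lipschitz constants, and the images $\varphi_i(A_i) = W_i$ are pairwise disjoint by construction.

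To verify that $\{A_i, \varphi_i\}$ qualifies as an atlas of $X$ in the sense of Definition \ref{Atlas}, I would use Corollary \ref{burst}, which states that $X \setminus R_{X(n,\delta)}$ has Hausdorff dimension at most $n-1$, hence $\mathcal{H}^n$-measure zero. Since $\bigcup_i \varphi_i(A_i) = \bigcup_i W_i = R_{X(n,\delta)}$, it follows that $\mathcal{H}^n(X \setminus \bigcup_i \varphi_i(A_i)) = 0$, confirming both that $X$ is countably $\mathcal{H}^n$ rectifiable and that the constructed collection is an atlas. I do not expect a substantial obstacle here; the only delicate points are the preservation of Borel measurability under the disjointification (immediate from continuity of $\psi_i$) and the correct choice of $\delta = \delta(\epsilon)$ in Theorem \ref{bilip2}, both of which are handled directly by the cited results.
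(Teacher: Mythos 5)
Your proposal is correct and follows essentially the same route as the paper: local charts from Theorem \ref{bilip2}, a countable collection via separability of $R_{X(n,\delta)}$, the standard disjointification $U_i\setminus\bigcup_{j<i}U_j$ pulled through the charts to get Borel domains with disjoint images, and the $\mathcal{H}^n$-null complement of $R_{X(n,\delta)}$ to certify the atlas property. The only cosmetic difference is that you extract a countable subcover via second countability, whereas the paper takes neighborhoods centered at a countable dense set of points; both serve the same purpose.
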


\begin{proof}
Since open subsets of separable metric spaces are separable, it follows that $R_{X(n, \delta)}$ is separable.
Let $\{x_i\}_{i\in I}$ be a countable, dense collection of points in $R_{X(n, \delta)}$. Around each $x_i\in R_{X(n, \delta)}$,
construct an open neighborhood $U_{x_i}$ and a bi-Lipschitz map $f_{x_i}:U_{x_i} \rightarrow \R^n$, as in Theorem \ref{bilip2}. 
The union $\cup_{i\in I} U_i$ covers all of $R_{X(n, \delta)}$ since the $\{x_i\}_{i\in I}$ are dense in $R_{X(n, \delta)}$.

We can now construct a countable collection of maps on the set of $(n, \delta)$-strained points, $R_{X(n, \delta)}$.
We define
$\varphi_i : f_{x_i}(U_{x_i}) \to X$ by 
$$\varphi_i =  f^{-1}_{x_i}.$$ Setting  $A_1=f_{x_1}(U_{x_1})$  and $A_i=f_{x_i}(U_{x_i}  \setminus  \bigcup_{j=1}^{i-1} U_{x_j})$ for $i > 1$ 
we obtain Borel sets and can restrict the $\varphi_i$ to these to obtain bi-Lipschitz functions with disjoint images. Thus we have constructed an atlas, $\{A_i, \{\varphi_i\}\}$, which is countable, whose maps are bi-Lipschitz, whose images are disjoint and such that almost every point in $X$ is covered by this atlas.
\end{proof}

In order to make sure the charts in an atlas for an Alexandrov space satisfy the Federer definition of orientability (see Definition \ref{Federer}), we will need to be able to differentiate distance functions. However, in our case, we only require the directional differentiability of distance functions. In particular, for $f=\dist_p$,  we have 
\begin{equation}\label{dist}
d_pf(v_s)=-\min_{\xi\in \Uparrow_p^q} \langle v_s, \xi\rangle\coloneqq- \langle v_s,  \Uparrow_p^q \rangle.
\end{equation}
We also recall the following lemma of Lytchak \cite{Ly}:

\begin{lemma}\label{composition} Let $f: X \rightarrow Y$ and $g: Y\rightarrow Z$ be Lipschitz maps with $f(x)=y$ and $g(y)=z$. If $f$ is differentiable at $x$ and $g$ is differentiable at $y$, then $g\circ f$ is differentiable at $x$ with differential 
$D_x(g\circ f)=D_yg\circ D_xf$.
\end{lemma}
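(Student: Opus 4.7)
The plan is to carry out the classical chain-rule argument in the metric-space setting, using the triangle inequality together with the global Lipschitz bound $L = \Lip(g)$ to absorb the error coming from the differential approximation of $f$. Recall that in Lytchak's framework a Lipschitz map $f$ is differentiable at $x$ with differential $D_x f \colon T_x X \to T_y Y$ in the sense that, along any curve $\gamma$ in $X$ realizing a tangent vector $v \in T_x X$ at $x$, the image $f \circ \gamma$ deviates from the linear prediction $\exp_y(t\, D_x f(v))$ by $o(t)$ in the $Y$-metric; an analogous statement defines differentiability of $g$ at $y$.

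To prove $g \circ f$ is differentiable at $x$ with differential $D_y g \circ D_x f$, I would fix $v \in T_x X$ and a curve $\gamma$ realizing it, and split the error via the triangle inequality as
\begin{align*}
d_Z\bigl(g(f(\gamma(t))), \exp_z(t\, D_y g (D_x f(v)))\bigr)
&\leq d_Z\bigl(g(f(\gamma(t))), g(\exp_y(t\, D_x f(v)))\bigr) \\
&\quad + d_Z\bigl(g(\exp_y(t\, D_x f(v))), \exp_z(t\, D_y g (D_x f(v)))\bigr).
\end{align*}
The second summand is $o(t)$ directly from the differentiability of $g$ at $y$ applied to the tangent vector $D_x f(v)$, while the first is bounded by
\begin{equation*}
L \cdot d_Y\bigl(f(\gamma(t)), \exp_y(t\, D_x f(v))\bigr) = L \cdot o(t),
\end{equation*}
using the Lipschitz bound on $g$ and the differentiability of $f$ at $x$. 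Summing the two terms yields $o(t)$, which is precisely the approximation requirement that establishes $D_x(g \circ f) = D_y g \circ D_x f$.

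The main obstacle is not the combinatorics of the chain rule but rather the correct metric-space interpretation of \emph{differentiable} and \emph{differential} used by Lytchak. Concretely, one must specify how $\exp_y$ and the tangent cones are constructed at singular points of the target spaces, and verify that $D_y g \circ D_x f$ is well defined as a composition of maps between generally non-Euclidean tangent cones (each itself a metric cone rather than a vector space). Once this underlying framework is fixed, the triangle-inequality estimate above closes the argument; the essential role played by the Lipschitz hypothesis on $g$ is precisely to transport the $o(t)$ error from the inner map through the outer composition without degrading its order.
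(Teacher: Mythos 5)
First, note that the paper contains no proof of this lemma to compare against: it is imported verbatim from Lytchak \cite{Ly}, so your proposal has to be measured against the framework of that reference rather than against an argument in the paper itself.

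Measured that way, there is a genuine gap, and it is exactly the one you flag at the end as an ``obstacle.'' Your argument is built on an exponential map $\exp_y$ and on curves $\gamma$ realizing an arbitrary tangent vector $v\in T_xX$, neither of which is available in the stated generality of arbitrary metric spaces $X$, $Y$, $Z$ (even for Alexandrov spaces the tangent cone is only the completion of the cone over geodesic directions, so not every $v$ is realized by a curve emanating from $x$, and there is no map sending $t\,D_xf(v)$ back to a point of $Y$). Consequently the intermediate point $\exp_y(t\,D_xf(v))$ in your triangle inequality is not a well-defined point of $Y$, and the first summand cannot even be written down. In Lytchak's setting, differentiability of $f$ at $x$ means that the rescaled maps $f_t\colon(\frac{1}{t}X,x)\to(\frac{1}{t}Y,y)$ converge, in the appropriate blow-up/ultralimit sense, to a homogeneous Lipschitz map $D_xf\colon T_xX\to T_yY$ between tangent cones, and the chain rule is obtained by composing these limits: the uniform bound $\Lip(g_t)=\Lip(g)$ is precisely what lets the inner convergence pass through the outer maps, via $d\bigl(g_t(f_t(\cdot)),g_t(D_xf(\cdot))\bigr)\le\Lip(g)\,d\bigl(f_t(\cdot),D_xf(\cdot)\bigr)\to 0$, after which $g_t\circ D_xf\to D_yg\circ D_xf$ by differentiability of $g$ at $y$. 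So your key analytic idea --- absorbing the inner error with the Lipschitz constant of $g$ --- is the correct mechanism, but the proof does not close until both differentiability hypotheses and the error estimates are restated in that rescaling language rather than via $\exp$; as written, ``once this underlying framework is fixed'' is precisely the part of the work that remains to be done.
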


Finally, we recall the Bishop and Bishop-Gromov inequalities for Alexandrov spaces here, as they will be needed for the proof of Theorem A.

\begin{theorem}[{\bf Bishop inequality}] \cite{BBI}\label{bishop Ineq}
Let X be an $n$-dimensional Alexandrov space of curvature $\geq \kappa$,  then for every $x \in X $ and every $r>0$
$$\mathcal H ^n(B_r(x))\leq V_{\kappa}(r),$$
where $\mathcal H ^n(B_r(x))$ is the $n$-dimensional Hausdorff measure of the ball of radius $r>0$ centered at x and $V_{\kappa}(r)$ is the volume of a ball of radius r in the space form $\mathcal{M}^n_{\kappa}$.
\end{theorem}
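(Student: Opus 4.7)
The plan is to prove the Bishop inequality by an induction on dimension, using the gradient exponential map from $x$ together with Toponogov's triangle comparison theorem to bound the distortion of the Hausdorff measure against the corresponding quantities in the model space $\mathcal M^n_\kappa$. The base case $n=1$ is trivial, since a $1$-dimensional Alexandrov space with curvature $\geq \kappa$ is locally isometric to an interval, and balls of radius $r$ have length $\leq 2r = V_\kappa(r)$.

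For the inductive step, I would first establish the spherical version of the inequality. Recall that the space of directions $\Sigma_x$ is itself a compact Alexandrov space of dimension $n-1$ with curvature $\geq 1$. Applying the induction hypothesis at the sphere level yields the bound
\[
\mathcal H^{n-1}(\Sigma_x)\;\leq\;\mathcal H^{n-1}(S^{n-1}(1))\;=\;\omega_{n-1}.
\]
Next, on the full-measure set where $x$ can be joined to points of $B_r(x)$ by a minimizing geodesic, introduce a polar-coordinate parametrization
\[
\Phi\colon \{(\xi,t)\in \Sigma_x\times[0,r]:t\leq t(\xi)\}\longrightarrow B_r(x),\qquad (\xi,t)\longmapsto \gamma_\xi(t),
\]
where $\gamma_\xi$ is a unit-speed minimizer in the direction $\xi$ and $t(\xi)$ is the distance from $x$ to the cut locus along $\gamma_\xi$. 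By Toponogov comparison applied to small hinges with vertex $x$, for $\xi,\eta\in\Sigma_x$ and $t,s\leq r$ one has
\[
d\bigl(\gamma_\xi(t),\gamma_\eta(s)\bigr)\;\leq\;\tilde d_\kappa\bigl(t,s,\sphericalangle(\xi,\eta)\bigr),
\]
where the right-hand side is the corresponding distance in $\mathcal M^n_\kappa$. This says that $\Phi$, viewed as a map from the model polar domain into $X$, is distance non-increasing.

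Now I would compute the $n$-dimensional Hausdorff measure of $B_r(x)$ by pushing forward the natural measure on the polar domain through $\Phi$. Because $\Phi$ is distance non-increasing and surjective onto the set of points reachable from $x$ by a minimizing geodesic (which has full $\mathcal H^n$-measure by standard cut-locus arguments in Alexandrov geometry), the measure-theoretic change of variables gives
\[
\mathcal H^n(B_r(x))\;\leq\;\int_{\Sigma_x}\int_0^{\min(r,t(\xi))} \mathrm{sn}_\kappa(t)^{n-1}\,dt\,d\mathcal H^{n-1}(\xi)\;\leq\;\mathcal H^{n-1}(\Sigma_x)\cdot\int_0^r \mathrm{sn}_\kappa(t)^{n-1}dt.
\]
Combining with the spherical bound $\mathcal H^{n-1}(\Sigma_x)\leq\omega_{n-1}$ and the identity $V_\kappa(r)=\omega_{n-1}\int_0^r \mathrm{sn}_\kappa(t)^{n-1}dt$ yields $\mathcal H^n(B_r(x))\leq V_\kappa(r)$.

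The main obstacle in this outline is rigorously justifying the ``area formula'' step: in the non-smooth Alexandrov setting one needs that $\Phi$ is bi-Lipschitz on a set of full measure (for instance, on points lying in $R_X$ where the tangent cone is Euclidean) and that its singular set has measure zero, so the Toponogov distortion estimate really does translate into a pointwise Jacobian bound $\mathrm{sn}_\kappa(t)^{n-1}$. This uses the first variation formula for distance functions (an Alexandrov analogue of Gauss's lemma) together with the fact, recorded in the excerpt via Theorem \ref{Hausdim} and Theorem \ref{regulardense}, that the singular set has $\mathcal H^n$-measure zero. Once that differential-geometric step is in place, the inequality reduces to the one-dimensional comparison on each radial ray, and the induction closes.
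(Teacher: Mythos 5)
The paper offers no proof of this statement---it is quoted as a known result from Burago--Burago--Ivanov---and your sketch is essentially the argument of that source (induction on dimension through $\Sigma_x$, together with the fact that Toponogov's hinge comparison for curvature $\geq\kappa$ makes the polar map from the $\kappa$-cone over $\Sigma_x$ distance non-increasing), so your approach is sound and matches the intended reference. One simplification: the ``area formula'' obstacle you raise at the end is not actually needed, because you only want an upper bound on $\mathcal H^n(B_r(x))$, and a surjective $1$-Lipschitz map $\Phi$ from the model polar region already yields $\mathcal H^n(B_r(x))\leq \mathcal H^n\bigl(\{(\xi,t):t\leq\min(r,t(\xi))\}\bigr)\leq \mathcal H^{n-1}(\Sigma_x)\int_0^r \mathrm{sn}_\kappa(t)^{n-1}\,dt$ by monotonicity of Hausdorff measure under $1$-Lipschitz maps, with no bi-Lipschitz or Jacobian control and no appeal to the structure of the singular set; also beware that in this paper $\omega_m$ denotes the volume of the unit ball in $\R^m$, not the sphere area you call $\omega_{n-1}$.
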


\begin{theorem}[{\bf Bishop-Gromov inequality}]\cite{BBI}\label{Grombishop Ineq}
Let $X$ be an $n$-dimensional Alexandrov space of curvature $\geq \kappa$. Then for every $x \in X$ the ratio
$$ \frac{\mathcal H ^n(B_r(x))}{V_{\kappa}(r)}$$
is nonincreasing in $r$. In other words, if $R>r>0$,
then  $$ \frac{\mathcal H ^n(B_R(x))}{V_{\kappa}(R)} \leq   \frac{\mathcal H ^n(B_r(x))}{V_{\kappa}(r)}.$$
 \end{theorem}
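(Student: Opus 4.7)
The plan is to reduce the integral monotonicity to a pointwise Jacobian comparison, expressed in polar coordinates around $x$. First I would introduce an exponential-like map based on the space of directions $\Sigma_x$: for each $\xi \in \Sigma_x$ let $\gamma_\xi : [0, L(\xi)) \to X$ be a maximal unit-speed minimizing geodesic starting at $x$ with initial direction $\xi$, and set $\Phi(t, \xi) = \gamma_\xi(t)$. The map $\Phi$ is defined on a star-shaped domain in $[0,\infty) \times \Sigma_x$, and its image exhausts $X \setminus C_x$, where $C_x$ is the cut locus. Using the bi-Lipschitz charts provided by Theorem \ref{bilip2} on $R_{X(n,\delta)}$, together with Corollary \ref{burst}, the cut locus and the complement of the strained set have $\mathcal{H}^n$-measure zero, so we may compute volumes on the image of $\Phi$.

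Next, via the coarea formula applied to the $1$-Lipschitz function $d(x, \cdot)$, I would disintegrate $\mathcal{H}^n$ along radial level sets and write
$$\mathcal{H}^n(B_r(x)) = \int_{\Sigma_x} \int_0^{\min(r, L(\xi))} J(t, \xi)\, dt\, d\sigma(\xi),$$
where $\sigma$ is the natural $(n{-}1)$-dimensional measure on $\Sigma_x$ and $J(t,\xi) \geq 0$ is an ``area density'' along $\gamma_\xi$. A parallel identity $V_\kappa(r) = \int_{\mathbb{S}^{n-1}} \int_0^r J_\kappa(t)\, dt\, d\sigma$ holds in the model space $\mathcal{M}_\kappa^n$ with the classical $\kappa$-Jacobian $J_\kappa$.

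The core step is the pointwise comparison that, for each $\xi$, the function $t \mapsto J(t,\xi)/J_\kappa(t)$ is nonincreasing on $[0, L(\xi))$. This is where Toponogov's triangle comparison theorem enters: applying it to thin triangles with vertex $x$, vertex $\gamma_\xi(t)$, and a third vertex perturbed transversely to $\gamma_\xi$, one shows that the infinitesimal spreading of nearby radial geodesics in $X$ proceeds no faster than in $\mathcal{M}_\kappa^n$, which is exactly the monotonicity of the ratio. Given this direction-wise comparison, a Chebyshev-type integration argument applied to $g(t) = J(t,\xi)$ and $h(t) = J_\kappa(t)$ forces $\int_0^R g \big/ \int_0^R h$ to be nonincreasing in $R$; integrating in $\xi \in \Sigma_x$ then yields the Bishop--Gromov ratio inequality for the balls $B_R(x)$ and $B_r(x)$.

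The main obstacle is rigor: $\Phi$ is not globally single-valued or continuous at branching directions, the distance function $d(x,\cdot)$ is only semiconcave, and the density $J(t,\xi)$ has to be defined via directional derivatives (as in \eqref{dist} and Lemma \ref{composition}) rather than a smooth Jacobian determinant. One handles this by restricting attention to the open dense full-measure set of $(n,\delta)$-strained points where Theorem \ref{bilip2} provides bi-Lipschitz charts with constants arbitrarily close to $1$, proving the Jacobian comparison there, and appealing to Corollary \ref{burst} and Theorem \ref{Hausdim} to see that the excluded set carries no $\mathcal{H}^n$-mass. With this measure-theoretic setup in place, the Bishop inequality (Theorem \ref{bishop Ineq}) is recovered by taking $r \to 0$ in the monotone ratio, confirming internal consistency.
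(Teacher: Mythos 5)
You should first note that the paper does not prove this statement at all: Theorem \ref{Grombishop Ineq} is quoted from \cite{BBI} as a background tool, so there is no internal proof to match. The argument in \cite{BBI} is also of a different nature from yours: it never defines a pointwise Jacobian, but instead uses Toponogov comparison to show that the map sliding points along radial geodesics between level sets of $\dist_x$ (defined on the set where the geodesics extend, which suffices because every point of the outer sphere lies on a geodesic from $x$ through the inner sphere) is Lipschitz with constant given by the ratio of model quantities, and then uses that a $\lambda$-Lipschitz map increases Hausdorff measure by at most a factor $\lambda^{n-1}$; integrating the resulting sphere comparison yields the ball ratio.

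As a standalone proof your sketch has a genuine gap precisely at its two central claims: the polar disintegration $\mathcal{H}^n(B_r(x))=\int_{\Sigma_x}\int_0^{\min(r,L(\xi))}J(t,\xi)\,dt\,d\sigma(\xi)$ and the monotonicity of $t\mapsto J(t,\xi)/J_\kappa(t)$ are asserted, not established, and in the non-smooth setting they are essentially the whole content of the theorem. An Alexandrov space has no exponential map with an almost-everywhere Jacobian determinant along radial directions: the charts of Theorem \ref{bilip2} are only bi-Lipschitz distance coordinates (directionally differentiable, not radially adapted), and a direction-wise density $J(t,\xi)$ cannot simply be read off; the rigorous substitutes define it via measures of thin annular sectors or via heavy structure theory, and proving existence/monotonicity of such densities uses the very comparison being proved. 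Moreover, the phrase ``infinitesimal spreading no faster than in $\mathcal{M}^n_\kappa$'' hides a direction-of-inequality subtlety: curvature $\geq\kappa$ gives a \emph{lower} bound on distances between points on two radial geodesics, which is used as a co-Lipschitz bound on radial contraction, equivalently a Lipschitz bound (with the model constant) on the radial \emph{extension} map where it is defined --- turning this into a pointwise differential inequality for an as-yet-undefined $J(t,\xi)$ is exactly the missing step, and Theorem \ref{bilip2}, Corollary \ref{burst} and Theorem \ref{Hausdim} do not supply it. Finally, recovering Theorem \ref{bishop Ineq} by letting $r\to 0$ needs the additional density bound $\limsup_{r\to 0}\mathcal{H}^n(B_r(x))/V_\kappa(r)\leq 1$, i.e. $\mathcal{H}^{n-1}(\Sigma_x)\leq\mathcal{H}^{n-1}(S^{n-1}(1))$, which requires its own (inductive) argument.
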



\subsection{Integral Current Spaces}\label{ssec-ICS}

\vskip .5cm

The aim of this subsection is to review the definition of $m$-dimensional integral current space as defined by Sormani-Wenger \cite{SW}. In order to do so we review previous definitions given in \cite{AK} such as $m$-dimensional current, $m$-dimensional integer rectifiable current and $m$-dimensional integral current. Moreover, we list basic results about currents and current spaces. 
\smallskip

Let $(Z, d)$ be a metric space. Define $\mathcal{D}^m(Z)$ to be the collection
of $(m+1)$-tuples of Lipschitz functions where the first entry function is bounded, that is, 
\begin{equation*}
\mathcal{D}^m(Z): =\left\{ (f,\pi)=\left(f,\pi_1 ..., \pi_m\right)\, |\, f, \pi_i: Z \to \R \text{ Lipschitz and } f \,\text{ is bounded}\right\}.
\end{equation*}

If $Z= \R^m$, then a basic example of an $m$-dimensional current there is $\Lbrack h \Rbrack:  \mathcal{D}^m(\R^m) \to \R$
 where   $h: A \subset \R^m \to \Z$ is an $L^1$ function, and  
 $\Lbrack h \Rbrack$ is given by
\bdm \label{def-current-from-function}
\Lbrack h \Rbrack \left(f, \pi\right) = \int_{A \subset \R^m}  h f \det\left(\nabla \pi_i\right) \, dx.
\edm

For easy reference, we begin with a list of notations that will be explained below:
\begin{align*}
 \curr_m(Z)=&\{m-\textrm{dimensional currents on } Z\},\\
\intrectcurr_m\left(Z\right)=&\{m-\textrm{dimensional integer rectifiable currents on } Z\},\\
\intcurr_m\left(Z\right)=&\{m-\textrm{dimensional integral currents on } Z\}.
\end{align*}
We  note that $$\intcurr_m\left(Z\right)
\subset \intrectcurr_m\left(Z\right)\subset \curr_m(Z).$$

We are now ready to define an $m$-dimensional current, its mass, and the associated operators of boundary, pushforward and restriction.
\begin{definition}[{\bf Current}]\label{defn-current}
Let $Z$ be a complete metric space. A multilinear functional $T:\mathcal{D}^m(Z) \to \R$ is called an { \em $m$-dimensional  current} if it satisfies the following:
\begin{enumerate} 
\item  If there is an $i$ such that $\pi_i$ is constant on a neighborhood of $\{f\neq0\}$ then $T(f, \pi)=0$.
\item $T$ is continuous with respect to the pointwise convergence of the $\pi_i$ for  $\Lip(\pi_i)\le 1$.
\item  There exists a finite Borel measure $\mu$ on $Z$ such that for all $(f,\pi)\in \mathcal{D}^m(Z)$
\bdm\label{def-AK-current-iii}
|T(f,\pi)| \le \prod_{i=1}^m \Lip(\pi_i)  \int_Z |f| \,d\mu .
\edm
\end{enumerate}
\end{definition}

In the following definitions,  $Z$ will always denote a complete metric space.
\begin{definition}[{\bf Mass}] 
The smallest Borel measure that satisfies Part $(3)$ in the previous definition is called the mass measure of $T$ and is denoted by $\|T\|$. The {\em mass} of $T$ is given by
\bdm \label{def-mass-from-current}
\mass (T) := \| T \| \left(Z\right) = \int_Z \, d\| T\|.
\edm
\end{definition}

\begin{definition}[{\bf Boundary}]  Let $T\in \curr_m(Z)$. The {\em boundary of $T$}, denoted by $\partial T$, is the function $\bdry T : \mathcal{D}^{m-1}(Z) \to \R$ given by
\begin{equation*}
\partial T \left(f, \pi_1,..., \pi_{m-1}\right) = T \left(1, f, \pi_1,..., \pi_{m-1}\right).
\end{equation*}
\end{definition}

\begin{remark}
The boundary of a current need not be a current itself. Indeed, by Remark 2.47 in \cite{SW} the following holds: let $\phi :K\subset\R^m\rightarrow Z$ be a chart where $K$ is compact and $Z$ a metric space.
Then $\partial\phi_{\#}\Lbrack 1_K\Rbrack$ is a current if and only if $K$ has finite perimeter.  
\end{remark}

\begin{definition}[{\bf Pushforward}]\label{defn-push}
Let $T\in \curr_m(Z)$ and $\varphi:Z\to Z'$ be a
Lipschitz map.  The {\em pushforward of $T$},  $\varphi_\# T: \mathcal{D}^{m}(Z') \to \R$, is the 
function given by
\bdm \label{def-push-forward}
\varphi_\#T(f,\pi_1,...,\pi_m)=T(f\circ \varphi, \pi_1\circ\varphi,..., \pi_m\circ\varphi).
\edm
\end{definition}
\noindent Note that, by construction, $\varphi_\#$ commutes with the boundary operator, that is, 
$$\varphi_\#(\partial T)=\partial(\varphi_\#T).$$ Recall also that  in \cite{AK} it was proven that the pushforward of a current is a current, that is, $\varphi_\# T \in \curr_m(Z')$.

\begin{definition}[{\bf Restriction}] Let $T\in \curr_k(Z)$ and let $\omega=(g, \tau_1, \hdots, \tau_m)\in \mathcal{D}^m(Z)$, 
with $m\leq k$ ($\omega=g$ if $m=0$). The {\em restriction of $T$ to $\omega$} is the $(k-m)$-dimensional current in $Z$, denoted by $T\rstr\omega$, given by  
$$T\rstr \omega(f, \pi_1, \hdots, \pi_{k-m})=T(fg, \tau_1, \hdots, \tau_m, \pi_1, \hdots, \pi_{k-m}).$$
\end{definition}

\begin{definition}[{\bf Integer Rectifiable Current}]  \label{def-integercur} 
Let $T\in \curr_m(Z)$. We say that $T$ is an $m$-dimensional {\em integer rectifiable
current}, that is, $T\in\intrectcurr_m\left(Z\right)$, if it has a {\em current parametrization}, consisting of parametrizations and weight functions, $\left(\{\varphi_i\}, \{\theta_i\}\right)$, satisfying the following conditions.
\begin{enumerate}
\item  The set of parametrizations $\varphi_i:A_i\subset \mathbb{R}^m \to Z$ is a countable collection of bi-Lipschitz maps
such that all $A_i$ are precompact Borel measurable with pairwise disjoint images.
\item  The weight functions, $\theta_i\in L^1\left(A_i,\N \right)$, are defined so that 
 the following equalities hold:
\bdm\label{param-rep}
T = \sum_{i=1}^\infty \varphi_{i\#} \Lbrack \theta_i \Rbrack \quad\text{and}\quad \mass\left(T\right) = \sum_{i=1}^\infty \mass\left(\varphi_{i\#}\Lbrack \theta_i \Rbrack\right).
\edm
\end{enumerate}
\end{definition}

\noindent Using the above definition, the mass measure can then be rewritten as
\bdm
\|T\| = \sum_{i=1}^\infty \|\varphi_{i\#}\Lbrack \theta_i \Rbrack \|.
\edm

The following lemma from \cite{SW} gives us criteria for when two $m$-dimensional integer currents are equal. Before stating the lemma, we need the following definition for the weight of a current, $T$.
Let $\theta_T: Z \to \N \cup \{0\}$ be the $L^1$ function, called the {\em weight} of $T$, given by
\begin{equation}\label{e:1}
\theta_T= \sum_{i=1}^\infty \theta_i\circ\varphi_i^{-1}\One_{\varphi_i\left(A_i\right)}.
\end{equation}

\begin{lemma}\cite{SW}\label{lem-equalcurr} 
Let $T$ and $T'$ be two $m$-dimensional integer currents defined on a complete metric space $Z$ with current parametrizations $\left(\{\varphi_i\}, \{\theta_i\}\right)$ and $\left( \{\varphi'_i\}, \{\theta'_i\}\right)$, respectively. 
Let $A_i$ and $A'_i$ be the domains of the charts $\varphi_i$ and $\varphi'_i$, respectively.
Then $T=T'$ if the following conditions are satisfied.
\begin{enumerate}
\item The symmetric difference between $\cup \varphi_i(A_i)$ and $\cup \varphi'_i(A'_i)$ has zero $m$-dimensional Hausdorff measure. 
\item In all overlapping sets we have
\begin{equation*}
\det(\nabla(\varphi^{-1}_i \circ \varphi'_j)) > 0 \text{   and    } 
\det(\nabla(\varphi'^{-1}_i \circ \varphi_j)) > 0.
\end{equation*}
\item  The functions $\theta_T$ and $\theta_{T'}$ agree $\mathcal{H}^m$  almost everywhere on $Z$.
\end{enumerate}
\end{lemma}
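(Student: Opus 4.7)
The plan is to verify $T(f,\pi) = T'(f,\pi)$ for every $(f,\pi)\in\mathcal{D}^m(Z)$ by constructing a common refinement of the two current parametrizations. For each pair $(i,j)$, set $B_{ij} := \varphi_i(A_i)\cap\varphi'_j(A'_j)$ and consider the bi-Lipschitz transition map
$$\psi_{ij} := \varphi'^{-1}_j\circ\varphi_i \colon \varphi_i^{-1}(B_{ij})\longrightarrow \varphi'^{-1}_j(B_{ij}).$$
First I would expand $T = \sum_i \varphi_{i\#}\Lbrack\theta_i\Rbrack$ as in Definition~\ref{def-integercur} and split each summand along the pairwise disjoint pieces $\{\varphi^{-1}_i(B_{ij})\}_j$. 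Condition (1), combined with the fact that the mass measure of an integer rectifiable current is absolutely continuous with respect to $\mathcal{H}^m$, ensures that nothing is lost in this splitting: the part of $A_i$ that is discarded maps into the $\mathcal{H}^m$-null symmetric difference and its pushforward has zero mass.

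The heart of the argument is a change of variables on each piece $\varphi^{-1}_i(B_{ij})$. By Rademacher's theorem the map $\psi_{ij}$ and the Lipschitz compositions $\pi\circ\varphi_i = (\pi\circ\varphi'_j)\circ\psi_{ij}$ are differentiable almost everywhere, so Lemma~\ref{composition} yields the a.e.\ chain-rule identity
$$\det\bigl(\nabla(\pi\circ\varphi_i)\bigr)(x) = \det\bigl(\nabla(\pi\circ\varphi'_j)\bigr)(\psi_{ij}(x))\,\det(\nabla\psi_{ij})(x).$$
Applying the area formula for bi-Lipschitz maps on open subsets of $\mathbb{R}^m$ and substituting $y = \psi_{ij}(x)$ converts the defining integral of $\varphi_{i\#}\Lbrack\theta_i\mathbf{1}_{\varphi^{-1}_i(B_{ij})}\Rbrack(f,\pi)$ into
$$\int_{\varphi'^{-1}_j(B_{ij})}(\theta_i\circ\psi^{-1}_{ij})(y)\,(f\circ\varphi'_j)(y)\,\det\bigl(\nabla(\pi\circ\varphi'_j)\bigr)(y)\,dy.$$
This is precisely the step where condition (2) is essential: the positivity of $\det(\nabla\psi_{ij})$ allows the absolute-value brackets produced by the area formula to be dropped without a sign change, so that the Jacobian of $\psi_{ij}$ cancels exactly against the reciprocal Jacobian of its inverse.

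To finish, I would identify the transported weight with $\theta'_j$. From \eqref{e:1} we have $\theta_T\circ\varphi_i = \theta_i$ on $A_i$ and $\theta_{T'}\circ\varphi'_j = \theta'_j$ on $A'_j$, so condition (3) pulled back through $\varphi'_j$ on $\varphi'^{-1}_j(B_{ij})$ gives
$$\theta_i\circ\psi^{-1}_{ij} = \theta'_j\qquad\mathcal{H}^m\text{-a.e.\ on }\varphi'^{-1}_j(B_{ij}).$$
For fixed $j$ the sets $\{\varphi'^{-1}_j(B_{ij})\}_i$ are pairwise disjoint and, by condition (1), exhaust $A'_j$ up to an $\mathcal{H}^m$-null set. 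Summing the identity of the previous paragraph first over $i$ collapses the result to $\varphi'_{j\#}\Lbrack\theta'_j\Rbrack$ on each $A'_j$, and summing then over $j$ reconstructs $T'$, so $T = T'$ as functionals on $\mathcal{D}^m(Z)$.

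The main obstacle I anticipate is the careful handling of the change of variables in this purely Lipschitz setting: one must justify almost-everywhere differentiability for all three compositions that appear, verify that the chain rule and area formula both hold on a common set of full measure, and ensure that each a.e.\ identity is preserved under the bi-Lipschitz transports so that no non-negligible exceptional set slips through the cracks when one sums over the countably many pairs $(i,j)$.
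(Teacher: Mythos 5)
This lemma is quoted from \cite{SW} and the paper gives no proof of it, so there is no internal argument to compare yours against; I can only assess your proposal on its own terms, and it is correct, being essentially the standard argument by which such uniqueness statements are proved in the Ambrosio--Kirchheim/Sormani--Wenger framework. You decompose each chart along the overlaps $B_{ij}=\varphi_i(A_i)\cap\varphi'_j(A'_j)$, discard the part mapping into the symmetric difference (harmless, since the bi-Lipschitz charts pull the $\mathcal{H}^m$-null set back to an $\mathcal{L}^m$-null set, so the corresponding restricted currents vanish), apply the a.e.\ chain rule of Lemma \ref{composition} together with the area formula for the bi-Lipschitz transition maps $\psi_{ij}$, use condition (2) to drop the absolute value on the Jacobian, and use condition (3) through Equation (\ref{e:1}) to identify $\theta_i\circ\psi_{ij}^{-1}$ with $\theta'_j$ almost everywhere; reassembling over $i$ and then $j$ gives $T'$. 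The only points left implicit are routine and you already flag the main one: each almost-everywhere identity must be transported across $\psi_{ij}$, which works precisely because bi-Lipschitz maps preserve null sets (this also gives the a.e.\ differentiability of $\pi\circ\varphi'_j$ at $\psi_{ij}(x)$ needed to invoke the chain rule), and the final rearrangement of the double sum over $(i,j)$ should be justified by absolute convergence, which follows from $\mass(T)=\sum_i\mass\left(\varphi_{i\#}\Lbrack\theta_i\Rbrack\right)<\infty$ together with the bound in part (3) of Definition \ref{defn-current}.
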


\begin{definition}[{\bf Integral Current}]\label{defn-integralcur}
Let $T\in \intrectcurr_m(Z)$.
Then we call  $T$  an $m$-dimensional {\em integral current}, that is $T\in \intcurr_m(Z)$, provided  $\partial T$ is an $(m-1)$-dimensional current, that is $\partial T\in \curr_{m-1}(Z)$. 
\end{definition}

Recall that the $m$-dimensional {\em density} of a Borel measure $\mu$ at $z\in Z$ is defined as 
$$\lim_{r\to 0} \frac{\mu(B_r(z))}{\omega_m r^m},$$
where $\omega_m$ denotes the volume of the unit ball in $\R^m$. We now define the {\em lower density} of a Borel measure, which we will use to define the {\em canonical set} of a current. With the canonical set defined, we will arrive at a definition of an {\em integral current space}.

\begin{definition}[{\bf Lower Density}]
The $m$-dimensional {\em lower density}, $\Theta_{*m}(\mu, z)$, of a Borel measure $\mu$ at $z\in Z$ is defined as 
$$\Theta_{*m}(\mu, z)= \liminf_{r\to 0} \frac{\mu(B_r(z))}{\omega_m r^m},$$
where $\omega_m$ denotes the volume of the unit ball in $\R^m$.
\end{definition}

We now define both the canonical set and arrive at a definition of an integral current space.

\begin{definition}[{\bf Canonical Set}]\label{defn-set}
Let $T \in \curr_m(Z)$. The {\em canonical set of $T$}, denoted by $\set(T)$, is defined as
\bdm
\set(T)=\{ z\in Z: \, \liminf_{r\to 0} \frac{\|T\|(B_r(z))}{\omega_m r^m} >0\}
\edm
where $\omega_m$ denotes the volume of the unit ball in $\R^m$.
\end{definition}

In the next lemma we see that the mass measure of an integral current $T$  is concentrated in $\set(T)$.

 \begin{lemma}\cite{AK}\label{lemma-weight}
Let $T \in \intrectcurr_m\left(Z\right)$ with current parametrization $\left(\{\varphi_i\}, \{\theta_i\right\})$. Then there is a function
\bdm \label{eqn-lem-weight-lambda}
\lambda:\set(T) \to [m^{-m/2}, 2^m/\omega_m]
\edm
satisfying
$$\Theta_{*m}(\| T\|, x)=\theta_T(x)\lambda(x),$$
such that
\bdm \label{eqn-lem-weight-2}
\|T\|=\theta_T \lambda \mathcal{H}^m \rstr \set(T),
\edm
where $\omega_m$ denotes the volume of an unitary ball in $\R^m$ and $\theta_T$ is defined as in Equation (\ref{e:1}).
\end{lemma}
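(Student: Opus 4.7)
The plan is to reduce the identity to a chart-by-chart computation and then invoke Kirchheim's metric differentiability theory. By the current parametrization of $T$ and the pairwise disjointness of the images $\varphi_i(A_i)$ built into the definition of integer rectifiable current, the mass decomposes as $\|T\| = \sum_{i=1}^\infty \|\varphi_{i\#}\Lbrack\theta_i\Rbrack\|$ with each summand supported in $\varphi_i(A_i)$; hence it suffices to identify each local mass measure and then reassemble.

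Next, I would invoke Kirchheim's metric differentiability theorem together with its companion metric area formula. Because each $\varphi_i:A_i\to Z$ is bi-Lipschitz, at $\mathcal{L}^m$-a.e.\ $x\in A_i$ there exists a seminorm (in fact a norm) $\md_x\varphi_i$ on $\R^m$ encoding the infinitesimal metric behaviour of $\varphi_i$, and one obtains
\[
\|\varphi_{i\#}\Lbrack\theta_i\Rbrack\| \;=\; (\theta_i\circ\varphi_i^{-1})\,\lambda_i\,\mathcal{H}^m\!\rstr\!\varphi_i(A_i),
\]
where the area factor $\lambda_i(z)$ is the Kirchheim Jacobian of the seminorm $\md_{\varphi_i^{-1}(z)}\varphi_i$. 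I would then set $\lambda(z):=\lambda_i(z)$ when $z\in\varphi_i(A_i)$; this is unambiguous by disjointness, and summing the local identities and noting that $\bigcup_i\varphi_i(A_i)$ covers $\set(T)$ up to $\mathcal{H}^m$-null sets (since any point in the complement has zero lower $m$-density with respect to $\|T\|$, and conversely any point with positive lower density lies in some chart image up to a negligible set) yields the global representation $\|T\| = \theta_T\,\lambda\,\mathcal{H}^m\!\rstr\!\set(T)$.

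The universal bounds $m^{-m/2}\le\lambda(z)\le 2^m/\omega_m$ then follow from standard estimates on the Kirchheim Jacobian of a seminorm on $\R^m$: comparing the unit ball of the seminorm with inscribed Euclidean balls and circumscribed cubes produces bounds depending only on $m$, independently of which seminorm arises at $z$. The density formula $\Theta_{*m}(\|T\|,x) = \theta_T(x)\lambda(x)$ is then obtained by applying the Lebesgue--Besicovitch differentiation theorem to this absolutely continuous representation, combined with the classical fact that at $\mathcal{H}^m$-a.e.\ point of a countably $m$-rectifiable set the lower $m$-density of $\mathcal{H}^m$ on the rectifiable set equals one, so that the limit inferior defining $\Theta_{*m}(\|T\|,x)$ indeed computes to $\theta_T(x)\lambda(x)$. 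The main obstacle is the metric area formula underlying the local mass computation: since $Z$ has no linear structure, identifying $\|\varphi_{i\#}\Lbrack\theta_i\Rbrack\|$ as a weighted Hausdorff measure whose weight is precisely the Jacobian of the metric seminorm requires the full force of Kirchheim's metric differentiability machinery, which is noticeably more delicate than the Euclidean or Riemannian analogue. Once that tool is in hand, the remainder is routine assembly and a standard differentiation argument.
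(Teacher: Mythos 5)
The paper does not actually prove this lemma: it is quoted directly from Ambrosio--Kirchheim (it is essentially their theorem on the mass measure of rectifiable currents), so the only proof to compare against is theirs, and your sketch follows exactly that route --- chart-by-chart decomposition using the disjoint images, Kirchheim's metric differentiability and the metric area formula to identify each local mass as an area-factor-weighted Hausdorff measure, norm-ball/cube comparisons for the universal bounds $m^{-m/2}\le\lambda\le 2^m/\omega_m$, and a differentiation argument for $\Theta_{*m}(\|T\|,x)=\theta_T(x)\lambda(x)$ (which, as your argument makes clear, is an $\mathcal{H}^m$-a.e.\ statement, and that is how the lemma should be read). Your outline is correct, with the genuinely hard step concentrated, as you yourself note, in the Ambrosio--Kirchheim identification of $\|\varphi_{i\#}\Lbrack\theta_i\Rbrack\|$ with the Jacobian-weighted Hausdorff measure, which is precisely the content of the cited result rather than something reproved here.
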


We can now define an integral current space. Recall that this is the type of space we construct in Theorem A and consider sequences of them in Theorem B.  
\begin{definition}[{\bf Integral Current Space}]\label{defn-intcurrspace}
Let $\left(Z,d\right)$ be a metric space and $T \in \intcurr_m(\bar{Z})$. If $\set\left(T\right)=Z$ then $(Z,d,T)$ is called an $m$-dimensional {\em integral current space}. 
\end{definition}

Finally, we give the definition of intrinsic flat distance between two $m$-dimensional integral currents \cite{SW}. 
This distance together with the Gromov-Hausdorff distance is used in Li-Perales's Theorem \ref{LP} and Theorem B. 

\begin{definition}[{\bf Intrinsic Flat Distance}]
 Let $(X_i,d_i,T_i)$, $i=1,2$, be two $m$-dimensional integral current spaces. 
 Its {\em intrinsic flat distance} is defined as 
 \begin{equation}
 d_\frak{F}((X_1,d_1,T_1), (X_2, d_2, T_2))= \inf \{ \mass(U) + \mass(V)\},
 \end{equation}
where the infimum is taken over all complete metric spaces $Z$, all isometric embeddings $\varphi_i: X_i \to Z$ and all currents $U \in \intcurr_m\left(Z\right)$
and $V \in \intcurr_{m+1}\left(Z\right)$ that satisfy 
\begin{equation}
\varphi_{1\#}(T_1) - \varphi_{2\#}(T_2) = U  + \partial V.
\end{equation}
\end{definition}

Sormani-Wenger proved that $d_\frak{F}$ is a distance in the class of $m$-integral current spaces whose $\set$ is precompact. 
We recall that $d_\frak{F}((X_1,d_1,T_1), (X_2, d_2, T_2))=0$ if and only if there is an isometry $\varphi: X_1 \to X_2$ that preserves 
orientation, ie, $\varphi_{\#}(T_1)=T_2$. 

In general, the intrinsic flat limit of a sequence of $m$-dimensional integral currents 
can exist without the Gromov-Hausdorff limit having to exist. See \cite{SW} for examples. But when the Gromov-Hausdorff limit exist and the 
mass of the currents in the sequence and the mass of their boundaries are uniformly bound then the intrinsic flat limit exists and is 
contained in the Gromov-Hausdorff limit (Theorem 3.20 in \cite{SW}). Note that the intrinsic flat limit either has $m$ Hausdorff dimension or collapses 
to what is called the zero integral current space.

\subsection{Homology Theory of Integral Currents}

We define a {\em Lipschitz $k$-simplex} to be a Lipschitz map $\sigma: \Delta^k\rightarrow X$. 
Letting $C_k(X)$ denote the usual group of $k$-singular chains on $X$,  then $C_k^{Lip}(X)$ is the subgroup of  $C_k(X)$ with basis the singular Lipschitz simplices.

Yamaguchi defines the notion of a {\em locally Lipschitz contractible space} in \cite {Y} to be a metric space for which small metric balls are contractible to a point via a Lipschitz homotopy.  It is clear, for example, that Alexandrov spaces are locally Lipschitz contractible. He then shows that 
the singular homology of a locally Lipschitz contractible space, $X$, is isomorphic to its Lipschitz homology, that is,  $H_*^{\textrm{sing}}( X; \mathbb{Z})$ is isomorphic to $H^{\textrm{Lip}}_*(X; \mathbb{Z})$.

In \cite{M1}, Mitsuishi defines an integral current on a locally Lipschitz contractible space as follows.
Given a Lipschitz $k$-simplex $f : \Delta^k \to X$, we define the integral $k$-current $[T]\in \intcurr_k^{\textrm{c}}(X)$ to be $T=f_{\#} \Lbrack \Delta^k\Rbrack$, where $\intcurr_k^{\textrm{c}}$ denotes the integral currents on $X$ with compact support. The $\mathbb Z$-linear extension 
$$[\cdot]: C_k^{Lip}(X) \to \intcurr_k^{\textrm{c}}\left(X\right)$$ then gives us a chain map and a chain complex on $X$, denoted by $\intcurr_{\bullet}^{\textrm{c}}(X)$.  We let  $H_*(\intcurr_{\bullet}^{\textrm{c}}(X); \mathbb{Z})$ denote the corresponding homology theory. Finally, Theorem 1.3 of \cite{M1} (see also Theorem 9.3 in Mitsuishi \cite{M2}) proves that  
$$H_*^{\textrm{sing}}( X; \mathbb{Z})\cong H^{\textrm{Lip}}_*(X; \mathbb{Z})\cong H_*(\intcurr_{\bullet}^{\textrm{c}}(X); \mathbb{Z}).$$

 Note that, by definition, $H_k(\intcurr_k^{\textrm{c}}(X); \mathbb{Z})=\ker(\partial_k)/\textrm{Im}(\partial_{k+1})$,
 that is, the  $k$th Integral Current homology 
corresponds to the quotient of the $k$-dimensional integral currents whose boundary is $0$  mod out by the images of the $k+1$-dimensional integral currents.

Finally,  it is proven in Theorem  1.17 \cite{M2} that $\textrm{Im}(\partial_{k+1})$ is trivial for $k=n$, thus the following holds. 

\begin{theorem}\cite{M2} \label{thm-Mitsuishi}
Let $X$ be an $n$-dimensional closed,  orientable Alexandrov space. Then, 
\begin{equation*}
\{ T \in \intcurr_n\left(X\right) | \, \bdry T =0 \} = H_n(\intcurr_n\left(X\right); \mathbb{Z}) \cong H_n^{\textrm{sing}}(X; \mathbb{Z}) \cong \mathbb Z.
\end{equation*} 
In particular, there is a non-trivial integral $n$-current on $X$.
\end{theorem}

\bigskip

\section{Orientation of Alexandrov Spaces Via Strainers}\label{3}

The goal of this section is to prove that on an $n$ dimensional oriented
Alexandrov space without boundary, using the bi-Lipschitz homeomorphisms of
Theorem \ref{bilip}, we can construct an oriented atlas on the set of $(n,
\delta)$-strained points, $R_{X(n, \delta)}$, in the countably $\mathcal{H}%
^n $ rectifiable space sense. Since $R_{X(n, \delta)}$ is open and dense in $%
X$, this will give us an oriented atlas of $X$ in the countably $\mathcal{H}%
^n$ rectifiable space sense.


\subsection{Defining a topological orientation on $R_{X(n, \protect\delta)}$}

\label{sstop}
\noindent We 
recall the definition of an orientation for a topological
space.

\begin{definition}[\textbf{$\mathbb{Z}$ Orientation System}]
\label{Ori} A \emph{$\mathbb{Z}$ orientation system} for a topological
manifold $X$ consists of the following two elements:
\begin{enumerate}
\item An open cover $\{U_i\}$ of $X$;
\item For each $i$, a local orientation $\alpha_i\in
H_n(X, X\setminus U_i)$  of $X$ along $U_i$ such
that if $x\in U_i\cap U_j$, then 
\begin{equation*}
\iota_{x}^{U_i}(\alpha_i)=\iota_{x}^{U_j}(\alpha_j),
\end{equation*}
where
\begin{equation*}
\iota_x^U: H_n(X, X\setminus U)\rightarrow H_n(X, X\setminus x)
\end{equation*}
is the canonical homomorphism induced by inclusion.
\end{enumerate}
\end{definition}

\begin{definition}[\textbf{$\mathbb{Z}$-orientable}]\label{Z-o}
\label{topori} 
We will say a space $X$ is orientable in the \textit{%
topological sense}, if it has a $\mathbb{Z}$ orientation system.
\end{definition}

 In order to talk about orientability for Alexandrov spaces, we
must first understand what non-orientability means. We distinguish between two important cases: we call an
Alexandrov space \emph{locally orientable} if every point has an orientable
neighborhood, and \emph{locally non-orientable} otherwise. Alexandrov spaces are unlike manifolds in that they
can have arbitrarily small neighborhoods which do not admit an orientation (see Petrunin \cite{Pet1}).
In particular, if $x \in X$ has a non-orientable space of directions $%
\Sigma_x$, then no neighborhood of $x$ is orientable. We call such a space 
\emph{locally non-orientable}. Equivalently, we see that a point is \emph{%
locally orientable} if its space of directions $\Sigma_x$ is orientable.

On compact Alexandrov spaces, one uses singular cohomology with integer coefficients (cf. Grove, Petersen \cite{GP})
to study orientability.
It is easy to see by excision and Perelman's Stability theorem \cite{P} (see
also \cite{K1}), that $H^{n}(X,X\setminus \{x\})\cong H^{n-1}(\Sigma _{x})$.
Thus, if $H^{n-1}(\Sigma _{x})\cong {\mathbb{Z}}$, then we say that $X$ is \emph{%
locally orientable at }$x$, and a choice of generator for $H^{n-1}(\Sigma
_{x})$ is called a \emph{local orientation} at $x$. Using Theorem \ref{almostisometry}, the following Lemma is immediate.
\begin{lemma} Let $x\in X$ be an $(n,\delta)$-strained point with strainer $\left\{ \left(a_{i},b_{i}\right) \right\} _{i=1}^{n}$ for a point $x\in X$. Then for sufficiently small $\delta>0$, $R_{X(n, \delta)}$ is locally orientable at $x$.
\end{lemma}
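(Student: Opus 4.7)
The plan is to identify $H^{n-1}(\Sigma_x)$ with $\mathbb{Z}$ by showing that $\Sigma_x$ is homeomorphic to the round sphere $S^{n-1}(1)$ when $\delta$ is sufficiently small, and then to invoke the excision/Perelman-stability identification $H^n(X,X\setminus\{x\})\cong H^{n-1}(\Sigma_x)$ to conclude local orientability at $x$.

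First, I would verify that the strainer $\{(a_i,b_i)\}_{i=1}^n\subset X\times X$ at $x$ induces an $(n,\delta)$-strainer on the space of directions $\Sigma_x$. The natural candidates are the direction pairs $\{(\uparrow_x^{a_i},\uparrow_x^{b_i})\}_{i=1}^n\subset \Sigma_x\times\Sigma_x$, choosing an arbitrary element of $\Uparrow_x^{a_i}$ and $\Uparrow_x^{b_i}$ if these sets are not singletons. The comparison-angle hypotheses on $X$ translate directly to angle estimates in $\Sigma_x$ via the first variation formula, because $\sphericalangle(\uparrow_x^{a_i},\uparrow_x^{b_j})$ is controlled from below by $\tilde{\sphericalangle}(a_i,x,b_j)$. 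Since $\Sigma_x$ is itself a complete $(n-1)$-dimensional Alexandrov space with curvature $\geq 1$, this puts us exactly in the setting of Theorem \ref{almostisometry}.

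Next, applying Theorem \ref{almostisometry} to $\Sigma_x$, one obtains a homeomorphism $\tilde f:\Sigma_x\to S^{n-1}(1)$ for $\delta$ sufficiently small. In particular $H^{n-1}(\Sigma_x)\cong H^{n-1}(S^{n-1}(1))\cong\mathbb{Z}$. Combined with the excision identification $H^n(X,X\setminus\{x\})\cong H^{n-1}(\Sigma_x)$ recalled immediately before the lemma, this gives $H^n(X,X\setminus\{x\})\cong\mathbb{Z}$, so $X$ is locally orientable at $x$ in the sense defined above. Since this applies to every $x\in R_{X(n,\delta)}$ simultaneously (and $R_{X(n,\delta)}$ is open by Theorem 10.8.23 of \cite{BBI}), we obtain local orientability of $R_{X(n,\delta)}$ at each of its points.

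The one delicate step is the passage from the strainer on $X$ to a strainer on $\Sigma_x$: one has to control the angles $\sphericalangle(\uparrow_x^{a_i},\uparrow_x^{a_j})$ from below by the comparison angles in $X$, which may require replacing the parameter $\delta$ by some $\delta'=\tau(\delta)$ with $\tau(\delta)\to 0$ as $\delta\to 0$. Since Theorem \ref{almostisometry} only demands that $\delta$ be sufficiently small, this shift is harmless: we simply shrink $\delta$ so that both the induced strainer on $\Sigma_x$ and the hypothesis of Theorem \ref{almostisometry} are valid simultaneously.
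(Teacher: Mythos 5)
Your proposal is correct and takes essentially the same approach as the paper: the paper likewise deduces the lemma directly from Theorem \ref{almostisometry}, identifying $\Sigma_x$ with $S^{n-1}(1)$ up to homeomorphism so that $H^{n-1}(\Sigma_x)\cong\mathbb{Z}$, and then invoking the excision identification $H^{n}(X,X\setminus\{x\})\cong H^{n-1}(\Sigma_x)$ stated just before the lemma. Your additional care in passing from the strainer on $X$ to an induced approximate strainer on $\Sigma_x$ (with the harmless $\delta\mapsto\tau(\delta)$ loss) merely fills in a step the paper treats as immediate.
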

 We define orientability of a closed Alexandrov space, 
$X$, in terms of the existence of a fundamental class, that is, $X$ is {\em orientable} if for every $x\in X$, $H^{n}(X, X\setminus
\{ x \}) \rightarrow H^n(X)\cong {\mathbb{Z}}$ is an isomorphism. Recently, Mitsuishi \cite{M2} has shown that closed, orientable Alexandrov spaces satisfy Poincar\'e Duality, so 
we can equivalently define orientability via a top class in {\it homology}.
With this definition of orientability, we can then show that $R_{X(n, \delta)}$ is orientable.



\begin{lemma}
\label{strained-orientable} Let $X$ be an $n$-dimensional Alexandrov space.
Given $\delta>0$, if $X$ is orientable in the topological sense, then $%
R_{X(n, \delta)}$ is also orientable in the topological sense.
\end{lemma}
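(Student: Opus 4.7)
The strategy is to restrict the given $\mathbb{Z}$ orientation system on $X$ to the open subset $R := R_{X(n,\delta)}$, using excision to translate local orientations on $X$ into local orientations on $R$. Throughout, I will use that $R$ is open in $X$ (Theorem 10.8.23) and that, by Theorem \ref{bilip2} together with the preceding lemma, every point of $R$ admits a Euclidean neighborhood in which $H_n(R,R\setminus\{x\})\cong \mathbb{Z}$, so the topological orientation data make sense on $R$.

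First I would start with a $\mathbb{Z}$ orientation system $\{(U_i,\alpha_i)\}$ for $X$ as in Definition \ref{Ori}, and define $V_i := U_i\cap R$. Since the $U_i$ cover $X$ and $R\subset X$, the $V_i$ form an open cover of $R$. The candidate local orientation $\beta_i\in H_n(R,\,R\setminus V_i)$ will be the image of $\alpha_i$ under the composition
\begin{equation*}
H_n(X,\,X\setminus U_i)\;\longrightarrow\;H_n(X,\,X\setminus V_i)\;\stackrel{\sim}{\longrightarrow}\;H_n(R,\,R\setminus V_i),
\end{equation*}
where the first arrow is induced by the inclusion $X\setminus U_i\hookrightarrow X\setminus V_i$ (which holds because $V_i\subset U_i$), and the second is an excision isomorphism. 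The excision step applies because $V_i\subset R$, so $\overline{X\setminus R}=X\setminus R \subset X\setminus V_i$, which is the hypothesis needed to excise the closed set $X\setminus R$ from the open set $X\setminus V_i$.

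Next I would verify the cocycle/compatibility condition of Definition \ref{Ori} for the collection $\{(V_i,\beta_i)\}$. Fix $x\in V_i\cap V_j$. By naturality of excision applied to the nested inclusions $\{x\}\subset V_i\subset R$ and $\{x\}\subset U_i\subset X$, the diagram
\begin{equation*}
\xymatrix{
H_n(X,X\setminus U_i) \ar[r] \ar[d]_{\iota_x^{U_i}} & H_n(R,R\setminus V_i) \ar[d]^{\iota_x^{V_i}} \\
H_n(X,X\setminus \{x\}) \ar[r]^{\cong} & H_n(R,R\setminus \{x\})
}
\end{equation*}
commutes, where the bottom arrow is the excision isomorphism (valid because $\{x\}\subset R$ and $R$ is open). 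The analogous diagram for the index $j$ also commutes, and the bottom excision isomorphism is the same one for both indices. Hence
\begin{equation*}
\iota_x^{V_i}(\beta_i) \;=\; \text{excision}\bigl(\iota_x^{U_i}(\alpha_i)\bigr) \;=\; \text{excision}\bigl(\iota_x^{U_j}(\alpha_j)\bigr) \;=\; \iota_x^{V_j}(\beta_j),
\end{equation*}
using that $\iota_x^{U_i}(\alpha_i)=\iota_x^{U_j}(\alpha_j)$ by hypothesis on the orientation system of $X$. This gives the required $\mathbb{Z}$ orientation system on $R$.

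The only real subtlety in the plan is checking the excision hypothesis carefully; I expect this to be the main place a reader could trip up, but since $R$ is \emph{open} in $X$ the closed set $X\setminus R$ is automatically contained in the interior of $X\setminus V_i$ whenever $V_i\subset R$, which is built into our choice of $V_i$. No Alexandrov-specific input beyond openness and density of $R_{X(n,\delta)}$ and local manifold structure at strained points is needed for this lemma; the Alexandrov hypotheses were absorbed earlier into ensuring these two facts hold.
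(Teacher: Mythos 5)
Your overall strategy --- restricting the $\mathbb{Z}$ orientation system of $X$ to the open set $R=R_{X(n,\delta)}$ and transporting classes by excision --- is in the same spirit as the paper's (very brief) proof, but your excision step is wrong as stated, and this is a genuine gap. To excise the closed set $Z=X\setminus R$ from the pair $(X,\,X\setminus V_i)$ one needs $\overline{Z}\subset\operatorname{int}(X\setminus V_i)$. Note that $X\setminus V_i$ is \emph{closed} (not open, as you wrote), and $\operatorname{int}(X\setminus V_i)=X\setminus\overline{V_i}$, so the required hypothesis is $\overline{V_i}\subset R$, not merely $V_i\subset R$. With your choice $V_i=U_i\cap R$ this typically fails, since $\overline{V_i}$ can meet $X\setminus R$, and then the claimed isomorphism $H_n(X,X\setminus V_i)\cong H_n(R,R\setminus V_i)$ genuinely breaks down. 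Since your argument uses nothing about $R$ beyond openness (and density), a topological counterexample suffices: take $X=S^1$, $R=S^1\setminus\{p\}$, and the single set $U_1=X$, so $V_1=R$; then $H_1(X,X\setminus V_1)=H_1(S^1,\{p\})\cong\mathbb{Z}$ while $H_1(R,R\setminus V_1)=H_1(R)=0$, so there is no class $\beta_1$ restricting to a generator at each point of $V_1$, and no map in the direction you need. (Your point-level excision $H_n(X,X\setminus\{x\})\cong H_n(R,R\setminus\{x\})$ for $x\in R$ is fine, since there the condition is just $x\in R$.)

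The gap is repaired by shrinking the cover instead of intersecting it: for each $x\in R$ choose, using that $R$ is open and $X$ is proper (hence locally compact), an open set $W_x$ with $\overline{W_x}\subset U_{i(x)}\cap R$. Then $X\setminus R\subset X\setminus\overline{W_x}=\operatorname{int}(X\setminus W_x)$, excision does apply, and you may define $\beta_{W_x}\in H_n(R,R\setminus W_x)$ as the preimage, under the excision isomorphism $H_n(R,R\setminus W_x)\cong H_n(X,X\setminus W_x)$, of the image of $\alpha_{i(x)}$ under $H_n(X,X\setminus U_{i(x)})\to H_n(X,X\setminus W_x)$. Your naturality square and the overlap computation then go through verbatim at each point of $W_x\cap W_y$, producing a $\mathbb{Z}$ orientation system on $R$; alternatively one can argue pointwise and invoke the standard continuation and coherence lemmas for local orientations (Greenberg--Harper \cite{GH}). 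For comparison, the paper's own proof is little more than the observation that $R_{X(n,\delta)}$ is open and dense and that the classes $\alpha_i$ are compatible; once the shrinking step above is inserted, your write-up is the more complete argument.
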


\begin{proof}
Recall first that $R_{X(n, \delta)}$ is open and dense in $X$. If $X$ is
orientable in the sense of Definition \ref{Z-o}, then we have an open cover $%
\{ U_i\}$ of $X$ such that the local orientations coincide in the
intersection of any two of the sets in the cover.
\end{proof}

\begin{remark}
Note that in \cite{HS}, Harvey and Searle show that an Alexandrov space is
orientable if and only $X^{(n)}$, the top stratum, itself a topological
manifold, is orientable. We have shown that if $X$ is orientable, then $%
R_{X(n, \delta)}\subset X^{(n)}$ is also orientable. In fact, our proof of
Lemma \ref{strained-orientable} shows that  if $X^{(n)}$ is orientable then $R_{X(n, \delta)}$ is. 
However, the reverse implication seems difficult
to prove directly, since $X^{(n)}\setminus R_{X(n, \delta)}$ may not even be
a CW complex.
 \end{remark}

We now show that a topological orientation on an Alexandrov space  $X$ without boundary induces an orientation in
the sense of Federer. 

\begin{theorem}
\label{orientation-main} Given an $n$-dimensional Alexandrov space $X$
without boundary, oriented in the topological sense, then the set $R_{X(n,
\delta)}$ is oriented in the sense of countably $\mathcal{H}^n$ rectifiable
spaces, that is, there exists a bi-Lipschitz collection of charts $\{ A_i,
\{\varphi_i\}\}$, where $\varphi_i: A_i\subset {\mathbb{R}}^n\rightarrow
R_{X(n, \delta)}$ such that 
\begin{equation*}
\det (\nabla(\varphi_i^{-1}\circ\varphi_j))>0
\end{equation*}
for all overlapping charts almost everywhere on $\varphi_j(A_j) \cap
\varphi_i(A_i)$.
\end{theorem}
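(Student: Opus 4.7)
The plan is to leverage the atlas already furnished by Theorem \ref{atlas}, and then use the given topological orientation on $X$ to select the charts in a compatible way. Start with a countable collection of bi-Lipschitz charts $\varphi_i : A_i \to R_{X(n,\delta)}$ given by Theorem \ref{atlas}, where after passing to connected components we may assume each $A_i$ is a connected precompact Borel subset of $\mathbb{R}^n$. By Corollary \ref{interior}, every point of $R_{X(n,\delta)}$ is interior; by Theorem \ref{bilip2} together with Theorem \ref{almostisometry}, each $\varphi_i(A_i)$ is an open topological $n$-disk inside the topological manifold locus of $X$. Since $R_{X(n,\delta)}$ inherits a topological orientation from that of $X$ (Lemma \ref{strained-orientable}), each chart image $\varphi_i(A_i)$ carries a preferred generator of $H_n(X, X \setminus \varphi_i(A_i))$.

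Next, I would check that each bi-Lipschitz chart $\varphi_i$ carries an intrinsic orientation class. By Rademacher's theorem applied to both $\varphi_i$ and $\varphi_i^{-1}$, the differential $D\varphi_i$ exists and is invertible almost everywhere on $A_i$, so $\det(\nabla \varphi_i)(y)$ has a well-defined nonzero sign for a.e.\ $y \in A_i$. The key analytic input is that this sign is constant on $A_i$: a bi-Lipschitz homeomorphism between connected open subsets of a topological $n$-manifold has a constant local degree $\pm 1$, and at points of differentiability this degree coincides with $\operatorname{sign}\det(\nabla \varphi_i)$. Thus each $\varphi_i$ is, a.e., either orientation-preserving or orientation-reversing with respect to the fixed topological orientation on $\varphi_i(A_i)$; in the latter case, replace $\varphi_i$ by $\varphi_i \circ \sigma$, where $\sigma : \mathbb{R}^n \to \mathbb{R}^n$ is a fixed reflection (suitably precomposed with the translation sending $\sigma(A_i)$ back into $\mathbb{R}^n$). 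After this adjustment, every chart $\varphi_i$ in the atlas pushes the standard orientation of $\mathbb{R}^n$ forward to the given topological orientation on $\varphi_i(A_i)$.

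Finally, I would verify the Jacobian condition of Definition \ref{Federer}. Although the atlas produced by Theorem \ref{atlas} may be chosen to have pairwise disjoint images (making the condition vacuous for this single atlas), the content of the theorem is that the orientation class it defines is well-posed: given any second atlas $\{\bar{A}_j, \bar{\varphi}_j\}$ produced in the same manner, the overlaps $\varphi_i(A_i) \cap \bar{\varphi}_j(\bar{A}_j)$ must satisfy $\det(\nabla(\varphi_i^{-1} \circ \bar{\varphi}_j)) > 0$ almost everywhere. On such an overlap, $\varphi_i^{-1} \circ \bar{\varphi}_j$ is a bi-Lipschitz homeomorphism between open subsets of $\mathbb{R}^n$, hence a.e.\ differentiable by Rademacher, and Lemma \ref{composition} gives the chain rule $D(\varphi_i^{-1} \circ \bar{\varphi}_j) = (D\varphi_i)^{-1} \circ D\bar{\varphi}_j$ almost everywhere. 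Because both $\varphi_i$ and $\bar{\varphi}_j$ have been normalized to preserve the same topological orientation, both factors on the right have positive determinant a.e., yielding positivity of the transition Jacobian.

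The main obstacle is the analytic step of the second paragraph: identifying the topological orientation (defined through $H_n$) with the almost-everywhere sign of the Jacobian of a bi-Lipschitz chart whose target is not a priori a smooth manifold but only the topological manifold part of an Alexandrov space. This requires invoking the geometric-measure-theoretic fact that bi-Lipschitz maps of connected open sets in $\mathbb{R}^n$ have constant-sign Jacobian a.e., together with degree theory to match this sign against the homological local orientation, and then pulling this identification back through the bi-Lipschitz identification of $\varphi_i(A_i)$ with an open set in $\mathbb{R}^n$ provided by Theorem \ref{bilip2}. Once this identification is in place, the rest of the argument is a clean bookkeeping exercise in composition of Jacobians.
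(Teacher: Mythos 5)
Your argument is essentially correct, but it takes a genuinely different route from the paper. The paper never argues abstractly via degree theory: it works with the specific distance-coordinate charts $\phi_a(y)=(\dist(a_1,y),\dots,\dist(a_n,y))$ attached to $(n,\delta)$-strainers, computes the differential of a transition map explicitly as the matrix $\left[\cos\sphericalangle(\Uparrow_y^{a_i},\Uparrow_y^{c_j})\right]_{i,j}$ of cosines of angles between strainer directions (Lemma \ref{lemma3.9}), and uses the near-orthogonality of strainers (Lemma \ref{CF1.4}) to force this determinant into $[1-\tau(\delta),1+\tau(\delta)]$, hence positive; the topological orientation enters only to fix the overall sign, by interchanging $a_1$ with $b_1$ when necessary (Part (3) of Lemma \ref{lemma3.9}, then Lemma \ref{strained-orientable-grad}). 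You instead normalize an arbitrary bi-Lipschitz atlas homologically and invoke the fact that a sense-preserving bi-Lipschitz homeomorphism between connected open subsets of $\mathbb{R}^n$ has positive Jacobian almost everywhere (constancy of the local degree, plus coincidence of the degree with the sign of the Jacobian at points of differentiability with nonsingular derivative). Your route is more flexible, since it applies to any bi-Lipschitz atlas and not just strainer coordinates, but it imports degree-theoretic machinery external to the paper, whereas the paper's computation is quantitative ($\det M$ is $\tau(\delta)$-close to $1$) and stays entirely inside the strainer/comparison toolkit that the rest of the paper uses anyway.

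Two technical points in your write-up need repair. First, the sets $A_i$ produced in Theorem \ref{atlas} are only Borel (carved as differences out of open sets), so ``passing to connected components'' is not available as stated: such a set can be totally disconnected with positive measure, its components may be uncountably many and individually null, and $\varphi_i(A_i)$ need not be open, let alone a disk. The constant-sign/degree argument should be run on the connected open neighborhoods $U_{x_i}$ and the homeomorphisms $f_{x_i}$ of Theorem \ref{bilip2} from which the $A_i$ were cut, and only afterwards restricted to the Borel pieces; positivity of the transition Jacobians survives restriction. Second, ``$\det(\nabla\bar\varphi_j)>0$'' is not literally meaningful, since the target of $\bar\varphi_j$ is the metric space $X$; rather than factoring through Lemma \ref{composition}, it is cleaner to note that once both charts are sense-preserving with respect to the fixed topological orientation, the transition map $\varphi_i^{-1}\circ\bar\varphi_j$ is itself a sense-preserving bi-Lipschitz homeomorphism between open subsets of $\mathbb{R}^n$, and to apply your degree fact to it directly. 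With these adjustments your proof goes through.
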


Before we begin the proof of Theorem \ref{orientation-main}, we need Lemma 1.4 from \cite{PW}, which shows us that a strainer is very close to being orthogonal. 

\begin{lemma}\cite{PW}
\label{CF1.4} 
\label{angle convergence}Let $B\subset X$ be $( l,\delta) $%
--strained by $\left\{ \left( a_{i},b_{i}\right) \right\} _{i=1}^{l}.$ For
any $x\in B$ and $i\neq j,$ 
\begin{equation*}
\begin{array}{ll}
\pi -\delta <\sphericalangle \left( a_{i},x,b_{i}\right) \leq \pi , & \frac{%
\pi }{2}-\delta <\sphericalangle \left( a_{i},x,b_{j}\right) <\frac{\pi }{2}%
+2\delta,\text{ and} \\ 
\frac{\pi }{2}-\delta <\sphericalangle \left( b_{i},x,b_{j}\right) <\frac{%
\pi }{2}+2\delta, & \frac{\pi }{2}-\delta
<\sphericalangle \left( a_{i},x,a_{j}\right) <\frac{\pi }{2}+2\delta.%
\end{array}%
\end{equation*}
Moreover, the same result holds for the comparison angles.
\end{lemma}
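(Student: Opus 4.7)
The plan is to derive both the lower and upper bounds on each of the four angle quantities from two standard ingredients of Alexandrov geometry: Toponogov comparison, which gives $\sphericalangle \geq \tilde{\sphericalangle}$ and so transfers the strainer hypotheses from comparison to actual angles, and the $2\pi$ perimeter inequality at a common vertex, which turns lower bounds on two of the three angles of a triple at $x$ into an upper bound on the third. The same bookkeeping, applied separately to comparison angles in $X$ (via the Alexandrov $4$-point condition at $x$) and to actual angles in $\Sigma_{x}$ (via the perimeter-$\leq 2\pi$ inequality inside the curvature $\geq 1$ space of directions), will simultaneously establish the statement for both $\sphericalangle$ and $\tilde{\sphericalangle}$, so the ``moreover'' clause comes for free.

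For the first inequality, the strainer definition provides $\tilde{\sphericalangle}(a_{i},x,b_{i}) > \pi - \delta$ directly, and since every (comparison or actual) angle lies in $[0,\pi]$, we immediately get $\pi-\delta < \tilde{\sphericalangle}(a_{i},x,b_{i}) \leq \pi$. Applying Toponogov then yields $\sphericalangle(a_{i},x,b_{i}) \geq \tilde{\sphericalangle}(a_{i},x,b_{i}) > \pi - \delta$, while the upper bound $\sphericalangle(a_{i},x,b_{i}) \leq \pi$ is automatic.

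For the three $\pi/2$-inequalities, the lower bounds come straight from the strainer hypothesis for comparison angles, and Toponogov promotes each of them to the corresponding $\sphericalangle$. For the upper bounds I would invoke the $4$-point comparison-angle inequality at $x$ applied, for instance, to the triple $(a_{i}, b_{i}, b_{j})$:
\begin{equation*}
\tilde{\sphericalangle}(a_{i},x,b_{i}) + \tilde{\sphericalangle}(b_{i},x,b_{j}) + \tilde{\sphericalangle}(a_{i},x,b_{j}) \leq 2\pi.
\end{equation*}
Substituting $\tilde{\sphericalangle}(a_{i},x,b_{i}) > \pi - \delta$ and $\tilde{\sphericalangle}(b_{i},x,b_{j}) > \pi/2-\delta$ forces $\tilde{\sphericalangle}(a_{i},x,b_{j}) < \pi/2 + 2\delta$. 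Symmetric triples, e.g.\ $(a_{i}, b_{i}, a_{j})$ together with $\tilde{\sphericalangle}(a_{i},x,b_{i}) > \pi-\delta$ and $\tilde{\sphericalangle}(a_{j},x,b_{i}) > \pi/2-\delta$, yield $\tilde{\sphericalangle}(a_{i},x,a_{j}) < \pi/2 + 2\delta$, and an analogous choice bounds $\tilde{\sphericalangle}(b_{i},x,b_{j})$. Repeating the argument verbatim inside $\Sigma_{x}$, with the already-established actual-angle lower bounds and the perimeter-$\leq 2\pi$ inequality among any three directions, upgrades each upper bound from comparison to actual angles.

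The main technical point is simply to invoke the correct form of the perimeter inequality in each track: the comparison-angle estimates live in $X$ itself and rely on the $4$-point condition at $x$, while the actual-angle estimates live in $\Sigma_{x}$ and use that $\Sigma_{x}$ is a compact Alexandrov space of curvature $\geq 1$ and diameter $\leq \pi$. Both are standard consequences of the curvature bound \cite{BGP}, and once the two parallel tracks are set up the rest of the proof is purely arithmetic; no further geometric input is required.
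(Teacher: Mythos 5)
Your proof is correct. Note that the paper itself does not prove this lemma at all: it is quoted verbatim from Pro--Wilhelm \cite{PW} (their Lemma 1.4), so there is no in-paper argument to compare against. Your two-track argument is the standard one and is sound: the lower bounds for comparison angles are the strainer hypotheses themselves, the angle-comparison inequality $\sphericalangle \geq \tilde{\sphericalangle}$ promotes them to actual angles, and the upper bounds follow from the quadruple condition $\tilde{\sphericalangle}(p,x,q)+\tilde{\sphericalangle}(q,x,r)+\tilde{\sphericalangle}(r,x,p)\leq 2\pi$ in $X$, respectively from the perimeter bound $\leq 2\pi$ for triples of directions in $\Sigma_x$ (curvature $\geq 1$), applied to the triples $(a_i,b_i,b_j)$, $(a_i,b_i,a_j)$ and $(b_i,a_i,b_j)$; your arithmetic $2\pi-(\pi-\delta)-(\pi/2-\delta)=\pi/2+2\delta$ matches the stated bounds. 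Two small points worth making explicit if you write this up: the actual angles $\sphericalangle(\cdot,x,\cdot)$ require geodesics from $x$ to the strainer points, which exist because the spaces in question are complete and locally compact (hence proper), and the bounds hold for any choice of such geodesics since both the comparison hypotheses and the perimeter inequality in $\Sigma_x$ are choice-independent.
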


The following lemma allows us to create the required charts for the proof of
Theorem \ref{orientation-main}.

\begin{lemma}
\label{lemma3.9} For $x \in R_{X(n, \delta)}$ and $\epsilon>0$, let $\{(a_{i},\,b_{i})%
\}_{i=1}^{n}$ and $\{(c_{i},\,d_{i})\}_{i=1}^{n}$ be $( n,\delta) $--strainers for a neighborhood $U_x\subset X$ of $x$. Let 
\begin{equation*}
\phi _{a},\phi _{c}:U_{p}\rightarrow \mathbb{R}^{n},
\end{equation*}%
be given, as in Theorem \ref{atlas} by%
\begin{equation*}
\phi _{a}(y)=(\func{dist}(a_{1},y),\cdots ,\func{dist}(a_{n},y))\text{ and }%
\phi _{c}(y)=(\func{dist}(c_{1},y),\cdots ,\func{dist}(c_{n},y)),
\end{equation*}%
where $y\in U_x$ and the bi-Lipschitz constants of $\phi_a$ and $\phi_c$ are bounded between $1-\epsilon$ and $1+\epsilon$.
Then

\begin{enumerate}
\item The transition functions 
\begin{equation*}
\phi _{a}\circ \phi _{c}^{-1}:\phi _{c}\left( U_{x}\right) \rightarrow \phi
_{a}\left( U_{x}\right)
\end{equation*}%
\begin{equation*}
\phi _{c}\circ \phi _{a}^{-1}:\phi _{a}\left( U_{x}\right) \rightarrow \phi
_{c}\left( U_{x}\right)
\end{equation*}
are differentiable almost everywhere.
\item If $v\in \Sigma_x$ is a direction where both $\phi _{a}$ and $\phi_c$ are 
differentiable and orientation preserving as in Part ($2$), then the change of basis matrix, $M$, of $d\left( \phi _{a}\circ \phi
_{c}^{-1}\right) $ with respect to the standard basis satisfies%
\begin{equation*}
1-\tau(\delta)\leq \det(M)\leq 1+\tau(\delta),
\end{equation*}
where $\tau(\delta)$ is a function satisfying $\lim_{\delta\rightarrow 0} \tau(\delta)=0$.

\item If $R_{X(n,\delta )}$ is oriented, then after possibly interchanging $%
a_{1}$ with $b_{1}$, $\phi _{a}$ is orientation preserving.
\end{enumerate}
\end{lemma}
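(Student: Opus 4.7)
The plan is to prove the three parts in sequence, leveraging the bi-Lipschitz theory of distance coordinates from Theorem \ref{bilip2} together with the directional derivative formula (\ref{dist}) for distance functions.

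For Part (1), by Theorem \ref{bilip2} the maps $\phi_a, \phi_c: U_x \to \R^n$ are bi-Lipschitz onto their images with constants close to $1$. Hence $\phi_a \circ \phi_c^{-1}: \phi_c(U_x) \to \phi_a(U_x)$ is a bi-Lipschitz map between open subsets of $\R^n$, and Rademacher's theorem provides differentiability almost everywhere; the argument for $\phi_c \circ \phi_a^{-1}$ is symmetric.

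For Part (2), I would compute the Jacobian of $\phi_a \circ \phi_c^{-1}$ as the composition $d\phi_a \circ (d\phi_c)^{-1}$ via Lytchak's chain rule (Lemma \ref{composition}), applied at a point $y$ where both maps admit all relevant directional derivatives. By (\ref{dist}), the differential $d_y\phi_a$ sends $v \in \Sigma_y$ to the vector $\bigl(-\langle v, \Uparrow_y^{a_i}\rangle\bigr)_{i=1}^n \in \R^n$. Lemma \ref{CF1.4} combined with the strainer inequalities forces the pairwise angles between the directions $\uparrow_y^{a_i}$ and $\uparrow_y^{a_j}$ to lie within $\tau(\delta)$ of $\pi/2$, so $\{\uparrow_y^{a_i}\}_{i=1}^n$ forms a $\tau(\delta)$-almost orthonormal frame in $\Sigma_y$. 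Fixing an orthonormal basis of the tangent cone at $y$ (which by Theorem \ref{almostisometry} is close to Euclidean at a strained point), the matrix of $d_y\phi_a$ is $\tau(\delta)$-close to a signed orthogonal matrix, and likewise for $d_y\phi_c$. Continuity of the determinant as a polynomial in the matrix entries then yields $|\det M| \in [1 - \tau(\delta), 1 + \tau(\delta)]$; combined with the orientation-preserving hypothesis, this gives the two-sided bound $1 - \tau(\delta) \leq \det M \leq 1 + \tau(\delta)$.

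For Part (3), observe that swapping $a_1$ with $b_1$ replaces the first coordinate of $\phi_a$, namely $\dist(a_1, \cdot)$, with $\dist(b_1, \cdot)$. The strainer condition $\tilde\sphericalangle(a_1, x, b_1) > \pi - \delta$, combined with Theorem \ref{almostisometry}, forces $\uparrow_y^{b_1}$ to be $\tau(\delta)$-close to the antipode of $\uparrow_y^{a_1}$ in $\Sigma_y$. Hence the swap reverses the sign of the first row of the matrix of $d_y\phi_a$ and therefore flips $\det d_y\phi_a$. Given a topological orientation on $R_{X(n,\delta)}$, exactly one of the two choices (swap or no swap) yields a positive determinant, making $\phi_a$ orientation preserving in the sense compatible with that orientation.

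The hard part will be executing Part (2) carefully: fixing a workable orthonormal basis for the (a priori non-smooth) tangent cone $T_y X$ with respect to which the matrix of $d_y\phi_a$ is manifestly near-orthogonal, verifying that Lytchak's chain rule applies on a full-measure subset where all relevant differentials exist simultaneously, and quantitatively controlling the error function $\tau(\delta)$. Once this is set up, Parts (1) and (3) follow as essentially formal consequences of the bi-Lipschitz and strainer calculus.
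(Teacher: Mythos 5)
Your proposal follows essentially the same route as the paper's proof: bi-Lipschitz regularity plus Rademacher for Part (1); the directional-derivative formula for distance functions together with the near-orthogonality of strainer directions (Lemma \ref{CF1.4}) to identify the transition differential with a matrix of cosines close to an orthogonal matrix of determinant one for Part (2); and the near-antipodality of $\Uparrow_y^{b_1}$ and $\Uparrow_y^{a_1}$ forcing the swap $a_1\leftrightarrow b_1$ to flip the sign of the Jacobian determinant for Part (3). The execution details you flag (choice of almost-orthonormal frames and the a.e.\ validity of the chain rule) are handled in the paper exactly along the lines you sketch.
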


\begin{proof}
Note first that by Theorem 5.4 \cite{BGP} (cf. Theorem 10.8.18 of \cite{BBI}) both $\phi
_{a}$ and $\phi _{c}$ are bi-Lipshitz and hence differentiable almost
everywhere and we obtain Part ($1$). 

To prove Part ($2$), recall from Definition 11.3 in \cite{BGP} that the component functions $\left( \phi _{a}\right) _{i}$
and $\left( \phi _{c}\right) _{i}$ of $\phi _{a}$ and $\phi _{c}$ are
directionally differentiable almost everywhere. By definition of the derivative of a distance function (see Display \ref{dist}), it follows that  the $i$th components of the directional derivatives of $\phi_a$ and $\phi_c$ are given almost everywhere by 
\begin{equation*}
D_{v}\left( \phi _{a}\right) _{i}=-\cos (\sphericalangle (\Uparrow
_{x}^{a_{i}},v))\text{ and }D_{v}\left( \phi _{c}\right) _{i}=-\cos
(\sphericalangle (\Uparrow _{x}^{c_{i}},v)),
\end{equation*}
(cf. Example 11.4 in \cite{BGP}).

Further, it follows from the definition of the directional derivative that for almost every $y\in U_x$ we have
\begin{equation*}
(d\phi _{a})_{y}\left( \Uparrow
_{y}^{a_{i}}\right) = -e_{i}.
\end{equation*}
Note then that 
$$ d( \phi _{a}\circ \phi _{c}^{-1})_{\phi
_{c}( y)}(e_i)=\left[\cos \sphericalangle (\Uparrow _{y}^{a_{i}},\Uparrow
_{y}^{c_{j}})\right]_{i,j}(e_i),$$
and hence $$M=\left[\cos \sphericalangle (\Uparrow _{y}^{a_{i}},\Uparrow
_{y}^{c_{j}})\right]_{i,j}.$$

However, by the last inequality of Lemma \ref{CF1.4}, we have for every $y\in U_{x}$ and for $i\neq j$ 
\begin{equation*}
\left\vert \sphericalangle \left( \Uparrow _{y}^{a_{i}},\Uparrow
_{y}^{a_{j}}\right) -\frac{\pi }{2}\right\vert <2\delta \, \, \textrm{ and } \, \, \left\vert \sphericalangle \left( \Uparrow _{y}^{c_{i}},\Uparrow
_{y}^{c_{j}}\right) -\frac{\pi }{2}\right\vert <2\delta.
\end{equation*} 
Thus, we may choose oriented, orthonormal bases, $\{v_i\}_{i=1}^n$,  and  $\{w_i\}_{i=1}^n$, for $\mathbb{R}^n \supseteq \Sigma_y$,    such that
$$|\sphericalangle(\Uparrow_y)^{a_{i}}, v_i)| < 2\delta \,\,  \mbox{ and } \,\, |\sphericalangle (\Uparrow_{y}^{c_{i}}, w_i) |< 2\delta,$$
noting that the vectors $v_i$ and $w_j$  need not all be elements of $\Sigma_y$. The  change of basis matrix from $\{v_i\}_{i=1}^n$ to $\{w_i\}_{i=1}^n$, with respect to the standard basis in $\mathbb{R}^n$,  is given by $\left[\cos \sphericalangle (v_i,\, w_j) \right]_{i,j}$ and has determinant $1$.

Applying the triangle inequality twice, we have
$|\sphericalangle( \Uparrow_{y}^{a_{i}}, \Uparrow_{y}^{c_{j}}) - \sphericalangle (v_i,\, w_j) |< 4\delta$. 
Hence $$1-\tau(\delta)\leq \det(M)\leq 1+\tau(\delta)$$ and Part ($2$) is proven.

We now prove Part ($3$). Note by Lemma \ref{CF1.4} , it follows that 
\begin{equation}\label{e1}
\pi-2\delta-\sphericalangle(\Uparrow_y^{a_1}, \Uparrow_y^{a_j})\leq\sphericalangle(\Uparrow_y^{b_1}, \Uparrow_y^{a_j})\leq \pi +4\delta -\sphericalangle(\Uparrow_y^{a_1}, \Uparrow_y^{a_j}).
\end{equation}

Since we have a consistent top class at every point, we have a global orientation on $X$.  If the orientation induced by the map $\phi _{a}$ coincides with the global orientation,  we do not change the map $\phi _{a}$. If it is not, then by interchanging $a_1$ with $b_1$, it follows immediately by Inequality \ref{e1} that the newly defined $\phi_a$ will be orientation preserving.
\end{proof}

\begin{lemma}
\label{strained-orientable-grad} Let $X$ be an Alexandrov space. Given $%
\delta >0$ as in Theorem \ref{almostisometry}, if $X$ is oriented in the
topological sense, then on $R_{X(n,\delta )}$ there exists charts given by
strainers $\{(U_{i},\,\phi _{i})\}$ such that at almost every point of $%
U_{i}\cap U_{j},$ $$\det (\nabla (\phi _{i}\circ \phi _{j}^{-1}))>0.$$
That is, there exists an oriented atlas on $X$ satisfying Definition \ref{Federer}.
\end{lemma}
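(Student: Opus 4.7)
The plan is to manufacture the desired atlas by gluing together the local strainer-based charts provided by Theorem \ref{bilip} (with Theorem \ref{atlas} guiding the countability argument) and to use the three parts of Lemma \ref{lemma3.9} to compare overlapping charts. First, for each point $x\in R_{X(n,\delta)}$ I would choose an $(n,\delta)$-strainer $\{(a_i,b_i)\}_{i=1}^n$ defined on an open neighborhood $U_x\subset R_{X(n,\delta)}$ (shrinking $U_x$ if necessary so that every point of $U_x$ is strained by the same tuple, which is possible since strainers strain whole open sets by Definition \ref{strainer}). This gives the distance-coordinate chart $\phi_x=(\dist(a_1,\cdot),\ldots,\dist(a_n,\cdot)):U_x\to \mathbb{R}^n$, which is bi-Lipschitz by Theorem \ref{bilip2}.

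Next, using the hypothesis that $X$ is oriented in the topological sense (transferred to $R_{X(n,\delta)}$ by Lemma \ref{strained-orientable}), I would apply Part (3) of Lemma \ref{lemma3.9} at the base point $x$: after possibly swapping $a_1$ with $b_1$, the chart $\phi_x$ is orientation preserving with respect to the global topological orientation. Since the topological orientation is locally constant on the open set $U_x$, this swap makes $\phi_x$ orientation preserving on all of $U_x$ (not just at $x$). By separability of $R_{X(n,\delta)}$, established just before Theorem \ref{atlas}, I can then extract from the family $\{(U_x,\phi_x)\}_{x\in R_{X(n,\delta)}}$ a countable subcover $\{(U_i,\phi_i)\}_{i\in I}$ which covers $R_{X(n,\delta)}$.

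Finally I turn to overlaps. On $U_i\cap U_j$, both $\phi_i$ and $\phi_j$ are orientation preserving in the topological sense. By Part (1) of Lemma \ref{lemma3.9} the transition $\phi_i\circ\phi_j^{-1}$ is differentiable almost everywhere, and by Part (2) its Jacobian satisfies
\[
1-\tau(\delta)\;\le\;\det\bigl(\nabla(\phi_i\circ\phi_j^{-1})\bigr)\;\le\;1+\tau(\delta)
\]
almost everywhere on $\phi_j(U_i\cap U_j)$, where $\tau(\delta)\to 0$ as $\delta\to 0$. Fixing $\delta$ small enough (consistent with the $\delta$ of Theorem \ref{almostisometry} already assumed), the right-hand side is strictly positive, and hence $\det(\nabla(\phi_i\circ\phi_j^{-1}))>0$ a.e. on every overlap. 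This is exactly the oriented-atlas condition of Definition \ref{Federer}. As in the proof of Theorem \ref{atlas}, one may further restrict the charts to Borel subsets to obtain the atlas structure required for a countably $\mathcal{H}^n$ rectifiable space.

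The main obstacle is verifying that Part (3) of Lemma \ref{lemma3.9}, which is stated pointwise, gives a globally consistent choice of swap on each neighborhood $U_x$; this hinges on the fact that a topological orientation is locally constant and that the sign of $\det(\nabla\phi_x)$ depends continuously on the base point (as the strainer directions vary continuously along $U_x$), so a single swap of $a_1$ and $b_1$ fixes the orientation on the whole of $U_x$. A secondary subtlety is that our strainer charts are not smooth, so the differential must be understood in the directional/Lipschitz sense of Display (\ref{dist}) and Lemma \ref{composition}; this is precisely what Lemma \ref{lemma3.9} is designed to handle.
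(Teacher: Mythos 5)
Your proposal is correct and follows essentially the same route as the paper: build distance-coordinate charts from $(n,\delta)$-strainers, use Part (3) of Lemma \ref{lemma3.9} to make each chart orientation preserving, and then use Parts (1) and (2) to get $\det(\nabla(\phi_i\circ\phi_j^{-1}))>0$ almost everywhere on overlaps; your added care about extracting a countable subfamily via separability and about the pointwise-to-local consistency of the $a_1\leftrightarrow b_1$ swap is a reasonable elaboration of what the paper leaves implicit. One trivial slip: what must be strictly positive for small $\delta$ is the lower bound $1-\tau(\delta)$, not the right-hand side $1+\tau(\delta)$.
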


\begin{proof}
For each $p\in R_{X(n, \delta)}$, there exists $U_p$ and $\{(a_{i},\,b_{i})\}_{i=1}^{n}$,  an $%
( n,\delta) $--strainer for $U_p$. By Part ($3$) of Lemma \ref{lemma3.9}, we may choose each $\phi_p$ to be orientation preserving.
Then $\left\{ \left( U_{p},\phi _{p}\right) \right\} _{p\in R_{X(n,\delta)}} $ is an orientation preserving atlas for $X$. 

Let $U_p \, \cap \, U_q \neq
\emptyset$ for  $p,\,q \in R_{X(n,\delta )}$, where $\{(a_{i},\,b_{i})\}_{i=1}^{n}$ and $\{(c_{i},\,d_{i})\}_{i=1}^{n}$ are the corresponding $(n,
\delta)-$strainers, respectively, for $U_p$ and $U_q$.
By Part ($2$) of Lemma \ref{lemma3.9}, It follows that 
 $$\det (\nabla (\phi _{p}\circ \phi _{q}^{-1}))>0,$$
wherever it is defined. Since $p$ and $q$ were arbitrarily chosen, the result follows.
\end{proof}


\section{The proof of Theorem A}\label{sec-t:1}

In this section, we assume that $(X,d)$ is an orientable, $n$-dimensional closed Alexandrov space with curvature bounded below by $\kappa$.
We will construct a $n$-dimensional integral current structure $T$ of weight $1$ on 
$(X,d)$ in two steps.  First, we define a $n$-dimensional integer rectifiable current structure $T$ on $(X,d)$.  Second, we prove that $(X,d,T)$ is indeed an $n$-integral current space, by showing that $\partial T=0$ and that $\set(T)=X$.  See Theorem \ref{equalcurr}
and Corollary \ref{setT=X}.


\subsection{Construction of an integer rectifiable current on ${\bf (X,d)}$}\label{ssec-t:1 step 1}

In this subsection, we complete the first step of the proof of Theorem A by proving Theorem \ref{Xinteger} below.
That is, we will construct a $n$-dimensional integer rectifiable current $T$ on $X$. 
To do so,  we utilize the existence of charts for points that are $(n, \epsilon)$-strained as described in Section\,\ref{sec-background}.

 \begin{theorem}\label{Xinteger} 
Let $(X, d)$ be a $n$-dimensional oriented Alexandrov space with curvature bounded below by $\kappa$ and diameter bounded above by $D$. Then there exists a $n$-dimensional integer rectifiable current  space $(X, d, T)$ with weight $1$, that is, $\theta_T=1$.  Moreover, $T$ is unique in the sense of Lemma \ref{unique}. 
 \end{theorem}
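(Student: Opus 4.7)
The plan is to use the oriented atlas constructed in Lemma \ref{strained-orientable-grad} to build $T$ as a sum of pushforwards of Lebesgue measure on Euclidean charts, all weighted uniformly by $1$. First, I would invoke Theorem \ref{atlas} together with Lemma \ref{strained-orientable-grad} (applied for some fixed, sufficiently small $\delta>0$) to obtain a preferred oriented atlas $\{(A_i,\varphi_i)\}_{i\in\N}$ on $R_{X(n,\delta)}$, where each $\varphi_i:A_i\subset\R^n\to X$ is bi-Lipschitz with constants uniformly bounded by $1\pm\epsilon$, the $A_i$ are precompact Borel sets in $\R^n$, and the images $\varphi_i(A_i)$ are pairwise disjoint with union equal to $R_{X(n,\delta)}$. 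Because $X\setminus R_{X(n,\delta)}$ has Hausdorff codimension at least $2$ by Lemma \ref{l:singcodim2}, this atlas covers $\mathcal{H}^n$-almost every point of $X$.

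Next, I would set $\theta_i\equiv 1$ on $A_i$ and define the candidate current by the formal sum
\[
T:=\sum_{i=1}^{\infty}\varphi_{i\#}\Lbrack 1_{A_i}\Rbrack.
\]
The key technical step is to show that this sum converges to an $n$-dimensional current on $X$. For any test tuple $(f,\pi)\in\mathcal{D}^n(X)$, unwinding the pushforward via Definition \ref{defn-push} and using the bi-Lipschitz bounds gives
\[
|\varphi_{i\#}\Lbrack 1_{A_i}\Rbrack(f,\pi)|\le \|f\|_\infty\prod_{j=1}^n\Lip(\pi_j)\cdot (1+\epsilon)^n\mathcal{L}^n(A_i),
\]
so partial mass totals are controlled by $\sum_i\mathbf{M}(\varphi_{i\#}\Lbrack 1_{A_i}\Rbrack)\le (1+\epsilon)^n\mathcal{H}^n(X)$, which is finite by the Bishop inequality (Theorem \ref{bishop Ineq}) applied to the compact space $X$. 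Absolute convergence then defines $T$ as a multilinear functional on $\mathcal{D}^n(X)$, and the three axioms in Definition \ref{defn-current} transfer from the summands: the locality axiom because each summand vanishes whenever some $\pi_j$ is locally constant on the support of $f$, the pointwise continuity axiom by dominated convergence, and the existence of a dominating measure via the finite Borel measure $\mu:=\sum_i(\varphi_i)_{\#}((1+\epsilon)^n\mathcal{L}^n\rstr A_i)$.

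To finish, I would verify the remaining conditions of Definition \ref{def-integercur}. Since the images $\varphi_i(A_i)$ are pairwise disjoint, the mass measures of the individual summands have disjoint supports, so there is no cancellation and
\[
\mathbf{M}(T)=\sum_{i=1}^\infty\mathbf{M}(\varphi_{i\#}\Lbrack 1_{A_i}\Rbrack),
\]
meaning $(\{\varphi_i\},\{\theta_i\})$ is a legitimate current parametrization and $T\in\intrectcurr_n(X)$. Applying Equation \eqref{e:1} to this parametrization gives $\theta_T=\sum_i \One_{\varphi_i(A_i)}=\One_{R_{X(n,\delta)}}$, which equals $1$ $\mathcal{H}^n$-almost everywhere by Lemma \ref{l:singcodim2}. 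For uniqueness, any other weight-$1$ integer rectifiable current $T'$ on $X$ carrying the same orientation admits a parametrization compatible with ours by Lemma \ref{strained-orientable-grad} (positive Jacobian determinants on overlaps), and its weight agrees with $\theta_T$ almost everywhere; Lemma \ref{lem-equalcurr} then forces $T=T'$. The chief obstacle lies in the second step, namely promoting the formal sum to an honest current, where the Bishop inequality is essential to bound the total mass uniformly and furnish the dominating Borel measure required by axiom (3) of Definition \ref{defn-current}.
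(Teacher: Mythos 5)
Your proposal is correct and follows essentially the same route as the paper: defining $T=\sum_i\varphi_{i\#}\Lbrack 1_{A_i}\Rbrack$ over the disjoint bi-Lipschitz atlas of Theorem \ref{atlas}, bounding the total mass via volume comparison to verify the axioms of Definition \ref{defn-current}, obtaining mass additivity from disjointness of the images, and deducing uniqueness from Lemma \ref{lem-equalcurr}. The only differences are cosmetic (e.g.\ citing Lemma \ref{l:singcodim2} rather than Theorem \ref{Hausdim} for the null complement, and slightly different constants in the mass estimate), so no changes are needed.
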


The proof of Theorem \ref{Xinteger} follows once we have proven the following four lemmas.

\begin{lemma}\label{lem-metricF}
 Let $(X, d)$ be a $n$-dimensional oriented Alexandrov space with curvature bounded below by $\kappa$. Then $T:\mathcal{D}^n(X) \to \R$ given by 
$$T = \sum_{i=1}^\infty \varphi_{i\#}\Lbrack \theta_i\Rbrack = \sum_{i=1}^\infty \varphi_{i\#}\Lbrack A_i\Rbrack$$
is a  multi-linear functional, where $\varphi_i: A_i \to X$ are bi-Lipchitz maps with mutually disjoint images and $A_i \subset \R^n$ are Borel sets, as constructed in Theorem \ref{atlas}.
\end{lemma}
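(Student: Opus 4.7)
The plan is to establish two things in turn: first that the series defining $T$ converges in $\R$ for every test tuple $(f,\pi)\in\mathcal D^n(X)$, and then, once well-definedness is in hand, derive multilinearity from multilinearity of the individual summands.

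For convergence I would first unpack each summand. Since $A_i\subset\R^n$ is precompact Borel, the constant integer function $1_{A_i}\in L^1(A_i,\N)$ gives an $n$-dimensional integer rectifiable current $\Lbrack A_i\Rbrack$ on $\R^n$, and by the Ambrosio--Kirchheim pushforward result quoted after Definition \ref{defn-push} each $\varphi_{i\#}\Lbrack A_i\Rbrack$ is a well-defined current on $X$. Applying the mass bound from Definition \ref{defn-current}(3) and the formula for the pushforward, I would estimate, for any $(f,\pi)\in\mathcal D^n(X)$,
\[
 \bigl|\varphi_{i\#}\Lbrack A_i\Rbrack(f,\pi)\bigr|
 \le \|f\|_\infty \prod_{j=1}^n \Lip(\pi_j\circ\varphi_i)\,\mathcal L^n(A_i)
 \le \|f\|_\infty (1+\epsilon)^n\!\prod_{j=1}^n \Lip(\pi_j)\,\mathcal L^n(A_i),
\]
using that the $\varphi_i$ have Lipschitz constant at most $1+\epsilon$ by Theorem \ref{atlas}.

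Next I would sum these bounds. Because each $\varphi_i^{-1}$ is also Lipschitz with constant at most $(1-\epsilon)^{-1}$, one has $\mathcal L^n(A_i)\le (1-\epsilon)^{-n}\mathcal H^n(\varphi_i(A_i))$. The images $\varphi_i(A_i)$ are pairwise disjoint and lie in $X$, so
\[
 \sum_{i=1}^\infty \mathcal L^n(A_i)
 \le (1-\epsilon)^{-n}\sum_{i=1}^\infty \mathcal H^n(\varphi_i(A_i))
 \le (1-\epsilon)^{-n}\mathcal H^n(X).
\]
Here the main input from Alexandrov geometry enters: since $X$ has curvature $\ge\kappa$ and, in the ambient setting of Theorem \ref{Xinteger}, diameter bounded by $D$, the Bishop inequality (Theorem \ref{bishop Ineq}) gives $\mathcal H^n(X)\le V_\kappa(D)<\infty$. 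Combined with the pointwise bound above, this shows the series $\sum_i \varphi_{i\#}\Lbrack A_i\Rbrack(f,\pi)$ converges absolutely, with a bound depending only on $\|f\|_\infty$, the $\Lip(\pi_j)$, and universal constants; so $T(f,\pi)\in\R$ is well-defined.

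Once well-definedness is set, multilinearity is essentially automatic: each pushforward $\varphi_{i\#}\Lbrack A_i\Rbrack$ is multilinear in the entries of $(f,\pi_1,\dots,\pi_n)$ because $\Lbrack A_i\Rbrack$ is multilinear on $\mathcal D^n(\R^n)$ and pre/post-composition with fixed Lipschitz maps preserves multilinearity. Since the partial sums are multilinear and converge to $T$ for every argument, the limit inherits multilinearity by a standard linearity-of-limits argument. The step I expect to be the genuine obstacle is the absolute-summability estimate, because it relies on simultaneously controlling the bi-Lipschitz constants of the atlas (from Theorem \ref{atlas}, which requires the strained-point construction) and the finiteness of $\mathcal H^n(X)$ via Bishop; any relaxation in either of these inputs would break the uniform bound on $\sum_i \mathcal L^n(A_i)$ that makes the series converge.
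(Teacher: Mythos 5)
Your proposal is correct and follows essentially the same route as the paper: bound each summand $\varphi_{i\#}\Lbrack A_i\Rbrack(f,\pi)$ by $\|f\|_\infty\prod_j\Lip(\pi_j)$ times a constant multiple of $\mathcal{H}^n(\varphi_i(A_i))$ using the uniform bi-Lipschitz constants from Theorem \ref{atlas}, sum over the disjoint images to reduce to $\mathcal{H}^n(X)$, which is finite by Bishop(--Gromov) with the curvature and diameter bounds, and deduce multilinearity from that of the summands. The only cosmetic difference is that you estimate the Euclidean integrals directly (via the Hadamard-type determinant bound) where the paper invokes the mass axiom for each pushforward current, which amounts to the same estimate.
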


\begin{proof}
 By Theorem \ref{atlas}, there exist a countable collection of bi-Lipschitz maps $\varphi_i: A_i \to X$ with $A_i \subset \R^n$ 
 precompact Borel measurable subsets. 
 
On each $A_i$, define the weight function $\theta_i: A_i \to \N$ by $\theta_i = 1_{A_i}$, where $1_{A_i}$ denotes the indicator function on $A_i$.  Set $\Lbrack A_i \Rbrack =  \Lbrack 1_{A_i} \Rbrack$ as in Example 2.17 of \cite{SW}, that is $\Lbrack A_i \Rbrack: \mathcal{D}^n(\R^n) \to \R$ 
$$\Lbrack A_i \Rbrack (f, \pi_1, \dots, \pi_n)= \int_{A_i \subset \R^n}  f \det\left(\nabla \pi_i\right) \, d\mathcal{L}^n.$$

Then define $T$ as follows:
$$T = \sum_{i=1}^\infty \varphi_{i\#}\Lbrack \theta_i\Rbrack = \sum_{i=1}^\infty \varphi_{i\#}\Lbrack A_i\Rbrack.$$

The first step consists in proving $T(f,\pi)\in\R$ for $(f,\pi)\in\mathcal{D}^n(X)$, that is, $T(f,\pi)$ is finite.\\
Note that for each $j\in\N$ we know that $T_j:=\varphi_{j\#}\Lbrack A_j\Rbrack: \mathcal{D}^n(\varphi_j(A_j))\rightarrow\R$ is a current. 
Thus, by ($3$) in Definition\,\ref{defn-current}, there exists a finite Borel measure $\mu_j$ such that 
\begin{align*}
\lvert T_j(f,\pi)\lvert\leq \prod_{i=1}^n\mbox{Lip}(\pi_i)\int_{\varphi_j(A_j)}\lvert f\lvert d\mu_j.
\end{align*}
The preceding inequality in particular holds true for the mass measure $\|T_j\|$ of $T_j$.\\
By the triangle inequality we thus get
\begin{align*}
\notag\lvert T(f,\pi)\lvert\leq\sum_{j=1}^{\infty}\lvert T_j(f,\pi)\lvert&\leq\sum_{j=1}^{\infty}(\prod_{i=1}^n\mbox{Lip}(\pi_i))\int_{\varphi_j(A_j)}\lvert f\lvert d\|T_j\|\\
\end{align*}
Hence
\begin{align}
\label{1}
\lvert T(f,\pi)\lvert\leq (\prod_{i=1}^n\mbox{Lip}(\pi_i))\|f\|_{\infty}\sum_{j=1}^{\infty}\|T_j\|({\varphi_j(A_j)}).
\end{align}

\smallskip

We now proceed by proving that the right hand side of the above inequality is finite.
Note
\begin{align}
\label{2}
\|\varphi_{j\#}\Lbrack \theta_j \Rbrack \| \leq \Lip(\varphi_j)^n \varphi_{j\#} \|\Lbrack 1_j \Rbrack \| 
 = \Lip(\varphi_j)^n \varphi_{j\#} (\mathcal{L}^n)
 =\Lip(\varphi_j)^n \varphi_{j\#} (\mathcal{H}^n),
 \end{align}                                                             
and
$$\varphi_{j\#} (\mathcal{H}^n) (\varphi_j(A_j)) =  \mathcal{H}^n (A_j) .$$
Since $\varphi_j$ is bi-Lipschitz, we have
$$\mathcal{H}^n (A_j) \leq \Lip( \varphi_j^{-1})^n \mathcal{H}^n ( \varphi_j (A_j)) .$$
Consequently, by Equation\,(\ref{2}) we have
$$\|\varphi_{j\#}\Lbrack \theta_j \Rbrack \|  (\varphi_j(A_j))  
\leq 
\Lip(\varphi_j)^n \Lip( \varphi_j^{-1})^n \mathcal{H}^n ( \varphi_j (A_j)).$$
Finally, we get
\begin{align*}
\sum_{j=1}^{\infty}\|\varphi_{j\#}\Lbrack \theta_j \Rbrack \|  ( \varphi_j (A_j)) &\leq\sum_{j=1}^{\infty} \Lip(\varphi_j)^n \Lip( \varphi_j^{-1})^n \mathcal{H}^n ( \varphi_j (A_j)) \\
&\leq c(n)\sum_{j=1}^{\infty}    \mathcal{H}^n ( \varphi_j(A_j)) \\
&= c(n) \mathcal{H}^n (\cup  \varphi_j (A_j)) \\
&\leq  c(n)  \mathcal{H}^n (X)\\
&\leq C(n,\kappa, D),
\end{align*}
where $c(n)=\Lip(\varphi_j)^n \Lip( \varphi_j^{-1})^n$ is a constant depending only on $n$ by Theorem \ref{atlas} and $C(n,\kappa, D)$ is  a constant depending only on the dimension $n$, the curvature bound $\kappa$ and the diameter bound $D$ given by Bishop Volume Comparison Theorem \ref{Grombishop Ineq}. Combining the preceding inequality with (\ref{1}) we get that $T(f,\pi)$ is finite.

\smallskip

Finally, the multilinearity of $T$ follows from the fact that each summand, $\varphi_{i\#}\Lbrack A_i\Rbrack$, is multilinear. 
\end{proof}

We now prove that $T$ is indeed a current.

\begin{lemma}\label{lem-Tiscurrent}
The multilinear functional $T$ defined as in Lemma \ref{lem-metricF} is a current.
\end{lemma}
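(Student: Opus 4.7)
The plan is to verify the three defining properties of a current from Definition \ref{defn-current} for $T = \sum_{j=1}^\infty T_j$, where $T_j := \varphi_{j\#}\Lbrack A_j \Rbrack$. Each summand $T_j$ is already a current: $\Lbrack A_j \Rbrack \in \curr_n(\R^n)$ since $A_j$ is a precompact Borel set with $\mathcal{H}^n(A_j) < \infty$, and pushforwards of currents under Lipschitz maps are currents by the result of Ambrosio--Kirchheim cited after Definition \ref{defn-push}. Moreover, Lemma \ref{lem-metricF} already provides the key uniform estimate
\[
\sum_{j=1}^\infty \|T_j\|(\varphi_j(A_j)) \leq C(n,\kappa,D) < \infty,
\]
which makes the series defining $T(f,\pi)$ absolutely convergent.

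Property (1) is easy: if some $\pi_i$ is constant on a neighborhood of $\{f\neq 0\}$, then for each $j$ the tuple $(f\circ\varphi_j, \pi_1\circ\varphi_j, \ldots, \pi_n\circ\varphi_j)$ still has $\pi_i\circ\varphi_j$ constant on a neighborhood of $\{f\circ\varphi_j \neq 0\}$, so $T_j(f,\pi)=\Lbrack A_j\Rbrack(f\circ\varphi_j, \pi\circ\varphi_j) = 0$ by property (1) for $\Lbrack A_j \Rbrack$. Summing gives $T(f,\pi)=0$.

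For property (3), I will define $\mu := \sum_{j=1}^\infty \|T_j\|$ as a Borel measure on $X$. By the estimate above this is a finite Borel measure with $\mu(X) \leq C(n,\kappa,D)$. Then for any $(f,\pi)\in \mathcal{D}^n(X)$, using property (3) for each $T_j$ and monotone convergence,
\[
|T(f,\pi)| \leq \sum_{j=1}^\infty |T_j(f,\pi)| \leq \prod_{i=1}^n \Lip(\pi_i) \sum_{j=1}^\infty \int_X |f|\, d\|T_j\| = \prod_{i=1}^n \Lip(\pi_i) \int_X |f|\, d\mu,
\]
which gives (3).

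The main obstacle is property (2), continuity in the $\pi_i$ under pointwise convergence with $\Lip(\pi_i)\leq 1$. For this I would fix $\pi^{(k)}=(\pi_1^{(k)},\ldots,\pi_n^{(k)}) \to \pi$ pointwise with $\Lip(\pi_i^{(k)})\leq 1$. For each fixed $j$, since $T_j$ is a current we have $T_j(f,\pi^{(k)}) \to T_j(f,\pi)$ as $k\to \infty$. To pass the limit through the infinite sum I will invoke dominated convergence for counting measure on $\N$: property (3) for each $T_j$ gives the uniform bound
\[
|T_j(f,\pi^{(k)})| \leq \|f\|_\infty \, \|T_j\|(\varphi_j(A_j)),
\]
valid since $\Lip(\pi_i^{(k)})\leq 1$, and $\sum_j \|f\|_\infty\, \|T_j\|(\varphi_j(A_j))$ is finite by the uniform estimate. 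Hence I may interchange sum and limit to conclude $T(f,\pi^{(k)})\to T(f,\pi)$. This completes the verification that $T\in \curr_n(X)$.
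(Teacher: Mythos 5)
Your proposal is correct and follows essentially the same route as the paper: verify properties (1)--(3) of Definition \ref{defn-current} for $T=\sum_j \varphi_{j\#}\Lbrack A_j\Rbrack$, using the summability bound $\sum_j\|T_j\|(\varphi_j(A_j))\le C(n,\kappa,D)$ from Lemma \ref{lem-metricF}, defining $\mu=\sum_j\|T_j\|$ (the paper uses $\sum_j\mu_j$) for property (3), and interchanging the sum and the limit termwise for property (2). Your explicit dominated-convergence justification with the uniform bound $\|f\|_\infty\|T_j\|(\varphi_j(A_j))$ is in fact a slightly more careful version of the paper's appeal to absolute convergence, but it is the same argument in substance.
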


\begin{proof}
We must verify that Definition \ref{defn-current} is satisfied. 
Property (1) in Definition\,\ref{defn-current} follows easily.
Indeed, if there exists a neighborhood on which $\pi_i$ is constant, then since $\varphi_{i\#}\Lbrack A_i\Rbrack$ is a current we have $\varphi_{i\#}\Lbrack A_i\Rbrack (f,\pi)=0$ for all $i\in\N$. Hence, $T(f,\pi)=0$.

\smallskip

Next we show property (2) in Definition\,\ref{defn-current}.
We have to prove that $\lim_{i\rightarrow\infty}T(f,\pi_1^i,\cdots,\pi_n^i)=T(f,\pi_1,\cdots,\pi_n)$, whenever $\pi_{k}^i$ converges pointwise to $\pi_k$ in $X$, where $\mbox{Lip}(\pi_j^i)\leq C$ for some constant $C$.
By the above considerations we know that $\sum_{j=1}^{\infty}\lvert T_j(f,\pi_1^i,\cdots,\pi_n^i)\lvert<\infty$, consequently the sum $\sum_{j=1}^{\infty} T_j(f,\pi_1^i,\cdots,\pi_n^i)$ converges absolutely.
Since $T_j$ is a current, for each $j\in\N$ we have $\lim_{i\rightarrow\infty}T_j(f,\pi_1^i,\cdots,\pi_n^i)=T_j(f,\pi_1,\cdots,\pi_n)$. We thus can commute the infinite sum and the limit and hence obtain the claimed equality
\begin{align*} 
 \lim_{i\rightarrow\infty}T(f,\pi_1^i,\cdots,\pi_n^i)&=\lim_{i\rightarrow\infty}\sum_{j=1}^{\infty}T_j(f,\pi_1^i,\cdots,\pi_n^i)\\&=\sum_{j=1}^{\infty}\lim_{i\rightarrow\infty}T_j(f,\pi_1^i,\cdots,\pi_n^i)
 \\&=\sum_{j=1}^{\infty}T_j(f,\pi_1,\cdots,\pi_n)=T(f,\pi_1,\cdots,\pi_n).
 \end{align*}

\smallskip

Finally, we show property (3) in Definition\,\ref{defn-current}.
By the above considerations we have 
\begin{align*}
\lvert T(f,\pi)\lvert\leq \sum_{j=1}^{\infty}\lvert T_j(f,\pi)\lvert\leq \prod_{i=1}^n\mbox{Lip}(\pi_i)\int_{\varphi_j(A_j)}\lvert f\lvert d\mu_j.
\end{align*}
We define $\mu=\sum_{j=1}^{\infty}\mu_j$, which defines a finite Borel measure on $X$ since the sets $\varphi_j(A_j)$ are disjoint. Thus we get
\begin{align*}
\lvert T(f,\pi)\lvert\leq \sum_{j=1}^{\infty}\lvert T_j(f,\pi)\lvert\leq \prod_{i=1}^n\mbox{Lip}(\pi_i)\int_{X}\lvert f\lvert d\mu,
\end{align*}
and hence (3) is established. Thus $T$ is a current.
\end{proof}

\begin{lemma}\label{lemTInteger} 
The current $T$ as defined in Lemma \ref{lem-Tiscurrent} is an integer rectifiable current.
\end{lemma}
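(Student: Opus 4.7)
The plan is to verify directly the conditions in Definition~\ref{def-integercur} for $T$ with the parametrization $(\{\varphi_i\}, \{\theta_i\})$ from Lemma~\ref{lem-metricF}, where $\theta_i := 1_{A_i}$. Most ingredients are already in place: by Theorem~\ref{atlas} the maps $\varphi_i : A_i \to X$ form a countable family of bi-Lipschitz homeomorphisms on precompact Borel subsets $A_i \subset \R^n$ with pairwise disjoint images, and $\theta_i \in L^1(A_i,\N)$ since each $A_i$ has finite Lebesgue measure. The decomposition $T = \sum_i \varphi_{i\#}\Lbrack \theta_i\Rbrack$ is the very definition of $T$, and Lemma~\ref{lem-Tiscurrent} has already established $T \in \curr_n(X)$.

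The only genuinely remaining requirement is the mass identity
$$\mass(T) = \sum_{i=1}^\infty \mass\bigl(\varphi_{i\#}\Lbrack \theta_i\Rbrack\bigr).$$
For the inequality $\leq$, I would argue that $\mu := \sum_i \|\varphi_{i\#}\Lbrack \theta_i \Rbrack\|$ is a finite Borel measure on $X$ — finiteness being precisely the Bishop-type bound already carried out in the proof of Lemma~\ref{lem-metricF} via Theorem~\ref{Grombishop Ineq} — and $\mu$ clearly fulfills part (3) of Definition~\ref{defn-current} for $T$. Minimality of $\|T\|$ then forces $\|T\| \leq \mu$, and taking the total mass yields the desired inequality.

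The reverse inequality is the step I expect to be the main obstacle. The key observation is that each pushforward mass measure $\|\varphi_{i\#}\Lbrack \theta_i \Rbrack\|$ is concentrated on $\varphi_i(A_i)$, and by construction these Borel sets are mutually disjoint. Given a bounded Lipschitz function $f$ supported inside a single $\varphi_i(A_i)$, all summands $\varphi_{j\#}\Lbrack \theta_j\Rbrack(f,\pi)$ with $j\neq i$ vanish (their mass measures being null off $\varphi_j(A_j)$), so $T(f,\pi) = \varphi_{i\#}\Lbrack \theta_i\Rbrack(f,\pi)$. A standard approximation of indicator functions of $\varphi_i(A_i)$ by Lipschitz cutoffs then yields the measure-theoretic inequality $\|T\|(\varphi_i(A_i)) \geq \|\varphi_{i\#}\Lbrack \theta_i\Rbrack\|(\varphi_i(A_i))$. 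Summing over $i$ and noting that the complement $X \setminus \bigcup_i \varphi_i(A_i)$ is $\mathcal{H}^n$-null (by Theorem~\ref{atlas}) and hence $\|T\|$-null (since $\|T\| \leq \mu$ is absolutely continuous with respect to $\mathcal{H}^n$ on $X$ via the bi-Lipschitz structure), gives the reverse inequality. This completes the verification of Definition~\ref{def-integercur} and shows $T \in \intrectcurr_n(X)$.
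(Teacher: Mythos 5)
Your reduction is the same as the paper's: after Lemma \ref{lem-Tiscurrent}, the only thing left in Definition \ref{def-integercur} is the mass identity $\mass(T)=\sum_i\mass(T_i)$, where $T_i:=\varphi_{i\#}\Lbrack \theta_i\Rbrack$, and you actually supply more detail than the paper, which simply asserts the equality $\|T\|(\varphi_i(A_i))=\|T_i\|(\varphi_i(A_i))$ and sums over the disjoint images. Your inequality $\mass(T)\le\sum_i\mass(T_i)$, via minimality of $\|T\|$ against $\mu:=\sum_i\|T_i\|$, is correct, and it also gives $\|T\|\bigl(X\setminus\bigcup_i\varphi_i(A_i)\bigr)=0$ directly because $\mu$ is concentrated on $\bigcup_i\varphi_i(A_i)$; the detour through absolute continuity with respect to $\mathcal{H}^n$ is not needed.

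The lower bound is the one step that, as written, does not close. The sets $\varphi_i(A_i)$ from Theorem \ref{atlas} are Borel but in general not open: for $i>1$ one has $\varphi_i(A_i)=U_{x_i}\setminus\bigcup_{j<i}U_{x_j}$, whose non-interior part lies on the boundaries $\partial U_{x_j}$, and nothing in the construction forces these to be $\|T_i\|$-null. A Lipschitz $f$ with support contained in $\varphi_i(A_i)$ automatically has $\{f\neq 0\}$ inside the interior of $\varphi_i(A_i)$, so such test functions cannot detect any mass of $T_i$ sitting on $\varphi_i(A_i)\setminus\operatorname{int}\varphi_i(A_i)$; hence your two observations (vanishing of the other summands on such $f$, plus Lipschitz approximation of $1_{\varphi_i(A_i)}$) do not combine as stated. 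The standard repair uses regularity of the finite Borel measures involved: given $\epsilon>0$, choose a compact $K\subset\varphi_i(A_i)$ with $\|T_i\|(\varphi_i(A_i)\setminus K)<\epsilon$ and, since $\sum_{j\neq i}\|T_j\|(K)=0$, an open $V\supset K$ with $\sum_{j\neq i}\|T_j\|(V)<\epsilon$. For any finite family with $\sum_k|f_k|\le 1_V$ and $\Lip(\pi^k_l)\le 1$ one has $\sum_k|T(f_k,\pi^k)|\ge\sum_k|T_i(f_k,\pi^k)|-\epsilon$, so the Ambrosio--Kirchheim characterization of mass on open sets gives $\|T\|(V)\ge\|T_i\|(V)-\epsilon\ge\|T_i\|(\varphi_i(A_i))-2\epsilon$, while $\|T\|(V\setminus\varphi_i(A_i))\le\mu(V\setminus\varphi_i(A_i))<\epsilon$ because $\|T_i\|$ is concentrated on $\varphi_i(A_i)$. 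Hence $\|T\|(\varphi_i(A_i))\ge\|T_i\|(\varphi_i(A_i))-3\epsilon$, and letting $\epsilon\to 0$ and summing over $i$ yields the reverse inequality; equivalently, one may restrict $T$ to the Borel set $\varphi_i(A_i)$ and invoke mutual singularity of the $\|T_j\|$. With this adjustment your proof is complete and, in substance, coincides with (and fills in) the paper's argument.
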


\begin{proof}
We have to check that $T$ satisfies the conditions of Definition \ref{def-integercur}. By Lemma \ref{lem-Tiscurrent}, it suffices to show that $\mass(T)=\sum_{j=1}^{\infty}\mass(T_j)$. Since we have $\|T\|(\varphi_j(A_j))= \|T_j\|(\varphi_j(A_j))$ for all $j\in\N$, we get
$$\mass(T)=\|T\|(\cup\varphi_j(A_j))=\sum_{j=1}^{\infty}\|T\|(\varphi_j(A_j))=\sum_{j=1}^{\infty}\|T_j\|(\varphi_j(A_j))=\sum_{j=1}^{\infty}\mass(T_j),$$
where we used that the sets $\varphi_j(A_j)$ are disjoint.  It follows $\mass(T)=\sum_{j=1}^{\infty}\mass(T_j)$.
\end{proof}

\begin{lemma}\label{unique} 
Suppose that $T$ and $T'$ are two integer rectifiable currents defined on $(X,d)$ with current parametrizations $(\{\varphi_i\}, \{1_{A_i}\})$ and $(\{\varphi_i'\}, \{1_{A_i'}\})$
constructed using the $(n, \delta)$-strained points as described in Theorem \ref{atlas}.
Assume that these two atlases belong to the same maximal oriented atlas of $R_{X(n,\delta)}$.  Then $T=T'$.
\end{lemma}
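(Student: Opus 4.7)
The strategy is to apply Lemma \ref{lem-equalcurr}, which provides exactly the three criteria needed for two integer rectifiable currents built from parametrizations to coincide. So the plan is simply to verify each of the three hypotheses in turn, using the machinery assembled in Section \ref{sec-background} and Section \ref{3}.

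First I would verify the measure-theoretic condition on the images. Both atlases are built via Theorem \ref{atlas} from the set $R_{X(n,\delta)}$ of $(n,\delta)$-strained points, so $\bigcup_i \varphi_i(A_i)$ and $\bigcup_i \varphi'_i(A'_i)$ each differ from $R_{X(n,\delta)}$ by an $\mathcal{H}^n$-null set. By Lemma \ref{l:singcodim2}, the complement $X\setminus R_{X(n,\delta)}$ has Hausdorff codimension at least two, and in particular has $\mathcal{H}^n$-measure zero. Therefore the symmetric difference of the two unions is $\mathcal{H}^n$-null, establishing condition (1).

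Next I would check the orientation condition. This is precisely where the hypothesis that the two atlases lie in a common maximal oriented atlas enters. By Definition \ref{Federer} applied to the union of the atlases, at $\mathcal{H}^n$-a.e. point of each overlap $\varphi_i(A_i)\cap \varphi'_j(A'_j)$ we have
\[
\det\bigl(\nabla(\varphi_i^{-1}\circ\varphi'_j)\bigr)>0 \quad\text{and}\quad \det\bigl(\nabla(\varphi'^{\,-1}_j\circ\varphi_i)\bigr)>0,
\]
which is condition (2). Finally, the weight functions $\theta_i=1_{A_i}$ and $\theta'_i=1_{A'_i}$ are identically $1$ on their domains, so by formula \eqref{e:1} both weight functions $\theta_T$ and $\theta_{T'}$ equal $1$ on $R_{X(n,\delta)}$ (up to an $\mathcal{H}^n$-null set), giving condition (3).

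With all three hypotheses of Lemma \ref{lem-equalcurr} verified, the conclusion $T=T'$ follows directly. There is no real obstacle here; the only subtlety worth being careful about is making sure the almost-everywhere statements are applied on the correct set, namely $R_{X(n,\delta)}$, and invoking Lemma \ref{l:singcodim2} rather than the weaker Corollary \ref{burst} to ensure the complement is negligible for $\mathcal{H}^n$.
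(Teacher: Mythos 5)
Your proposal is correct and follows essentially the same route as the paper: verify the three conditions of Lemma \ref{lem-equalcurr}, using that both image unions differ from $R_{X(n,\delta)}$ only by an $\mathcal{H}^n$-null set, that the common maximal oriented atlas gives the positive Jacobian condition, and that all weights are $1$. The only cosmetic difference is that the paper bounds the symmetric difference via $S_X$ and Theorem \ref{Hausdim} (equivalently Corollary \ref{burst}), which already suffices since Hausdorff dimension at most $n-1$ gives $\mathcal{H}^n$-measure zero, so the codimension-two Lemma \ref{l:singcodim2} is not actually needed.
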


\begin{proof}
We apply Lemma \ref{lem-equalcurr} to prove that $T$ equals $T'$.

\smallskip

Since $R_{X(n, \delta)}\subset \cup \varphi_i(A_i) \subset X$ and $R_{X(n, \delta)}\subset \cup \varphi_i'(A_i')\subset X$, it follows that the symmetric difference between $\cup \varphi_i(A_i)$ and $\cup\varphi_i'(A_i')$ is actually contained in $S_X$. Therefore, by Theorem \ref{Hausdim}, the symmetric difference has zero $n$-dimensional Hausdorff measure.

\smallskip

Furthermore, since $\{(A_i, \varphi_i)\}$ and $\{   (A'_i, \varphi'_i)\}$ are contained in the same maximal oriented atlas, it follows that 
\begin{align*}
\det(\nabla(\varphi^{-1}_i \circ \varphi'_j)) > 0 \text{   and    } 
\det(\nabla(\varphi'^{-1}_i \circ \varphi_j)) > 0.
\end{align*}
Finally, 
$$\theta_T(x)= \sum_{i=1}^\infty \theta_i\circ\varphi_i^{-1}(x)\One_{\varphi_i\left(A_i\right)}(x)
=1$$
and 
$$\theta_{T'}(x)= \sum_{i=1}^\infty \theta_i'\circ\varphi_i'^{-1}(x)\One_{\varphi_i'\left(A_i'\right)}(x)=1$$
for all $x \in R_X(n, \delta)$.  Thus by Lemma \ref{lem-equalcurr}, $T= T'$. 
\end{proof}


\subsection{${\bf (X,d,T)}$ is an integral current space.}
In this subsection we prove that $(X,d,T)$ is a $n$-dimensional integral current space. In order to do this, we first prove that $\partial T=0$. We accomplish this by showing that the integral current $T'$ defined on $X$, for which $\partial T'=0$ as in Mitsuishi \cite{M2} is, in fact, equal to the integer rectifiable current we define in Theorem \ref{equalcurr} below. We then prove that  $\set(T)=X$ in Corollary \ref{setT=X}. With these two steps, we have completed the proof of Theorem A.
 

\begin{theorem}\label{equalcurr}
Let $(X,d)$ be an $n$-dimensional closed Alexandrov space, $T$ the $n$-current defined in Theorem \ref{Xinteger} and $T'$ the $n$-current that generates the group $\{ S \in \intcurr_n\left(X\right) | \, \bdry S=0\} = \mathbb Z$ from Theorem \ref{thm-Mitsuishi}. Then, either $T=T'$ or $T=-T'$. Hence, $\bdry T=0$.
\end{theorem}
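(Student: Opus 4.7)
The strategy is to apply Lemma \ref{lem-equalcurr} to show that $T = T'$ or $T = -T'$, from which $\partial T = \pm \partial T' = 0$ follows immediately. Since $T' \in \intcurr_n(X) \subset \intrectcurr_n(X)$, it admits a current parametrization $(\{\psi_j\}, \{\theta'_j\})$ by bi-Lipschitz charts with precompact Borel domains and pairwise disjoint images. Using $\hm^n(S_X) = 0$ (Theorem \ref{Hausdim}) together with Corollary \ref{burst} and Lemma \ref{l:singcodim2}, we may refine these charts so that all images lie in $R_{X(n,\delta)}$. The symmetric difference between the union of images for $T$ and the union of images for $T'$ is then contained in $X\setminus R_{X(n,\delta)}\subset S_X$, hence has $\hm^n$-measure zero, verifying one of the three hypotheses of Lemma \ref{lem-equalcurr}.

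Next I would determine $\theta_{T'}$ on the regular set using Federer's Constancy Theorem. Fix $x_0 \in R_{X(n,\delta)}$ and, by Theorem \ref{bilip2}, a bi-Lipschitz chart $\varphi: V \to U$ with $x_0 \in U \subset R_{X(n,\delta)}$ and $V \subset \R^n$ open. Restricting $T'$ to the pieces of its parametrization with images in $U$ and pushing forward through $\varphi^{-1}$ produces an integer rectifiable $n$-current $S$ on $V$. Since pushforward commutes with $\bdry$ and $\bdry T' = 0$, one has $\bdry S(\omega) = 0$ for every test form $\omega$ compactly supported in $V$. Federer's Constancy Theorem then forces $S$ to be a constant integer multiple of Lebesgue measure on $V$, so $\theta_{T'}$ is locally constant on $R_{X(n,\delta)}$ and the orientation of $T'$ on $U$ equals the chart orientation times a fixed sign. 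By Lemma \ref{pconn} the set $R_{X(n,\delta)}$ is path-connected, so these local constants and signs glue to a single integer $k$; after replacing $T'$ by $-T'$ if necessary, Lemma \ref{strained-orientable-grad} ensures the orientation of $T'$ agrees with that of our atlas on every overlap.

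The main obstacle is to show $k = 1$. For this I would unpack Mitsuishi's construction of the generator: the isomorphism $H_n^{\textrm{sing}}(X; \Z) \cong H_n(\intcurr_\bullet^{\textrm{c}}(X); \Z)$ of Theorem \ref{thm-Mitsuishi} is induced by the chain map $[\,\cdot\,]: C_n^{\textrm{Lip}}(X) \to \intcurr_n^{\textrm{c}}(X)$, so the generator $T'$ is represented by the image under $[\,\cdot\,]$ of a fundamental Lipschitz cycle $\sum_i \sigma_i \in C_n^{\textrm{Lip}}(X)$. A fundamental cycle represents the topological orientation class of $X$, so its signed multiplicity at every top-stratum interior point is $\pm 1$; the corresponding integer rectifiable current $\sum_i \sigma_{i\#}[\Delta^n]$ therefore has weight $1$ almost everywhere on $R_{X(n,\delta)}$, forcing $k = 1$. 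Combining this with $\theta_T = 1$ and the matching orientations, all three conditions of Lemma \ref{lem-equalcurr} are verified, so $T = T'$ (or $T = -T'$, depending on the sign chosen in the previous paragraph), and hence $\bdry T = 0$.
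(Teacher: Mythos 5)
Your proposal is correct in outline and shares the paper's skeleton: both arguments reduce to Lemma \ref{lem-equalcurr} and verify the same three conditions, with the symmetric-difference condition disposed of by $\mathcal{H}^n(S_X)=0$ (Theorem \ref{Hausdim}) exactly as in the paper; the paper works directly with Mitsuishi's simplicial representation $T'=\sum_{i=1}^N f_{i\#}\Lbrack \Delta^n\Rbrack$ rather than a generic Ambrosio--Kirchheim parametrization, but that difference is immaterial. Where you genuinely diverge is in the orientation and weight conditions. The paper handles orientation through Lemmas \ref{strained-orientable} and \ref{strained-orientable-grad}, and obtains $\theta_{T'}=1$ almost everywhere by asserting that each $f_i$ restricted to the open simplex is injective, so that $\theta_{T'}=1$ off the Lipschitz images of $\partial\Delta^n$. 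You instead localize in a bi-Lipschitz chart, observe that the boundary of the localized current vanishes on test tuples supported in the chart because $\bdry T'=0$, invoke the classical constancy theorem to conclude the multiplicity is a locally constant integer with a locally constant sign, glue these over the path-connected set $R_{X(n,\delta)}$ (Lemma \ref{pconn}), and then pin the constant to $\pm 1$ by identifying it with the local degree of a fundamental Lipschitz cycle representing Mitsuishi's generator. Your route costs two extra ingredients that should be spelled out (transporting the metric current to a classical current on an open subset of $\R^n$ so that the constancy theorem applies, and the identification of the a.e.\ multiplicity with the local homology class at top-stratum points), but it buys independence from the injectivity assertion, which is the tersest and most delicate step of the paper's own proof, so your argument is if anything more robust. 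One small repair: rather than restricting $T'$ to ``the pieces of its parametrization with images in $U$'', restrict the current itself, i.e.\ take $T'\rstr\One_U$, before pushing forward by $\varphi^{-1}$, so that no mass carried by parametrization charts that merely meet $U$ is discarded.
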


\begin{proof}
We will prove the theorem using Lemma \ref{lem-equalcurr}.
Recall, that by definition we have
$$T = \sum_{i=1}^\infty \varphi_{i\#}\Lbrack A_i\Rbrack\quad\mbox{and}\quad T'= \sum_{i=1}^N f_{i\#}\Lbrack \Delta^n\Rbrack.$$
We first prove Condition $1$ of Lemma \ref{lem-equalcurr}.
By the definitions of $T$ and $T'$, $R_{X(n, \delta)} \subset \bigcup_{i=1}^\infty \varphi_i(A_i)$ and $X=\bigcup_{i=1}^N f_i(\Delta^n)$, respectively. Then we have
\begin{equation*}
\begin{array}{ll}
\cup_{i=1}^N f_i(\Delta^n)  \,  \mathbf{\Delta}  \cup_{i=1}^\infty \varphi_i(A_i) &=  \cup_{i=1}^N f_i(\Delta^n) \setminus \cup_{i=1}^\infty \varphi_i(A_i)  \bigcup \cup_{i=1}^\infty \varphi_i(A_i)\setminus  \cup_{i=1}^N f_i(\Delta^n) \\
& \subset  X \setminus R_{X(n, \delta)}.\\
\end{array}
\end{equation*}
By Theorem \ref{Hausdim},  $\mathcal H^n (X \setminus R_{X(n, \delta)})=0$. Hence,  $$\mathcal H^n (\cup_{i=1}^N f_i(\Delta^n)  \,  \mathbf{\Delta}  \cup_{i=1}^\infty \varphi_i(A_i))=0,$$
which establishes the first item of Lemma \ref{lem-equalcurr}.

Next we prove Condition $2$ of Lemma \ref{lem-equalcurr}. We must show that $T$ and $T'$ have the same orientation, that is, 
in all overlapping sets we have
\begin{equation*}
\det(\nabla(\varphi^{-1}_i \circ f_j)) > 0 \text{   and    } 
\det(\nabla(f^{-1}_i \circ \varphi_j)) > 0.
\end{equation*}
The orientation on $T'$ is given by a generator of $H_n(X; \mathbb{Z})$ and 
by Lemma \ref{strained-orientable} this provides us with an orientation on $R_{X(n, \delta)}=\cup_{i=1}^\infty \phi_i(A_i) \cap \sum_{i=1}^N f_{i\#}\Lbrack \Delta^n\Rbrack$. It then follows by Lemma \ref{strained-orientable-grad} that it defines an orientation
on both $(X, d, T)$ and $(X, d, T')$ as desired. 

Finally,  we prove Condition $3$ of Lemma \ref{lem-equalcurr}. By definition of $T$, $\theta_T=1$ on  $\cup_{i=1}^\infty \varphi_i(A_i)$ which has full measure in $X$. Now, since $\{ S \in \intcurr_n\left(X\right) | \bdry S=0\} = H_n (X;\mathbb{Z})$,   the restriction $f_i|_{\overset{\circ}{\Delta^n}} $is injective. Then, $\theta_{T'}=1$ on $\bigcup_{i=1}^N f_i(\overset{\circ}{\Delta^n})= X \setminus \bigcup_{i=1}^N f_i(\partial \Delta^n)$.  Since $\partial \Delta^n$ has $(n-1)$- Hausdorff dimension, then its $n$-Hausdorff measure equals zero. Since the $f_i$ are Lipschitz maps, then $\mathcal H^n ( \bigcup_{i=1}^N f_i(\partial \Delta^n))=0$. Hence, $\theta_{T'}$ equals 1 almost everywhere in X.  Hence, $\theta_T=\theta_{T'}$ almost everywhere. 

Thus,  $T=T'$, and the result follows. 
\end{proof}

\smallskip

In the next lemma we show that the regular points of $X$ are contained in $\set(T)$. 
The subsequent corollary shows $\set(T)=X$.

\begin{lemma}\label{lem-regInSet} Let $(X, d)$ be a $n$-dimensional Alexandrov space with curvature bounded below by $\kappa$. Let  $T$ be an integer current structure on $X$ defined as in Proposition \ref{Xinteger}. Let $p\in R_{X(n, \delta)}$, then the following hold.
\begin{enumerate}
\item The point $p$ is contained in $\set(T)$. That is, the $n$-dimensional lower density of $\|T\|$ at $p$ is positive.
\item The density of $\|T\|$ at a regular point $p$ is equal to $1$.
\end{enumerate}
\end{lemma}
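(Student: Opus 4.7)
The plan is to prove both parts by sandwiching $\|T\|(B_r(p))$ between multiples of $\omega_n r^n$ for small $r$, using the atlas structure of $T$ from Theorem \ref{Xinteger} together with the bi-Lipschitz chart at $p$ from Theorem \ref{bilip2}. Fix $\epsilon>0$ and choose an atlas $\{(A_i,\varphi_i)\}$ from Theorem \ref{atlas} with $\Lip(\varphi_i),\Lip(\varphi_i^{-1})\le 1+\epsilon$, so that the summands $\varphi_{i\#}\Lbrack A_i\Rbrack$ have pairwise disjoint supports and $\|T\|=\sum_i\|\varphi_{i\#}\Lbrack A_i\Rbrack\|$. The standard Lipschitz pushforward mass estimate yields $\|\varphi_{i\#}\Lbrack A_i\Rbrack\|(E)\le(1+\epsilon)^{2n}\mathcal{H}^n(\varphi_i(A_i)\cap E)$; the dual lower bound $\|\varphi_{i\#}\Lbrack A_i\Rbrack\|(E)\ge(1+\epsilon)^{-2n}\mathcal{H}^n(\varphi_i(A_i)\cap E)$ follows by applying the same estimate to the Lipschitz inverse $\varphi_i^{-1}$, using that $\mathcal{L}^n(F)=\mass((\varphi_i^{-1})_\#\varphi_{i\#}\Lbrack F\Rbrack)\le\Lip(\varphi_i^{-1})^n\mass(\varphi_{i\#}\Lbrack F\Rbrack)$. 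Summing over $i$ and invoking Corollary \ref{burst} (so that $\mathcal{H}^n(X\setminus R_{X(n,\delta)})=0$) gives
\begin{equation*}
(1+\epsilon)^{-2n}\,\mathcal{H}^n(B_r(p))\ \le\ \|T\|(B_r(p))\ \le\ (1+\epsilon)^{2n}\,\mathcal{H}^n(B_r(p)).
\end{equation*}

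For Part $(1)$, I control $\mathcal{H}^n(B_r(p))$ itself via the bi-Lipschitz chart $f:U\to\mathbb{R}^n$ at $p$ furnished by Theorem \ref{bilip2}. For $r$ small enough that $B_r(p)\subset U$ and $B_{r/(1+\epsilon)}(f(p))\subset f(U)$, the same pushforward-type inequalities produce $(1+\epsilon)^{-2n}\omega_n r^n\le\mathcal{H}^n(B_r(p))\le(1+\epsilon)^{2n}\omega_n r^n$. Chaining with the previous display,
\begin{equation*}
(1+\epsilon)^{-4n}\,\omega_n r^n\ \le\ \|T\|(B_r(p))\ \le\ (1+\epsilon)^{4n}\,\omega_n r^n,
\end{equation*}
whence $\Theta_{*n}(\|T\|,p)\ge(1+\epsilon)^{-4n}>0$, placing $p$ in $\set(T)$.

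For Part $(2)$, a regular point $p\in R_X$ lies in $R_{X(n,\delta)}$ for every $\delta>0$ by Theorem \ref{regulardense}, so Theorem \ref{bilip2} allows both the chart at $p$ and the surrounding atlas to be taken with Lipschitz constants as close to $1$ as desired. A direct application of Lemma \ref{lem-equalcurr}, whose three hypotheses are satisfied for any two atlases constructed via Theorem \ref{atlas} (the symmetric difference of images lies in the singular set of vanishing $\mathcal{H}^n$-measure by Theorem \ref{Hausdim}; orientations are consistent by the construction of Section \ref{3}; and the weight is identically $1$ in each case), shows that the resulting $T$ is independent of the choice of atlas. Consequently the sandwich above holds for \emph{every} $\epsilon>0$, and letting $\epsilon\to 0$ forces $\lim_{r\to 0}\|T\|(B_r(p))/(\omega_n r^n)=1$. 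The main technical hurdle will be the careful tracking of constants in the two-sided bi-Lipschitz mass comparison, together with the justification that shrinking $\epsilon$ in Part $(2)$ genuinely leaves $T$ unchanged.
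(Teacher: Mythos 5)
Your argument is correct, but it is organized differently from the paper's proof. The paper works entirely with one chart at $p$: it takes the bi-Lipschitz strainer map $f\colon U_p\to\R^n$ (Theorem \ref{bilip} for Part (1), with constant near $1$ via Theorem \ref{bilip2} at a regular point for Part (2)), pushes the fixed current $T$ forward by $f$, and uses the Ambrosio--Kirchheim inequality $\varphi_\#\|S\|\geq\Lip(\varphi)^{-n}\|\varphi_\#S\|$ together with the identity $\|f_\#T\|=\mathcal{L}^n$ on $f(U_p)$ to sandwich $\|T\|(B_r(p))$ between $\omega_n\Lip(f)^{-2n}r^n$ and $\omega_nr^n$; density $1$ at a regular point then follows simply because $\Lip(f)$ can be taken arbitrarily close to $1$, with the atlas and the current never touched. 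You instead first prove a global two-sided comparison $\|T\|\approx_{(1+\epsilon)^{2n}}\mathcal{H}^n$ by summing the bi-Lipschitz mass estimates over the disjoint charts of the atlas (in effect re-deriving a quantitative version of Lemma \ref{lemma-weight} for this particular $T$), and then compare $\mathcal{H}^n(B_r(p))$ with $\omega_nr^n$ through the local chart. That buys a cleaner statement ($\|T\|$ is globally comparable to $\mathcal{H}^n$, not just near $p$), but it costs you an extra ingredient in Part (2): to let $\epsilon\to 0$ you must re-choose the atlas, and since Theorem \ref{bilip2} only gives constants $1\pm\epsilon$ after shrinking $\delta=\delta(\epsilon)$, you need $T$ to be independent of atlases built over \emph{different} sets $R_{X(n,\delta)}$ — a slight extension of Lemma \ref{unique} as stated (which fixes one $\delta$ and one maximal oriented atlas). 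Your direct appeal to Lemma \ref{lem-equalcurr} does cover this, since the symmetric difference of the chart images lies in $S_X$ (measure zero by Theorem \ref{Hausdim}) and all strainer charts are chosen orientation-preserving for the same topological orientation (Lemmas \ref{lemma3.9} and \ref{strained-orientable-grad}), but you should state that cross-$\delta$ compatibility explicitly; the paper's route avoids the issue entirely by improving only the single chart at $p$ rather than the whole atlas.
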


\begin{proof}
From the definition of $\set(T)$ we have to prove that 
\begin{align*}
\liminf_{r\to 0} \frac{\|T\|(B_{r} (p))}{\omega_n r^n}> 0.
\end{align*}
\smallskip

Since $p$ is a $(n,\delta)$-strained point of $X$,  Theorem \ref{bilip}  states that there is a neighborhood $U_p$ of $p$ and a bi-Lipschitz map 
$$f\colon U_p \to   W \subset  \R^n$$ 
such that 
\begin{equation}\label{LIP}
 \Lip(f)^{-1} d(x,y) \leq d(f(x),f(y)) \leq \Lip(f) d(x,y), 
 \end{equation}
for all $x,y\in  U_p$.

\smallskip

Given $\varphi: Y \to Z$ a Lipschitz map and $S$ an $n$-dimensional current defined on $Y$
it follows from Inequality (2.4) in \cite{AK} that
\begin{equation}\label{2.4}
\varphi_\# \| S \| \geq (\Lip(\varphi))^{-n}  \| \varphi_\# S \|.
\end{equation}

Let $r_0\in\R$ be such that $B_{r_0}(p)\subset U_p$.
Applying Inequality \ref{2.4}  to the function $f$ and $T$, for any $r \leq r_0$ we have 
$$ \| T \| (B_r(p)) = f_\# \| T \| (f (B_r(p))) \geq
 \Lip(f)^{-n}    \| f_\# T \| (f (B_r(p))).$$ Using Inequality \ref{LIP}, we obtain  
$$
 B_{r\Lip(f)^{-1}}(f(p)) \subset f( B_{r}(p)).$$

 Then $$  \| f_\# T \| (f (B_{r}(p)))  \geq    \| f_\# T \| (B_{r\Lip(f)^{-1}}(f(p))) = 
 \omega_n r^n \Lip(f)^{-n},$$ where the equality comes from $U_p \subset \bigcup_{i=1}^\infty \varphi_i (A_i)$ which gives $\|f_\# T\|  = \mathcal L^n$ in $f(U_p)$. Putting together the last two inequalities we obtain
\begin{equation}\label{eq-massballLow}
\| T \| (B_r(p))  \geq  \omega_n \Lip(f)^{-2n} r^n.
\end{equation}
We conclude that 
$$\liminf_{r\to 0} \frac{\|T\|(B_{r} (p))}{\omega_n r^n} \geq \Lip(f)^ {-2n} > 0.$$
Thus $p \in \set(T)$ and so Part (1) is established. 

\medskip

We now prove Part (2).
Using once again Inequality \ref{LIP}, we obtain 
$$ f( B_{r}(p))\subset B_{r\Lip(f)}(f(p)).$$
We apply (\ref{2.4}) with $\varphi=f^{-1}$ and $S=f_\# T$. Since $(f^{-1})_\# f_\# T = T$ we thus get
\begin{align}\label{eq-massballUp}
\| T \| (B_r(p))& \leq \Lip(f)^{-n} \| f_\# T \| (f(B_r(p))) \leq \Lip(f)^{-n} \| f_\# T \| (B_{r\Lip(f)} (f(p)))\\\notag& =  \Lip(f)^{-n} \omega_n r^n \Lip(f)^{n} = \omega_n r^n.
\end{align}

\smallskip
Since $p$ is a regular point,  $\Lip(f)$ can be made arbitrarily close to $1$ in Inequality (\ref{eq-massballLow}). 
Then, by Inequalities (\ref{eq-massballLow}) and (\ref{eq-massballUp}) we get,
$$\lim_{r\to 0} \frac{\|T\|(B_{r} (p))}{\omega_n r^n} = 1.$$
Hence the density of $\|T\|$ at $p$ is equal to $1$.
\end{proof}

\begin{corollary}\label{setT=X}
Let $(X,d)$ be an $n$-dimensional Alexandrov space with curvature bounded below by $\kappa$. Let $T$ be the integer current previously defined on $X$. Then $X=\set(T)$.
\end{corollary}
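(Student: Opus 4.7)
The inclusion $\set(T) \subseteq X$ is immediate since $T$ is constructed on the complete space $X$, so the content is the reverse inclusion: every $p \in X$ must have positive $n$-dimensional lower density for $\|T\|$. Part (1) of Lemma \ref{lem-regInSet} already gives $R_{X(n,\delta)} \subseteq \set(T)$, so the only work is to treat singular points $p \in X \setminus R_{X(n,\delta)}$, where no bi-Lipschitz chart from Theorem \ref{bilip2} is directly available.

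The plan is to sandwich $\|T\|(B_r(p))$ from below by $\mathcal{H}^n(B_r(p))$, and then use Bishop--Gromov. Since $T$ was constructed with weight $\theta_T \equiv 1$, Lemma \ref{lemma-weight} yields
$$\|T\| \;=\; \lambda \,\mathcal{H}^n \rstr \set(T), \qquad \lambda \geq n^{-n/2},$$
so $\|T\|(B_r(p)) \geq n^{-n/2}\,\mathcal{H}^n(B_r(p) \cap \set(T))$. By Lemma \ref{lem-regInSet}(1), $R_{X(n,\delta)} \subseteq \set(T)$, and by Corollary \ref{burst} the complement $X \setminus R_{X(n,\delta)}$ has Hausdorff dimension at most $n-1$, hence vanishing $\mathcal{H}^n$-measure. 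Consequently $\mathcal{H}^n(B_r(p) \cap \set(T)) = \mathcal{H}^n(B_r(p))$, and we obtain
$$\|T\|(B_r(p)) \;\geq\; n^{-n/2}\, \mathcal{H}^n(B_r(p)).$$

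Next, fix any $R > 0$ small enough that $B_R(p) \cap R_{X(n,\delta)} \neq \emptyset$, which is possible since $R_{X(n,\delta)}$ is open and dense in $X$. By Lemma \ref{lem-regInSet}(1) applied at any such point (together with monotonicity of $\|T\|$), $\mathcal{H}^n(B_R(p)) > 0$. The Bishop--Gromov inequality (Theorem \ref{Grombishop Ineq}) then gives
$$\frac{\mathcal{H}^n(B_r(p))}{V_\kappa(r)} \;\geq\; \frac{\mathcal{H}^n(B_R(p))}{V_\kappa(R)} \;=:\; c_p \;>\; 0 \qquad (0 < r \leq R),$$
and since $V_\kappa(r)/(\omega_n r^n) \to 1$ as $r \to 0$, assembling the bounds produces
$$\liminf_{r \to 0} \frac{\|T\|(B_r(p))}{\omega_n r^n} \;\geq\; n^{-n/2}\, c_p \;>\; 0,$$
so $p \in \set(T)$, and $X = \set(T)$ follows.

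The only step that is not purely formal is verifying $\mathcal{H}^n(B_R(p)) > 0$ at an arbitrary singular point, but this is a direct consequence of the density of $R_{X(n,\delta)}$ and the lower bound $\|T\|(B_r(q)) \geq \omega_n \Lip(f)^{-2n} r^n$ established for strained points inside the proof of Lemma \ref{lem-regInSet}. Everything else reduces to the mass-density formula of Lemma \ref{lemma-weight} and standard volume comparison, so I expect no serious obstacle.
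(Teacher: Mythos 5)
Your proposal is correct and follows essentially the same route as the paper's proof: the mass-density formula of Lemma \ref{lemma-weight} with $\theta_T\equiv 1$, the fact that the complement of the strained (resp.\ regular) points is $\mathcal{H}^n$-null so that $\mathcal{H}^n(B_r(p)\cap\set(T))=\mathcal{H}^n(B_r(p))$, and Bishop--Gromov to bound the density from below. The only difference is cosmetic: you treat all $\kappa$ at once via $V_\kappa(r)/(\omega_n r^n)\to 1$, where the paper splits into the cases $\kappa\geq 0$ and $\kappa<0$, and you make explicit the (standard) positivity of $\mathcal{H}^n(B_R(p))$ which the paper takes for granted.
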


\begin{proof}
This proof follows almost exactly the last part of the proofs given in Theorem 7.1 \cite{SW10} and Theorem 0.1 \cite{LP}, where is shown that under certain conditions the Gromov-Hausdorff limit and the intrinsic flat limit of a sequence agree.

\smallskip

By Lemma \ref{lemma-weight}, for $x \in X$ we have
\begin{equation}
\|T\|(B_r(x))= \int_{B_r(x)}\theta_T(y)\lambda(y)d\mathcal H^n\rstr \set (T),
\end{equation}
where $\theta_T: \overline{\set(T)} \to \N \cup \{0\}$ is an integrable function with
$\theta_T > 0$ in $\set(T)$ and $\lambda: \set(T) \to \R$ is a non-negative integrable function that satisfies $\lambda \geq n^{-n/2}$. Using this last inequality and $\theta_T=1$ we obtain
\begin{equation}\label{eq-massballD}
\|T\|(B_r(x)) \geq n^{-n/2} \mathcal{H}^n(B_r(x) \cap \set(T)).
\end{equation}

Furthermore, by Lemma \ref{lem-regInSet} we know $R_{X}\subset \set(T)\subset X$.
Therefore, $\mathcal H^n (\set(T) \setminus R_{X})=0$ and thus we obtain
\begin{equation}\label{eq-haussball}
\mathcal{H}^n(B_r(x) \cap \set(T)) =\mathcal{H}^n(B_r(x) \cap R_{X})=\mathcal{H}^n(B_r(x)).
\end{equation}
Then by Equation (\ref{eq-massballD}) and Equation (\ref{eq-haussball}) we get
\begin{equation}\label{eq-dens}
\frac{\|T\|(B_r(x))}{\omega_nr^n}\geq\frac{n^{-n/2} \mathcal H^n(B_r(x))}{\omega_nr^n}.
\end{equation}

\smallskip

For $\kappa \geq 0$, using the Bishop-Gromov volume comparison theorem for Alexandrov spaces, Theorem \ref{Grombishop Ineq}, we have for $R>r>0$:
$$
 \mathcal H^n(B_r(x)) \geq r^n \mathcal H^n(X) / R^n.
$$
It follows that $\liminf_{r\to 0} \|T\|(B_r(x))/\omega_nr^n  \geq  \frac{n^{-n/2}   r^n \mathcal H^n(X)}{\omega_nr^n R^n}   >0$.

\smallskip

For $\kappa < 0$, using once more the Bishop-Gromov volume comparison theorem for Alexandrov spaces, Theorem \ref{Grombishop Ineq}, with $R>r>0$, we have:
$$\mathcal{H}^n(B_r(x))\geq \frac{\mathcal H ^n(B_R(x))}{V_{\kappa}(R)} V_{\kappa}(r),$$
where $V_{\kappa}(r)$ and  $V_{\kappa}(R)$ denote respectively the volumes of the $r$-ball  and the $R$-ball in the space form $M_{\kappa}^n$. Thus, from inequality\,(\ref{eq-dens}) we get
$$\frac{\|T\|(B_r(x))}{\omega_nr^n}\geq n^{-n/2} \frac{\mathcal H ^n(B_R(x))}{V_{\kappa}(R)} \frac{V_{\kappa}(r)}{\omega_nr^n}.$$
Now, $V_{\kappa}(r)\geq V_{0}(r)= \omega_n r^n$ by Bishop's Inequality (see Theorem \ref{bishop Ineq}). It follows that $$\liminf_{r\to 0} \|T\|(B_r(x))/\omega_nr^n\geq n^{-n/2} \frac{\mathcal H ^n(B_R(x))}{V_{\kappa}(R)} > 0.$$ 
This shows that $X \subset \set(T)$.
\end{proof}

\begin{remark} The current we have constructed is canonical in the sense that if the Alexandrov space, $X$, consists only of regular points, that is, if it is a smooth manifold, then $\lambda=1$ and the current $T(\omega)= \int_X \omega$ has weight equal to $1$. Hence, $\| T\|= \mathcal H^n= \mathcal L^n =\vol_X$. 
\end{remark}

\begin{thebibliography}{999}

\bibitem{AK} Ambrosio, L., Kirchheim, B.  {\em Currents in metric spaces}, Acta\ Math. {\bf 185} (2000), 1--80.

\bibitem{BBI}  Burago, D., Burago, Y., Ivanov, S. {\em A Course in Metric Geometry}, AMS.

\bibitem{BGP} Burago, Y., Gromov, M., Perelman, G. {\em Alexandrov spaces with curvature bounded below}, Uspekhi\ Mat.\ Nauk {\bf 47/2} (1992), 3--51.

\bibitem{F} Federer, H., {\em Geometric Measure Theory}, Die Grundlehren der mathematischen Wissenschaften, Band {\bf 153}, Springer-Verlag New York Inc., New York, 1969, MR 0257325.

\bibitem{FF} Federer, H., Fleming, W. H. {\em Normal and integral currents}, Ann.\ Math. {\bf 72} (1960), 458--520.

\bibitem{G} Gromov, M. {\em Groups  of  polynomial  growth  and  expanding  maps}, Inst.\ Hautes\ Etudes\ Sci.\ Publ.\ Math.{\bf 53} (1981), 53--73.

\bibitem{GP} Grove, K., Petersen, P., {\em A radius sphere theorem}, Invent. \ Math. {\bf 112} (1993), 577- 583.
\bibitem{GH} Greenberg, M. J.,  Harper, J.H.,  {\em Algebraic topology: A first course}, Benjamin/Cummings Pub. Co, 1981.

\bibitem{HS}  Harvey J., Searle C. {\em Orientation and symmetries of Alexandrov spaces with applications in positive curvature}, arXiv:math.DG/1209.1366v3 (2012). 

\bibitem{HLS} Huang, L.-H., Lee, D., Sormani, C. {\em Intrinsic flat stability of the positive mass theorem for graphical hyper surfaces of Euclidean Space}, arXiv:1408.4319v2 (2015), to appear in Crelle.

\bibitem{J} Jauregui, J. {\em On the lower semicontinuity of the ADM mass} arXiv:1411.3699v1 (2014)


\bibitem{K1} Kapovitch, V. {\em Perelman's Stability Theorem}, Surveys in differential geometry. Vol. XI, 103-136,
Surv. Differ. Geom., {\bf 11}, Int. Press, Somerville, MA, (2007). 

\bibitem{K2} Kapovitch, V. {\em Restrictions on collapsing with a lower sectional curvature bound}, Math. Z., {\bf 249} (2005), 519--539.


\bibitem{LS1} Lee, D., Sormani, C.  {\em Stability of the Positive Mass Theorem for Rotationally Symmetric Riemannian Manifolds}, Journal f\"ur die Reine und Angewandte Mathematik (Crelle's Journal), {\bf 686} (2014).

\bibitem{LS2} Lee, D., Sormani, C.  {\em Almost Equality in the Penrose Equality for Rotationally Symmetric Riemannian Manifolds}, Annales Henri Poincare, {\bf 13}, (2012), 1537--1556. 

\bibitem{LP} Li, N., Perales, R.
 {\em On the Sormani-Wenger intrinsic flat convergence of Alexandrov spaces},  arXiv:1411.6854v2 (2014).
 
 \bibitem{Ly} Lytchak, A. {\em Differentiation in metric spaces}, Algebra i Analiz, (2004), {\bf 16},	128--161.
 
 \bibitem{MP} Matveev, R., Portegies, J. {\em Intrinsic flat and Gromov-Hausdorff convergence of manifolds with Ricci curvature bounded below}, arXiv:1510.07547 (2015), to appear in the Journal of Geometric Analysis.

\bibitem{M1} Mitsuishi, A. {\em The coincidence of the homologies of integral currents and of integral singular chains, via cosheaves}, arXiv:1304.0152, (2013).


\bibitem{M2}  Mitsuishi, A. {\em Orientability and fundamental classes of Alexandrov spaces with applications}, (2016), arXiv:1610.08024.

\bibitem{M}  Munn, M. 
 {\em Intrinsic flat convergence with bounded Ricci curvature}, arXiv:1405.3313v2 (2015).

\bibitem{OSY} Otsu, Y., Shiohama ,K., Yamaguchi, T. \emph{A new version of
differentiable sphere theorem}, Invent. Math. \textbf{98} (1989), 219--228.

\bibitem{OS} Otsu, Y., Shioya, T.  {\em The Riemannian structure of Alexandrov spaces}, J.\ Diff.\ Geom. {\bf 39} (1994), 629--658.

\bibitem{Per} Perales, R. {\em Convergence of Manifolds and Metric Spaces with Boundary}, arXiv:1505.01792, Ph.D. Thesis, State University of New York at Stony Brook, (2015).

\bibitem{P1} Perelman, G.  {\em The entropy formula for the Ricci Flow and its geometric applications},  arXiv:math.DG/0211159, 
11 Nov 2002. 

\bibitem{P2} Perelman, G.  {\em Ricci Flow with surgery on three-manifolds}, arXiv:math.DG/0303109, 10 Mar 2003. 

\bibitem{P3} Perelman, G. {\em Finite extinction time for the solutions to the Ricci flow on certain three-manifolds},  arXiv:math.DG/0307245, 17 Jul 2003. 

\bibitem{P}  Perelman, G. {\em Alexandrov spaces with curvatures bounded from below II},  preprint, 1991.

\bibitem{PP1} Perelman, G., Petrunin, A. \emph{Extremal subsets in
Aleksandrov spaces and the generalized Lieberman theorem}, (Russian) Algebra i Analiz \textbf{5} (1993), 242--256; translation in St. Petersburg  Math. J. \textbf{5} (1994), 215--227.

\bibitem{Pe}  Petersen, P. {\em Riemannian Geometry},  Graduate Texts in Mathematics, Springer, \textbf{171} (2006).

\bibitem{PWZ} Petersen, P., Wilhelm, F., Zhu S.-H.,  {\em Spaces on and beyond the boundary of existence}, J. Geometric Analysis, {\bf 5}, no. 3 (1995), 419--426.

\bibitem{Pet1}  Petrunin, A. \emph{Parallel transportation for Alexandrov spaces with curvature bounded below}, Geom. Funct. Anal. \textbf{8} (1998), 123--148.
 

\bibitem{PY} Petrunin, A. Yashinski, A., {\em From Euclid to Alexandrov; A Guided Tour}.

\bibitem{PS} Portegies, J., Sormani, C. {\em Properties of the Intrinsic Flat Distance}, arXiv:1210.3895v5 (2012), to appear in a special volume in honor of Y. Burago edited by D. Burago and S. Buyalo.

\bibitem{PSW} Pro, C., Sill, S., Wilhelm, F. {\em The Diffeomorphim type manifolds with almost maximal volume},  Communications in Analysis and Geometry.

\bibitem{PW} Pro, C., Wilhelm, F. {\em Diffeomorphism stability and codimension four}, arXiv:1606.01828 (2016).

\bibitem{S} Sormani, C.  {\em Intrinsic flat Arzela-Ascoli theorems}, arXiv:1402.6066 (2014), to appear in Communications in Analysis and Geometry .

\bibitem{SW} Sormani, C., Wenger, S. {\em The intrinsic flat distance between Riemannian manifolds and other integral current spaces}, J.\ Diff.\ Geom. {\bf 87} (2011), 117--199.

\bibitem{SW10} Sormani, C., Wenger, S. {\em Weak convergence of currents and cancellation}, J.Calculus of Variations and Partial Differential Equations. {\bf 38} (2010), 183--206.

\bibitem{Y} Yamaguchi, T, {\em Simplicial volumes of Alexandrov spaces}, Kyushu J. Math. {\bf 51} no. 2 (1997), 273--296.
\end{thebibliography}
\end{document}